\theoremstyle{plain}
\newtheorem{thm}{THEOREM}[section]
\newtheorem{conjecture}[thm]{Conjecture}
\newtheorem{corollary}[thm]{Corollary}
\newtheorem{definition}[thm]{Definition}
\newtheorem{lemma}[thm]{Lemma}
\newtheorem{proposition}[thm]{Proposition}
\newtheorem{remark}[thm]{Remark}
\newtheorem{theorem}[thm]{Theorem}
\numberwithin{equation}{section}
\newcommand{\ls}{\lesssim}
\newcommand{\lmd}{\lambda}
\newcommand{\mb}{\mathbb}
\newcommand{\be}{\begin{equation}}
\newcommand{\ee}{ \end{equation}}
\newcommand{\vn}[1]{\|#1\|}
\newcommand{\vm}[1]{\left|#1\right|}
\newcommand{\lng}{\langle}
\newcommand{\rng}{\rangle}
\newcommand{\lpp}{\left[}
\newcommand{\rpp}{\right]}
\newcommand*\jb[1]{\left\langle #1 \right\rangle} 
\newcommand{\defeq}{\vcentcolon=}
\newcommand{\ep}{\varepsilon}
\newcommand{\pt}{\partial}
\newcommand{\dd}{\,\mathrm{d}}
\begin{document}

\title[Instability of the soliton for gKdV]{Instability of the soliton for the focusing, mass-critical generalized KdV equation}

\author{Benjamin Dodson}
\address{Department of Mathematics, Johns Hopkins University, Baltimore, MD, 21218}
\email{bdodson4@jhu.edu}

\author{Cristian Gavrus}
\address{Department of Mathematics, Johns Hopkins University, Baltimore, MD, 21218}
\email{cgavrus1@jhu.edu} 

\begin{abstract}
In this paper we prove instability of the soliton for the focusing, mass-critical generalized KdV equation. We prove that the solution to the generalized KdV equation for any initial data with mass smaller than the mass of the soliton and close to the soliton in $L^{2}$ norm must eventually move away from the soliton.
\end{abstract}

\maketitle

\tableofcontents

\section{Introduction}

In this paper we prove $L^{2}$ instability of the soliton for the focusing, mass-critical, generalized KdV equation
\begin{equation}\label{1.1}
u_{t} = -(u_{xx} + u^{5})_{x}, \qquad u(0,x) = u_{0} \in L^{2}(\mathbb{R}).
\end{equation}
This equation is called mass-critical because the scaling leaving \eqref{1.1} invariant, i.e.
$$
u(t, x) \mapsto \lambda^{\frac{1}{2}} u\left(\lambda^{3} t, \lambda x\right)
$$
leaves the $L^{2}$ norm, or mass, invariant. The mass of a solution, defined by
$$
M(u(t)):=\int_{\mathbb{R}}|u(t, x)|^{2} d x
$$
is conserved.

Recently, \cite{dodson2017global} proved that the defocusing, mass-critical generalized KdV equation
\begin{equation}\label{1.2}
u_{t} = -(u_{xx} - u^{5})_{x}, \qquad u(0,x) = u_{0} \in L^{2}(\mathbb{R}),
\end{equation}
is globally well-posed and scattering for any $u_{0} \in L^{2}(\mathbb{R})$.  The proof of the defocusing result used the concentration compactness method. Namely, a result of \cite{killip2009mass} combined with a scattering result of \cite{defocusing2016global} for the defocusing nonlinear Schr{\"o}dinger equation,
\begin{equation}\label{1.3}
i u_{t} + u_{xx} = |u|^{4} u, \qquad u(0,x) = u_{0} \in L^{2}(\mathbb{R}),
\end{equation}
implies that for scattering to fail for $(\ref{1.2})$, there must exist a nonzero, almost periodic solution to $(\ref{1.2})$.

\begin{definition}[Almost periodic solution]\label{d1.1}
Suppose $u$ is a strong solution to $(\ref{1.1})$ on the maximal interval of existence $I$. Such a solution $u$ is said to be almost periodic (modulo symmetries) if there exist continuous functions $ N(t) : I \rightarrow (0, \infty)$ and $x(t) : I \rightarrow \mathbb{R}$, such that
\begin{equation}\label{1.4}
\{ v(t,x) = N(t)^{-1/2} u(t, N(t)^{-1} x + x(t)) : t \in I\}
\end{equation}
is contained in a compact subset of $L^{2}(\mathbb{R})$. See also section \ref{sec:almost:per} for an equivalent condition. 
\end{definition}

Then \cite{defocusing2016global} proved that in the defocusing case, there does not exist a nonzero, almost periodic solution to $(\ref{1.2})$, which implies scattering for the defocusing equation $(\ref{1.2})$. The proof used an interaction Morawetz estimate based upon the argument in \cite{Tao2006TwoRO}, which proved there does not exist a soliton for the defocusing, generalized KdV equation. \medskip

For the focusing generalized KdV equation, there exists the soliton $u(t,x) = Q(x - t)$, where
\begin{equation}\label{1.6}
Q(x) = \frac{3^{1/4}}{\cosh^{1/2}(2x)} > 0.
\end{equation}
The function $Q(x)$ solves the elliptic equation
\begin{equation}\label{1.7}
Q_{xx} + Q^{5} = Q,
\end{equation}
so therefore, $Q(x - t)$ solves $(\ref{1.1})$. Note that $Q(x - t)$ is an almost periodic solution to $(\ref{1.1})$. Meanwhile, for the focusing, mass-critical nonlinear Schr{\"o}dinger equation,
\begin{equation}\label{1.5}
i u_{t} + u_{xx} = -|u|^{4} u, \qquad u(0,x) = u_{0} \in L^{2}(\mathbb{R}),
\end{equation}
$u(t,x) = e^{it} Q(x)$ gives a soliton solution.\medskip

The paper \cite{dodson2016global} proved that the focusing nonlinear Schr{\"o}dinger equation $(\ref{1.5})$ is scattering for initial data below the ground state, $\| u_{0} \|_{L^{2}} < \| Q \|_{L^{2}}$. It is conjectured that the same is also true for the focusing, generalized KdV equation.
\begin{conjecture}\label{c1.2}
If $\| u_{0} \|_{L^{2}} < \| Q \|_{L^{2}}$, then the solution to $(\ref{1.1})$ is globally well-posed and scattering.
\end{conjecture}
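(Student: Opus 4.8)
The plan is to run the concentration-compactness and rigidity scheme, in parallel with the defocusing treatment of \eqref{1.2} in \cite{dodson2017global} and the focusing mass-critical Schr{\"o}dinger result \cite{dodson2016global}; since this is a substantial program, what follows is the strategy rather than the details. Suppose the conjecture fails. Then there is a smallest mass $m_0 < \|Q\|_{L^2}$ at which scattering can fail, and, combining the reduction of \cite{killip2009mass} (which ties a minimal-mass blow-up solution of \eqref{1.1} to the mass-critical Schr{\"o}dinger equation \eqref{1.5}) with the subthreshold scattering statement for \eqref{1.5} from \cite{dodson2016global}, one obtains a nonzero solution $u$ of \eqref{1.1}, almost periodic in the sense of Definition \ref{d1.1} with parameters $N(t)$ and $x(t)$, of mass exactly $m_0$, and not scattering. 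The whole problem is then to rule out such a $u$.

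The first ingredient is the coercivity that the subthreshold hypothesis buys. The conserved energy of \eqref{1.1} is $E(u) = \tfrac12 \int u_x^2 - \tfrac16 \int u^6$, and the sharp Gagliardo--Nirenberg inequality $\int f^6 \le C \|f\|_{L^2}^4 \|f_x\|_{L^2}^2$, whose extremizer is $Q$, satisfies $C \|Q\|_{L^2}^4 = 3$ (equivalently $E(Q)=0$). Hence, on $\|u\|_{L^2} = m_0$, one has $E(u) \ge \tfrac12\big(1 - (m_0/\|Q\|_{L^2})^4\big)\|u_x\|_{L^2}^2$, so once $u(t) \in H^1$ with finite energy is known, the kinetic energy $\|u_x(t)\|_{L^2}$ is bounded uniformly in $t$. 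The point of this step is to turn the focusing problem into one that, as far as monotonicity estimates are concerned, behaves like the defocusing one treated in \cite{defocusing2016global}.

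Obtaining $u(t)\in H^1$ is not free, since the reduction only produces an $L^2$ solution: I would establish additional regularity and decay for the orbit \eqref{1.4} by a double-Duhamel / negative-regularity bootstrap of Killip--Tao--Visan type, using compactness of the orbit together with the Airy Strichartz and local smoothing estimates, adapting the scheme of \cite{dodson2017global} (this is heavier than in the Schr{\"o}dinger setting because of the derivative in the nonlinearity). With this in hand, the rigidity argument splits on whether $\int_I N(t)^3 \,\mathrm{d}t$ is infinite or finite. In the first case a frequency-localized interaction Morawetz estimate --- the technical core of \cite{Tao2006TwoRO} and \cite{defocusing2016global}, now available because the uniform kinetic-energy bound absorbs the sign of the focusing term --- forces $\|u\|_{L^6_{t,x}} < \infty$, contradicting the failure of scattering. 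In the second case $u$ is soliton-like: $N(t)$ cannot run to $0$ or $\infty$ without contradicting the conservation laws and the bounds above, so after rescaling $u$ stays $L^2$-close to a translate of $Q$ for all time. This is precisely the configuration excluded by the instability theorem of the present paper, which says a subthreshold solution $L^2$-close to the soliton must eventually move away from it whereas an almost periodic solution cannot; combined with modulation theory around $Q$ and the spectral structure of the linearized operators $L_+$ and $L_-$, this removes the last enemy.

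The step I expect to be the main obstacle is the interaction Morawetz estimate in the focusing, subthreshold regime. Even granted the uniform kinetic-energy bound, the gKdV monotonicity formula is far less robust than the Schr{\"o}dinger one: the equation is third order and essentially one-directional, the Morawetz weight couples badly to the extra derivative, and error terms that are negligible in the defocusing problem survive here and are controlled only after a careful frequency localization. Secondary obstacles are the additional-regularity bootstrap itself, and making the modulation/instability argument quantitative enough to close the soliton-like case for all times rather than merely along a subsequence.
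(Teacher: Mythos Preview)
This statement is labeled a \emph{conjecture} in the paper precisely because the program you outline does not close; the paper proves only the much weaker Theorem~\ref{main.thm}. There are two concrete gaps in your scheme, both of which the paper flags explicitly.

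First, your opening move --- invoke \cite{killip2009mass} together with \cite{dodson2016global} to extract an almost periodic solution of \eqref{1.1} with mass exactly $m_0<\|Q\|_{L^2}$ --- does not work all the way up to $\|Q\|_{L^2}$. In the Airy profile decomposition the oscillatory profiles are governed by an NLS whose effective mass differs from the gKdV mass by a fixed constant; consequently the reduction of \cite{killip2009mass} only yields an almost periodic gKdV solution when $m_0<\sqrt{5/6}\,\|Q\|_{L^2}$. The paper says this outright: ``unlike in the defocusing case, Conjecture~\ref{c1.3} does not imply Conjecture~\ref{c1.2}\ldots the constant $\sqrt{5/6}$ becomes quite important.'' So even if you could exclude every almost periodic solution below the ground state, the conjecture would remain open in the range $\sqrt{5/6}\,\|Q\|_{L^2}\le \|u_0\|_{L^2}<\|Q\|_{L^2}$.

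Second, your rigidity step in the ``$\int N^3=\infty$'' case relies on a focusing interaction Morawetz estimate, and the paper explicitly denies that such a tool is available: ``there is no analog to \cite{Tao2006TwoRO}, even for a solution with mass below the mass of the soliton.'' The uniform kinetic-energy bound you derive from sharp Gagliardo--Nirenberg does not by itself make the focusing $u^6$ term in the Morawetz computation have a good sign; it only bounds it, which is not enough to close the estimate. Finally, in your ``soliton-like'' case you assume that bounded $N(t)$ forces the orbit to be $\delta$-close to $Q$ in $L^2$, but almost periodicity gives compactness of the orbit modulo symmetries, not proximity to $Q$; Theorem~\ref{main.thm} of this paper handles only the $\delta$-neighborhood of $Q$ and says nothing about a compact orbit sitting elsewhere.
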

It can be verified that if Conjecture $\ref{c1.2}$ is true, then this implies that there does not exist an almost periodic solution to $(\ref{1.1})$ below the ground state.
\begin{conjecture}\label{c1.3}
There does not exist a nonzero, almost periodic solution $u$ to $(\ref{1.1})$ satisfying $0 < \| u \|_{L^{2}} < \| Q \|_{L^{2}}$.
\end{conjecture}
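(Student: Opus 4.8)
\medskip

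\noindent\textbf{Proof proposal.} Although this statement is only conjectural (and the present paper instead establishes the weaker instability result announced in the abstract), the natural line of attack is the concentration--compactness/rigidity scheme used for \eqref{1.2}, with the focusing sign of the nonlinearity controlled by the subthreshold hypothesis. The plan is to argue by contradiction: suppose $u$ is a nonzero almost periodic solution to \eqref{1.1} with $0<\|u\|_{L^{2}}<\|Q\|_{L^{2}}$ and parameters $N(t),x(t)$ as in Definition \ref{d1.1}, and, by a profile decomposition argument, pass to a minimal-mass such solution. The key first observation is that, since $\|u\|_{L^{2}}$ is strictly below the sharp Gagliardo--Nirenberg (Weinstein) threshold determined by $Q$ in \eqref{1.7}, the conserved energy $E(u)=\tfrac12\|u_{x}\|_{L^{2}}^{2}-\tfrac16\|u\|_{L^{6}}^{6}$ is coercive, $E(u)\gtrsim \|u_{x}(t)\|_{L^{2}}^{2}$; together with the $L^{2}$-precompactness of the rescaled orbit \eqref{1.4} this upgrades the compactness to $H^{1}$ and pins $\|u_{x}(t)\|_{L^{2}}\sim N(t)$. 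Following the reductions in \cite{dodson2017global} one then splits according to the behaviour of $N(t)$ into a self-similar/rapid-frequency-cascade regime and a quasi-soliton regime $N(t)\sim 1$.

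\medskip

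The self-similar and cascade regimes should be excluded exactly as in the defocusing problem: the additional-regularity (double-Duhamel) argument and the almost-conservation of mass at very low frequencies do not see the sign of $u^{5}$, and they contradict $\|u\|_{L^{2}}>0$. This reduces everything to a solution that remains spatially concentrated on the unit scale, with $N(t)\sim 1$ and, like the soliton $Q(x-t)$ of \eqref{1.6}, traveling essentially to the right, $\dot x(t)\sim 1$.

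\medskip

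For the quasi-soliton case the plan is a localized virial/monotonicity estimate in the spirit of Martel--Merle and of Tao's exclusion of the defocusing soliton in \cite{Tao2006TwoRO}. Testing \eqref{1.1} against a smooth increasing weight $\phi(x-x(t))$ yields the mass-flux identity
\begin{align*}
\frac{d}{dt}\int_{\mathbb{R}}\phi\big(x-x(t)\big)\,u^{2}\,dx
&= -3\int_{\mathbb{R}}\phi'\,u_{x}^{2}\,dx + \tfrac53\int_{\mathbb{R}}\phi'\,u^{6}\,dx \\
&\qquad - \dot x(t)\int_{\mathbb{R}}\phi'\,u^{2}\,dx + \int_{\mathbb{R}}\phi'''\,u^{2}\,dx,
\end{align*}
with $\phi',\phi'''$ evaluated at $x-x(t)$. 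Because KdV transports mass rightward and $N(t)\sim 1$, the right-hand side should have a definite sign over long time intervals, so the left-hand quantity must eventually leave every compact set; this contradicts the $L^{2}$-precompactness of \eqref{1.4} together with the lower bound $\int_{|x-x(t)|\le R}u^{2}\gtrsim 1$ coming from compactness and $\|u\|_{L^{2}}>0$. The subthreshold condition enters precisely in controlling the focusing term $\tfrac53\int\phi'u^{6}$ against $3\int\phi'u_{x}^{2}$: it is the coercivity of $E$ that keeps the nonlinear contribution from reversing the sign, reproducing the defocusing-type monotonicity.

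\medskip

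I expect the last step to be the main obstacle, and it is essentially why the statement remains a conjecture. Unlike the defocusing case there is no clean interaction Morawetz with a fixed sign, so one must genuinely exploit the near-soliton structure: decompose $u(t)=Q_{\lambda(t)}(\cdot-y(t))+\eta(t)$ with $\eta(t)$ small in $L^{2}$, track the modulation parameters $\lambda(t),y(t)$, and use the coercivity of the linearized operator $L=-\partial_{x}^{2}+1-5Q^{4}$ on the orthogonal complement of its kernel and its negative direction. This modulation analysis is needed to justify $\dot x(t)\sim 1$ and $N(t)\sim 1$, to control the error terms in the identity above, and — most delicately — to separate a genuine almost periodic solution from the soliton itself, whose mass equals exactly $\|Q\|_{L^{2}}$ and so sits at the boundary of the hypothesis. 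Carrying this out at the mere $L^{2}$ regularity of Definition \ref{d1.1}, rather than in $H^{1}$, is the crux; the instability theorem of the abstract sidesteps it because there the virial mechanism is only needed on a bounded time interval.
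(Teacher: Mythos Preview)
You correctly note that this is a conjecture the paper does not prove; the paper establishes only the special case (Theorem~\ref{t1.1}) where the almost periodic solution is assumed $\delta$-close to $Q$. Your treatment of the self-similar and rapid-cascade regimes matches the paper (Section~4): those arguments from \cite{dodson2017global} transfer because they do not use the sign of the nonlinearity, and the paper says so explicitly in the introduction.

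The genuine gap is in the quasi-soliton step, and your diagnosis of it is slightly off. You suggest that the subthreshold hypothesis, via $E(u)\gtrsim\|u_x\|_{L^2}^2$, should let you dominate $\tfrac53\int\phi' u^6$ by $3\int\phi' u_x^2$ and recover a defocusing-type sign in the mass-flux identity. But the sharp Gagliardo--Nirenberg inequality is global; it gives no relation between the \emph{weighted} quantities $\int\phi' u^6$ and $\int\phi' u_x^2$. A solution can concentrate its $L^6$ mass where $\phi'$ is large and its $\dot H^1$ mass where $\phi'$ is small. This is exactly why the paper remarks that there is no focusing analogue of \cite{Tao2006TwoRO}, even below the ground state.

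What the paper actually does in the $\delta$-close case is not to sign the virial directly but to pass to the decomposition $u=Q+\epsilon$ with $\|\epsilon\|_{L^2}\lesssim\delta$. The virial on $\epsilon$ produces the quadratic form $H(\epsilon,\epsilon)$ of \eqref{2.49}, and the spectral coercivity \eqref{6.26} of the linearized operator on the orthogonal complement fixed by Lemma~\ref{l2.2} replaces the missing sign; the nonlinear remainder $R(\epsilon)$ in \eqref{2.31} is controlled only because $\epsilon$ is small. This mechanism requires proximity to $Q$ as \emph{input}, not as output. So the obstruction to the full conjecture is not primarily the $L^2$-versus-$H^1$ issue you emphasize at the end---the paper handles that in the near-soliton regime via the exponential decay of Lemmas~\ref{l4.1}--\ref{l4.2} and the bootstrap $E_0\lesssim 1$ closing Section~7---but rather that for an almost periodic solution far from $Q$ there is currently no substitute for the coercivity of $L=-\partial_x^2+1-5Q^4$.
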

However, unlike in the defocusing case, Conjecture $\ref{c1.3}$ does not imply Conjecture $\ref{c1.2}$. This is because \cite{killip2009mass} states that if $(\ref{1.5})$ is globally well-posed and scattering when $\| u \|_{L^{2}} < \| Q \|_{L^{2}}$, Conjecture $\ref{c1.3}$ implies Conjecture $\ref{c1.2}$ when $0 < \| u \|_{L^{2}} < \sqrt{\frac{5}{6}} \| Q \|_{L^{2}}$. In the defocusing case, the presence of the constant $\sqrt{\frac{5}{6}}$ is unimportant, because scattering for the defocusing nonlinear Schr{\"o}dinger equation holds for any finite mass. However, in the focusing case, the constant $\sqrt{\frac{5}{6}}$ becomes quite important, since it is conjectured that $(\ref{1.1})$ scatters for any $\| u_{0} \|_{L^{2}} < \| Q \|_{L^{2}}$.\medskip

Conjecture $\ref{c1.2}$ would also imply instability of the soliton in an $L^{2}$-sense. For any initial data $u_{0} \in L^{2}$, $\| u_{0} \|_{L^{2}} < \| Q \|_{L^{2}}$, the solution to $(\ref{1.1})$ would scatter to a free solution, and thus the solution would approach distance
$$(\| Q \|_{L^{2}}^{2} + \| u_{0} \|_{L^{2}}^{2})^{1/2}$$ from any translation or rescaling of the soliton as $t \rightarrow \pm \infty$.\medskip

In a remarkable series of works, \cite{merle2001existence}, \cite{martel2000liouville}, \cite{martel2001instability}, \cite{martel2002blow}, \cite{merle1998blow}, \cite{martel2002stability} proved, among many nice results, the instability of the soliton in an $H^{1}$ sense, for initial data with mass greater than or equal to the soliton. In fact, they proved something more, that there initial data arbitrarily close to the soliton in $H^{1}$-norm, which eventually move away from the soliton in an $L^{2}$-sense. See \cite{martel2014blow} and \cite{MR3372073} for results in a weighted $L^{2}$ space.\medskip

In this paper we show that there are no almost periodic solutions to $(\ref{1.1})$ which are uniformly close to $Q(x)$ in $ L^2_x $ modulo symmetries. 

\begin{definition}
If a maximal-lifespan strong solution $ u $ to \eqref{1.1} on $ I $ satisfies
\begin{equation}\label{1.8}
\sup_{t \in I} \inf_{\lambda_{0}, x_{0}} \| u(t,x) - \frac{1}{\lambda_{0}^{1/2}} Q(\frac{x - x_{0}}{\lambda_{0}}) \|_{L^{2}(\mathbb{R})} \leq \delta
\end{equation}
then we say $ u $ is $ \delta $-close to $ Q $.  It is readily seen that the infimum is attained and the values $ \lambda_{0}(t), x_{0}(t)$ which attain the minimum can be chosen to be continuous. 

%
\end{definition}

The main result is
\begin{theorem}\label{main.thm}
There exists $\delta > 0$ sufficiently small such that there does not exist a maximal-lifespan solution to $(\ref{1.1})$ with $\| u_{0} \|_{L^{2}} < \| Q \|_{L^{2}}$ satisfying \eqref{1.8}.
\end{theorem}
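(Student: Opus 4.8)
The plan is to argue by contradiction: suppose $u$ is a maximal-lifespan solution of \eqref{1.1} on an interval $I$ with $\|u_0\|_{L^2} < \|Q\|_{L^2}$ and satisfying \eqref{1.8}, where $\delta$ is small and to be fixed independently of $u$. First observe that \eqref{1.8} forces $u$ to be almost periodic modulo symmetries in the sense of Definition~\ref{d1.1}: the rescaled orbit \eqref{1.4}, built from $N(t) = \lambda_0(t)^{-1}$ and $x_0(t)$, stays within $L^2$-distance $O(\delta)$ of the compact symmetry orbit of $Q$ and hence has compact closure. I would then set up modulation theory near the soliton, writing
\[
u(t,x) = \lambda(t)^{-1/2}\big(Q + \eta\big)\!\big(t,\ \lambda(t)^{-1}(x - x(t))\big),
\]
with $\lambda(t), x(t)$ close to $\lambda_0(t), x_0(t)$ and chosen so that $\eta(t)$ satisfies two orthogonality conditions — an even one, e.g.\ $(\eta, Q^3) = 0$, pinning the scale, and an odd one, e.g.\ $(\eta, Q_y) = 0$, pinning the center; then $\|\eta(t)\|_{L^2} \lesssim \delta$ uniformly, and in the rescaled time $s$ with $\tfrac{ds}{dt} = \lambda(t)^{-3}$ one obtains
\[
\eta_s = (L\eta)_y + \tfrac{\lambda_s}{\lambda}\,\Lambda(Q+\eta) + \big(\tfrac{x_s}{\lambda}-1\big)(Q+\eta)_y + (\text{nonlinear in }\eta),
\]
with $L = -\partial_y^2 + 1 - 5Q^4$ and $\Lambda = \tfrac12 + y\partial_y$, together with $|\tfrac{\lambda_s}{\lambda}| + |\tfrac{x_s}{\lambda}-1| \lesssim \|\eta\|_{L^2_{\mathrm{loc}}}$ (with exponential weights, since these parameters are driven by pairings of $\eta$ against exponentially localized functions).

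Second, I would extract the mechanism behind the sub-threshold mass. Scaling invariance of $M$ and conservation of mass give $M(u_0) = \|Q\|_{L^2}^2 + 2(\eta(t),Q) + \|\eta(t)\|_{L^2}^2$, so, writing $\mu \defeq \|Q\|_{L^2}^2 - M(u_0) > 0$, one gets the forced identity
\[
(\eta(t),Q) = -\tfrac12\mu - \tfrac12\|\eta(t)\|_{L^2}^2 < 0 \qquad\text{for all } t,
\]
a definite ``mass-deficit'' direction. Combining this with conservation of energy, the identities $E(Q) = 0$ and $E'(Q) = -Q$, the expansion $E(Q+\eta) = \tfrac12\mu + \tfrac12(L\eta,\eta) + O(\|\eta\|^3)$, the coercivity of $L$ modulo the two orthogonality conditions, and the sign $(\eta,Q) < 0$, I would conclude that $\lambda(t)$ is trapped between two positive constants and that $x(t)$ is strictly increasing with $\dot x(t)$ bounded below by a positive constant — the soliton profile is carried off to the right. (Since $E(u)$ may be ill defined at $L^2$ regularity, one first improves the regularity of the almost periodic solution, or invokes the $N(t)$ bounds of Section~\ref{sec:almost:per} with a truncated energy, to exclude $\lambda(t) \to 0,\infty$; one may then also reduce to $\sup I = +\infty$.)

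Third — the core of the argument — I would prove a localized virial / monotonicity estimate for $\eta$, in the spirit of Martel--Merle and of the defocusing analysis of \cite{Tao2006TwoRO}. Using an increasing truncated weight $\varphi_A$, the $\eta$-equation, the modulation bounds and the sign $(\eta,Q)<0$, and after including the correction terms needed to absorb the $\tfrac{\lambda_s}{\lambda}$ contribution, one derives
\[
\tfrac{d}{ds}\,\mathcal{F}_A(\eta(s)) \le -c\,\|\eta(s)\|_{H^1_{\mathrm{loc}}}^2 + \big(\text{errors} \lesssim \delta\,\|\eta(s)\|_{\mathrm{loc}}^2 + \text{modulation remainders}\big)
\]
for a fixed $c > 0$; a companion monotonicity for the mass transported to the right of the soliton neutralizes the remaining non-coercive directions ($\Lambda Q$, $yQ$), and together these yield $\int_{s_0}^{\infty}\|\eta(s)\|_{L^2_{\mathrm{loc}}}^2\,ds < \infty$. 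Hence $\eta(s_n)\to 0$ locally along some sequence $s_n \to \infty$, and the $L^2$-compactness of $\{\eta(t)\}$ coming from almost periodicity upgrades this to $\|\eta(s_n)\|_{L^2}\to 0$. But then $M(u_0) = \lim_n \|Q+\eta(s_n)\|_{L^2}^2 = \|Q\|_{L^2}^2$, i.e.\ $\mu = 0$, contradicting $\mu > 0$.

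I expect the main difficulty to lie in this last step, in two interacting ways. First, uniformity in the mass deficit: $\mu$ is positive but attached to each solution and may be arbitrarily small, so every constant in the modulation and virial estimates — and the threshold $\delta$ — must be taken independent of $\mu$, the deficit being used only to supply the sign of $(\eta,Q)$ and the terminal contradiction; in the regime $\mu \ll \delta^2$ the energy coercivity is essentially vacuous and the monotonicity itself must detect the $O(\mu)$ gap. Second, the low $L^2$ regularity: because \eqref{1.1} is only mass-critical, the energy identity, the coercivity of $L$, and the virial weights must all be realized on truncated quantities or after upgrading the regularity of the almost periodic solution, and controlling the attendant boundary and error terms — while simultaneously preventing $N(t)$ from escaping to $0$ or $\infty$ — is where the substantive work lies.
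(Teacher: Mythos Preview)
Your opening step contains a genuine gap. You claim that \eqref{1.8} alone forces $u$ to be almost periodic because the rescaled orbit lies in an $L^2$-ball of radius $O(\delta)$ about $Q$; but a ball in $L^{2}(\mathbb{R})$ is not precompact, so staying $\delta$-close to a single function (or to any noncompact set such as $G\cdot Q$) does \emph{not} give precompactness of the orbit. Concretely, nothing in \eqref{1.8} rules out that a fixed nonzero portion of the $L^{2}$-mass of $\eta(t)$ escapes to spatial or frequency infinity as $t$ varies. You use compactness essentially at the end (``the $L^{2}$-compactness of $\{\eta(t)\}$ \ldots upgrades this to $\|\eta(s_n)\|_{L^2}\to 0$''), so the argument does not close without it. The paper devotes all of Section~\ref{Sec.reduction.to.almost.periodic} to this point: it runs a Palais--Smale/profile-decomposition argument (Proposition~\ref{L2convergence_prop}, Theorem~\ref{almost_periodic_prop}) which, rather than showing that the given $u$ is almost periodic, produces a possibly \emph{different} maximal-lifespan solution $v$, still $\delta$-close to $Q$ and with $M(v)<M(Q)$, that \emph{is} almost periodic. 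Your proposal skips this entire construction.

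Even after granting almost periodicity, your route diverges from the paper in ways that hide nontrivial work. You want to use energy conservation and coercivity of $L$ to trap $\lambda(t)$ between two positive constants; at $L^{2}$ regularity $E(u)$ is not a priori defined, and ``improving the regularity of the almost periodic solution'' is not a one-line remark here. The paper does not attempt this: instead it first makes the frequency-scale reductions of Theorem~\ref{t3.3} (including ruling out the self-similar scenario, Lemma~\ref{t3.1}, and Proposition~\ref{t3.2}) to arrange $N(t)\geq 1$ and $\int_I N(t)^2\,dt=\infty$, then proves exponential spatial decay of $u$ on both sides of $x(t)$ (Lemmas~\ref{l4.1}--\ref{l4.2}) via Kato-type monotonicity. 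That decay is what makes the virial weights $y$ and $\int_{-\infty}^{y}\Lambda Q$ integrable against $\epsilon$ and yields $E_0\lesssim 1$ only at the very end. The orthogonality conditions actually used are $(\epsilon,yQ_y)=(\epsilon,y\Lambda Q)=0$, chosen so that the two virial identities (Theorem~\ref{t6.1} and \eqref{5.1}) close; with your choices $(\eta,Q^3)=(\eta,Q_y)=0$ the cancellations \eqref{6.20} and the reduction of $(L\epsilon)_y$ against $\int^{y}\Lambda Q$ to $2(\epsilon,Q)$ would not go through as written. Finally, the paper's contradiction comes directly from $\int \lambda(s)\|\epsilon\|_{H^1}^2\,ds<\infty$ together with $\int\lambda(s)\,ds=\infty$, giving $\|\epsilon(s_n)\|_{H^1}\to 0$; no compactness upgrade is invoked there.
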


In other words, Theorem \ref{main.thm} states that there no solutions $ \delta $-close to $ Q $. A consequence of this fact is that for any initial data satisfying $\| u_{0} \|_{L^{2}} < \| Q \|_{L^{2}}$, the solution to $(\ref{1.1})$ with such initial data must eventually move a distance $\delta > 0$ away from the soliton, modulo translations and rescalings, where $\delta > 0$ is a small, fixed constant.\medskip

We split Theorem \ref{main.thm} into two statements. The first part reduces the study to the existence of almost-periodic solutions. 

\begin{theorem} \label{almost_periodic_prop}
Suppose $ u:I \times \mb{R} \to \mb{R} $ is a maximal-lifespan strong solution with $ \vn{u_0}_{L^2} < \vn{Q}_{L^2} $ to the mass-critical focusing gKdV equation \eqref{1.1} which is $ \delta $-close to $ Q $. Then, if $ \delta $ is small enough, there exists an almost periodic modulo symmetries maximal-lifespan (strong) solution $ v $ which is $ \delta $-close to $ Q $ with mass less than $ Q $.
\end{theorem}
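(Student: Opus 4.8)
The plan is to run the standard concentration-compactness reduction to a minimal-mass, almost periodic solution, while additionally propagating the $\delta$-closeness constraint \eqref{1.8} through the limit. The first point is that a $\delta$-close solution cannot scatter: if a maximal-lifespan solution $u$ with $\|u_0\|_{L^2}<\|Q\|_{L^2}$ had finite scattering norm on $[t_0,\sup I)$, then by the blow-up criterion it would be global in forward time and would scatter, so that, as recalled in the introduction, $\inf_{\lambda_0,x_0}\|u(t)-\lambda_0^{-1/2}Q((\cdot-x_0)/\lambda_0)\|_{L^2}$ would tend to $(\|Q\|_{L^2}^2+\|u_0\|_{L^2}^2)^{1/2}$ as $t\to+\infty$; since this limit exceeds $\|Q\|_{L^2}>\delta$ once $\delta$ is small, $u$ could not satisfy \eqref{1.8}. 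By the same argument in backward time, any $\delta$-close solution has infinite scattering norm on both $(\inf I,t_0]$ and $[t_0,\sup I)$ for every $t_0$ in the open maximal interval $I$.

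Next I would set $m_c$ to be the infimum of $\|u_0\|_{L^2}^2$ over all maximal-lifespan solutions of \eqref{1.1} that are $\delta$-close to $Q$. By the hypothesis of the theorem this quantity is well defined and satisfies $m_c\le\|u_0\|_{L^2}^2<\|Q\|_{L^2}^2$, and by the small-data theory together with the previous paragraph it is strictly positive (solutions of sufficiently small mass have finite scattering norm, hence are not $\delta$-close). I would then take $\delta$-close solutions $u_n$ with $\|u_{n,0}\|_{L^2}^2\to m_c$; after a time translation placing $0$ in the interior of the maximal interval $I_n$, and after applying the scaling–translation symmetry that realizes the infimum in \eqref{1.8} at $t=0$, I obtain solutions $\tilde u_n$ of \eqref{1.1} that are still $\delta$-close to $Q$ (the left-hand side of \eqref{1.8} is invariant under the scaling–translation group), have mass tending to $m_c$, have infinite scattering norm on both sides of $t=0$, and satisfy $\|\tilde u_n(0)-Q\|_{L^2}\le\delta$.

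The crux, and the step I expect to be the main obstacle, is the Palais–Smale condition modulo symmetries for \eqref{1.1}. Combining the $L^2$ linear profile decomposition adapted to the Airy evolution, the associated nonlinear profiles, the stability theory, and the minimality of $m_c$, one argues — exactly as in the concentration-compactness analysis of the mass-critical gKdV equation (cf. \cite{killip2009mass,dodson2017global}) — that, after passing to a subsequence and applying a further symmetry $g_n$ which is a combination of a rescaling and a spatial translation (its time-translation parameter must remain bounded and may therefore be normalized to zero, since otherwise the relevant nonlinear profile would behave like a free solution and could not obstruct finiteness of the scattering norm), the renormalized data $g_n\tilde u_n(0)$ converge strongly in $L^2$ to some $v_0$ with $\|v_0\|_{L^2}^2=m_c$, and the maximal-lifespan solution $v$ with $v(0)=v_0$ has infinite scattering norm on both sides of $0$ and is almost periodic modulo symmetries in the sense of Definition \ref{d1.1}. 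Since $g_n$ again belongs to the scaling–translation group, $g_n\tilde u_n$ remains $\delta$-close to $Q$, so letting $n\to\infty$ in $\|g_n\tilde u_n(0)-Q\|_{L^2}\le\delta$ yields $\|v_0-Q\|_{L^2}\le\delta$; and $\|v_0\|_{L^2}^2=m_c<\|Q\|_{L^2}^2$, so $v$ has mass less than $Q$.

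It remains to upgrade closeness of $v$ at $t=0$ to the full estimate \eqref{1.8}. Fix any compact $[a,b]\subset I_v$ containing $0$; then $v$ has finite scattering norm on $[a,b]$, so the stability theory together with $g_n\tilde u_n(0)\to v_0$ in $L^2$ shows that for $n$ large $g_n\tilde u_n$ is defined on $[a,b]$ with $\sup_{s\in[a,b]}\|g_n\tilde u_n(s)-v(s)\|_{L^2}\to0$. For $s\in[a,b]$ the triangle inequality then gives $\inf_{\lambda_0,x_0}\|v(s)-\lambda_0^{-1/2}Q((\cdot-x_0)/\lambda_0)\|_{L^2}\le\delta+\|g_n\tilde u_n(s)-v(s)\|_{L^2}$; taking the supremum over $s\in[a,b]$, then $n\to\infty$, and then exhausting $I_v$ by such intervals, one concludes that $v$ is $\delta$-close to $Q$. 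The concentration-compactness input in the third paragraph is standard for \eqref{1.1}; the only genuinely new ingredient is the propagation of the constraint \eqref{1.8} through the extraction, which reduces, as indicated, to the invariance of the quantity in \eqref{1.8} under scaling and translation together with the local well-posedness and stability theory.
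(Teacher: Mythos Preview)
Your outline follows the paper's overall strategy, but there is a genuine gap in the Palais--Smale step. You invoke the ``standard'' concentration-compactness argument from \cite{killip2009mass,dodson2017global}, but that argument is tailored to the minimal mass $m_{\mathrm{scat}}$ at which scattering fails, and it crucially uses that every nonlinear profile with mass strictly below $m_{\mathrm{scat}}$ scatters. Your $m_c$ is instead the infimum over $\delta$-close solutions; there is no a priori reason for $m_c=m_{\mathrm{scat}}$ (scattering below $\|Q\|_{L^2}$ is precisely the open Conjecture~\ref{c1.2}), so a profile with mass in $[m_{\mathrm{scat}},m_c)$ need not scatter, and ``minimality of $m_c$'' tells you nothing about it. The references you cite do not supply the conclusion you need.

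What actually makes the argument work, and what the paper's Proposition~\ref{L2convergence_prop} exploits, is that your normalization $\|\tilde u_n(0)-Q\|_{L^2}\le\delta$ forces the weak limit $\phi^1$ to carry almost all the mass, so every secondary profile and the remainder have \emph{small} mass (this is \eqref{smallness}) and hence scatter by small-data theory, independently of $m_{\mathrm{scat}}$. To then contradict $\|\phi^1\|_{L^2}^2<m_c$ one must show that the solution $v^1$ launched from $\phi^1$ is itself $\delta$-close to $Q$ on its entire maximal interval (the paper's Step~5), which places it in the minimizing class. Your final paragraph is essentially that computation, but you position it \emph{after} convergence has been obtained, whereas it is the mechanism that \emph{produces} convergence. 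For the same reason your ordering of the conclusions is inverted: almost periodicity of $v$ comes from a second application of the constraint-aware Palais--Smale to $v$ at arbitrary times $t_n$, which requires first knowing that $v$ is $\delta$-close; deducing almost periodicity before $\delta$-closeness, by appeal to the standard references, is not justified. (Incidentally, once you have normalized so that $\tilde u_n(0)$ is $\delta$-close to $Q$, the weak limit is already nontrivial and no ``further symmetry $g_n$'' is needed; that part of your discussion is a red herring.)
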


The proof is given in Section \ref{Sec.reduction.to.almost.periodic} and it relies essentially on a Palais-Smale result based on the Airy linear profile decomposition, decoupling and an approximation of gKdV solutions by NLS solutions, which are tools developed in \cite{shao2009}, \cite{Tao2006TwoRO}, \cite{killip2009mass} and reviewed in Section \ref{Sec.prelim}. See \cite{10.1093/imrn/rny164} for a similar argument in the case of the mass-critical nonlinear Schr{\"o}dinger equation.\medskip

Once we have this reduction, we prove that such solutions cannot exist.

\begin{theorem}\label{t1.1}
There are no almost periodic solutions to \eqref{1.1} with mass less than $ Q $ which are $ \delta $-close to $ Q $, if $ \delta $ is small enough. 
\end{theorem}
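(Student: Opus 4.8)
The plan is to describe $ v $ by modulation around $ Q $, to read off from the sub-threshold mass a uniform lower bound on the modulation error, and then to contradict it with a Martel--Merle type virial/monotonicity estimate; the sub-threshold mass is precisely the feature absent from the known instability results at mass $ \geq \vn{Q}_{L^2} $. Concretely, since $ v $ is $ \delta $-close to $ Q $, for $ \delta $ small one writes on the maximal interval $ I $
\[
v(t,x) = \frac{1}{\lambda(t)^{1/2}}\,(Q+\varepsilon)\!\Big(t,\frac{x-x(t)}{\lambda(t)}\Big),
\]
with $ \lambda,x\in C^1 $, $ \vn{\varepsilon(t)}_{L^2}\lesssim\delta $ uniformly in $ t $, and $ \varepsilon(t) $ subject to two orthogonality conditions (fixing scale and translation) chosen so that, in the rescaled time $ s $ given by $ ds=\lambda^{-3}\,dt $, the parameters $ \lambda $ and $ x $ obey ODEs with right-hand sides controlled by $ \varepsilon $. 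As the decomposition is algebraic and the mass is conserved, $ \vn{Q+\varepsilon(t)}_{L^2}=\vn{v_0}_{L^2}<\vn{Q}_{L^2} $, so by the reverse triangle inequality
\[
\vn{\varepsilon(t)}_{L^2}\;\geq\;\vn{Q}_{L^2}-\vn{v_0}_{L^2}\;=:\;c_0\;>\;0\qquad\text{for all }t\in I .
\]
This uniform lower bound — impossible at threshold mass — is the engine of the argument.

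\emph{Regularity, localization, and the infinite $ s $-lifespan.} From the compactness in Definition \ref{d1.1} and the standard negative-to-positive regularity bootstrap for almost periodic solutions (as in \cite{dodson2017global} and its NLS counterparts), $ v(t) $ has uniformly bounded higher Sobolev norms; together with $ \delta $-closeness this yields that $ \varepsilon(t) $ is small in $ H^1 $, uniformly concentrated near $ x(t) $ at scale $ \lambda(t) $ (no mass escapes to spatial infinity), and that $ N(t)\sim\lambda(t)^{-1} $. In particular, for a fixed rapidly decaying weight $ \omega>0 $ with $ \omega(0)>0 $,
\[
\int\varepsilon(t,y)^2\,\omega(y)\,dy\;\gtrsim\;\vn{\varepsilon(t)}_{L^2}^2\;\geq\;c_0^2\qquad\text{for all }t .
\]
One checks moreover, using the conserved energy (recall $ E(Q)=0 $ in the mass-critical case) together with the monotonicity below, that $ \lambda(t) $ stays bounded above; hence the lifespan measured in $ s $ is infinite (if $ \sup I=\infty $ this follows since then $ s\gtrsim t $; if $ \sup I<\infty $, a maximal strong solution can only terminate through $ \lambda(t)\to0 $, which forces $ \int\lambda^{-3}\,dt=\infty $).

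\emph{Virial/monotonicity and conclusion.} Following the gKdV virial machinery of \cite{martel2000liouville} and the related works cited above, one introduces a \emph{bounded} functional $ \mathcal F $ adapted to the soliton — for instance a localized mass $ \int v(t,x)^2\,\psi\big(\tfrac{x-\gamma(t)}{R}\big)\,dx $ with $ \gamma $ travelling slightly slower than the soliton, combined with the localized energy--virial that detects $ \varepsilon $ at the core — with $ |\mathcal F|\lesssim\vn{v_0}_{L^2}^2 $ and, in the time $ s $, a one-sided inequality
\[
\frac{d}{ds}\,\mathcal F\;\leq\;-\,c\!\int\varepsilon(s,y)^2\,\omega(y)\,dy\;+\;(\mathrm{error}),
\]
the error being produced by the modulation parameters, the Airy part of the flow, and the nonlinearity, and controlled by the two preceding steps (in the $ L^2 $ framework the nonlinear contributions require the local smoothing and Strichartz estimates, not merely mass conservation). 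Integrating over the infinite $ s $-range and using the lower bound of the previous step,
\[
\infty\;=\;\int c\,c_0^2\,ds\;\leq\;\mathcal F\big|_{s\to-\infty}-\mathcal F\big|_{s\to+\infty}+\int(\mathrm{error})\,ds\;<\;\infty ,
\]
a contradiction. (Equivalently, once the preceding reductions place us at a solution uniformly $ H^1 $-close to $ Q $ on $ \mathbb R $, the Liouville theorem of \cite{martel2000liouville} forces $ v $ to be a soliton, contradicting $ \vn{v_0}_{L^2}<\vn{Q}_{L^2} $.)

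\emph{Main obstacle.} The hard part is the virial step: building the monotone quantity with errors that are \emph{genuinely} controllable in the $ L^2 $ setting — which couples the modulation/monotonicity analysis to dispersive $ L^2 $ estimates for $ \varepsilon $ — and coping with the degeneracy of the scaling direction $ \Lambda Q=\tfrac12 Q+yQ_y $, for which $ \langle\Lambda Q,Q\rangle=0 $; this mass-critical degeneracy, central to the Martel--Merle analysis, is what complicates the coercivity of $ \mathcal L=-\partial_y^2+1-5Q^4 $ (one negative direction, kernel $ Q' $) on the relevant orthogonal complement, and it is also the reason controlling the scale $ \lambda(t) $ requires care.
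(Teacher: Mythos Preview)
Your broad plan --- modulation, the sub-threshold mass giving a uniform lower bound on $\|\varepsilon\|_{L^2}$, and a virial/monotonicity contradiction --- is the right one and matches the paper's strategy.  But two concrete pieces are missing, and without them the argument does not close.

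First, the appeal to a ``negative-to-positive regularity bootstrap'' to obtain uniform $H^1$ smallness of $\varepsilon$ is not available here, and the paper neither proves nor uses it.  What replaces it is an \emph{exponential $L^2$-decay} estimate for $u$ on both sides of $x(t)$ (Lemmas~\ref{l4.1}--\ref{l4.2}), obtained by a Kato-type monotonicity that requires the preliminary reduction to $N(t)\ge 1$ and $\int_I N(t)^2\,dt=\infty$ (Theorem~\ref{t3.3}).  This decay is what makes the weighted virial $M(s)=\tfrac12\lambda(s)\int y\,\epsilon^2\,dy$ bounded; your parenthetical shortcut through the Liouville theorem of \cite{martel2000liouville} needs the missing uniform $H^1$ closeness and therefore does not go through.

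Second, and more seriously, the single inequality $\frac{d}{ds}\mathcal F\le -c\int\varepsilon^2\omega+(\mathrm{error})$ is too optimistic: the quadratic form $H(\epsilon,\epsilon)$ that actually drops out of $\frac{d}{ds}M(s)$ is \emph{not} coercive on all of $\epsilon$, only on $\epsilon_1=\epsilon-\frac{(\epsilon,Q)}{\|Q\|_{L^2}^2}Q$ (see \eqref{6.26}).  Controlling the $(\epsilon,Q)$ mode is the heart of the matter, and the paper does it with a \emph{second} virial identity (Theorem~\ref{t6.1}, built from the antiderivative of $\Lambda Q$) giving $|\int_0^T\lambda^{1/2}(\epsilon,Q)\,ds|\lesssim C(u)+\int_0^T\lambda^{1/2}\|\epsilon\|_{L^2}^2\,ds$, combined with the mass identity $-(\epsilon,Q)=M+\tfrac12\|\epsilon\|_{L^2}^2$ where $M=\tfrac12(\|Q\|_{L^2}^2-\|u_0\|_{L^2}^2)>0$.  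The two virials carry different powers of $\lambda(s)$, and to compare them one needs a \emph{lower} bound on $\lambda$, which the paper extracts from conservation of energy via $\lambda(s)^2\gtrsim M/E_0$.  Your sketch contains neither the second virial nor the $\lambda$-comparison mechanism, and your ``Main obstacle'' paragraph correctly identifies the coercivity problem but does not indicate how it is resolved.
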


The proof of Theorem $\ref{t1.1}$ combines the ideas of \cite{dodson2017global} and in \cite{merle2001existence}, \cite{martel2000liouville}, \cite{martel2001instability}, \cite{martel2002blow}, \cite{merle1998blow}, \cite{martel2002stability}, \cite{martel2014blow}, and \cite{MR3372073}. The proof of scattering in \cite{dodson2017global} reduced an almost periodic solution to three scenarios: a self-similar solution, a double rapid cascade solution, and a quasisoliton solution. The arguments used in excluding the self-similar and double rapid cascade solutions can also be used to exclude an almost periodic solution to $(\ref{1.1})$ with mass less than the soliton, regardless of whether it is close to the soliton or not. However, in the defocusing case, the interaction Morawetz estimate developed in \cite{dodson2017global} used \cite{Tao2006TwoRO}, and there is no analog to \cite{Tao2006TwoRO}, even for a solution with mass below the mass of the soliton. Instead, we rely on the Morawetz arguments in \cite{merle2001existence}, \cite{martel2000liouville}, \cite{martel2001instability}, \cite{martel2002blow}, \cite{merle1998blow}, \cite{martel2002stability}, \cite{martel2014blow}, and \cite{MR3372073}. These Morawetz estimates depend very much on the fact that the solution is close to the soliton in an $L^{2}$-sense, and can be used to show that a solution cannot stay close to the soliton for the entire time of its existence.\medskip

\textbf{Acknowledgements:} The authors are grateful to Jonas L{\"u}hrmann, Yvann Martel, and Daniel Tataru for several helpful conversations concerning this problem. The first author also acknowleges the support of NSF grant DMS-$1764358$.

\section{Preliminaries} \label{Sec.prelim}

\subsection{Notation and linear estimates}

We will write $ x \ls y $ to denote $ x \leq C y$ for a uniform constant $ C >0 $. We denote $ \jb{x} = (1+x^2)^{1/2}$.
The one-dimensional Fourier transform is defined by 
$$
\hat{f}(\xi):=\frac{1}{(2 \pi)^{1 / 2}} \int_{\mathbb{R}} e^{-i x \xi} f(x) \dd x, \qquad \xi \in \mb{R}
$$
which is used to define the linear propagator and fractional differentiation operators by 
$$
\widehat{e^{-t \partial_{x}^{3}} f} (\xi)= e^{-it \xi^3} \hat{f} (\xi), \qquad \widehat{\left|\partial_{x}\right|^{s} f}(\xi):=|\xi|^{s} \hat{f}(\xi).
$$

For an interval $ I $ one considers the mixed norms on $ I \times \mathbb{R} $
\begin{align*}
\vn{F}_{L_{t}^{p} L_{x}^{q}(I \times \mathbb{R})} = \Big(\int_{I}\big(\int_{\mathbb{R}}|F(t, x)|^{q} \dd x \big)^{p / q} \dd t \Big)^{1 / p}, \\ 
\vn{F}_{L_{x}^{p} L_{t}^{q}(I \times \mathbb{R}) } = \Big(\int_{\mathbb{R}}\big(\int_{I}|F(t,x )|^{q} \dd t\big)^{p / q} \dd x\Big)^{1 / p},    
\end{align*}
with the standard modification when $ p = \infty $ or $ q = \infty $.
We recall the dispersive estimate
$$
\left\|e^{-t \partial_{x}^{3}} u_{0}\right\|_{L_{x}^{p}(\mathbb{R})} \lesssim t^{-\frac{2}{3}\left(\frac{1}{2}-\frac{1}{p}\right)}\left\|u_{0}\right\|_{L_{x}^{p^{\prime}}(\mathbb{R})}, \qquad 2 \leq p \leq \infty.
$$

We will consider weakly convergent sequences in $ L_{x}^{2}(\mathbb{R}) $, i.e. $ f_n \rightharpoonup f $ if 
$$ \lng f_n,  g \rng =  \int_{\mathbb{R}} f_n(x) \bar{g}(x) \dd x \to  \int_{\mathbb{R}} f(x) \bar{g}(x) \dd x \qquad \forall \  g \in  L_{x}^{2}(\mathbb{R}).
$$
By approximation arguments, it sufficies to check this condition for all $ g \in C_c(\mathbb{R}) $.
A basic fact which we will be using tacitly is that if  $ f_n \rightharpoonup f $ then $$ \vn{f}_{L_{x}^{2}(\mathbb{R})} \leq 
\liminf_{n \to \infty} \vn{f_n}_{L_{x}^{2}(\mathbb{R})}. $$
\subsection{Solutions to gKdV}

Throughout this paper we will consider strong solutions, defined as follows. 

\begin{definition} \label{deff} \ 

\begin{enumerate}
\item $A$ function $u: I \times \mathbb{R} \rightarrow \mathbb{R}$ on a non-empty interval $0 \in I \subset \mathbb{R}$ is $a$ (strong) solution to \eqref{1.1} if it lies in the class $C_{t}^{0} L_{x}^{2}(J \times \mathbb{R}) \cap L_{x}^{5} L_{t}^{10}(J \times \mathbb{R})$ for any compact $J \subset I$ and obeys the Duhamel formula
$$
u(t)=e^{-t \partial_{x}^{3}} u_{0}-\int_{0}^{t} e^{-(t-\tau) \partial_{x}^{3}} \partial_{x}\left(u^{5}(\tau)\right) d \tau.
$$
We say that $u$ is a maximal-lifespan solution if the solution cannot be extended to any strictly larger interval. We say that u is a global solution if $I=\mathbb{R}$.
\item The scattering size is defined to be 
$$S_{I}(u)=\int_{\mathbb{R}}\left(\int_{I}|u(t, x)|^{10} d t\right)^{1 / 2} d x=\|u\|_{L_{x}^{5} L_{t}^{10}(I \times \mathbb{R})}^{5}. $$
\item We say that a solution $u$ to \eqref{1.1}  blows up forward in time if there exists $t_{1} \in I$ such that $
S_{\left[t_{1}, \sup (I)\right)}(u)=\infty
$
and that $u$ blows up backward in time if there exists a time $t_{1} \in I$ such that
$
S_{\left(\inf (I), t_{1}\right]}(u)=\infty.
$
\item We say that $u$ scatters forward/backwards in time if there exists a unique $u_{\pm} \in L_{x}^{2}(\mathbb{R})$ such that
\be \label{scattering}
\lim _{t \rightarrow \pm \infty}\left\|u(t)-e^{-t \partial_{x}^{3}} u_{\pm}\right\|_{L_{x}^{2}(\mathbb{R})}=0.
\ee
\item The symmetry group $ G $ is defined as the set of unitary transformations
$$
G=\{ g_{x_{0}, \lambda}: L_{x}^{2}(\mathbb{R}) \rightarrow L_{x}^{2}(\mathbb{R}) | \ (x_0,\lmd) \in \mb{R} \times (0,\infty), \ g_{x_{0}, \lambda} f (x):=\lambda^{-\frac{1}{2}} f\left(\lambda^{-1}\left(x-x_{0}\right)\right)
\}. 
$$
For $u: I \times \mathbb{R} \rightarrow \mathbb{R}$, one defines $T_{g_{x_{0}, \lambda}} u: \lambda^{3} I \times \mathbb{R} \rightarrow \mathbb{R}$  by
$$
T_{g_{x_{0}, \lambda}} u (t, x):=\lambda^{-\frac{1}{2}} u\left(\lambda^{-3} t, \lambda^{-1}\left(x-x_{0}\right)\right).
$$
\end{enumerate}
\end{definition}

$T_{g} u$ solves \eqref{1.1} with initial data $g u_{0}$ if $ u $ is a solution. Moreover,  scattering sizes are invariant
$$ S_{\lmd^3 I}( T_{g} u )  =   S_{I}(u) , \qquad g \in G. $$

We note that $ G $ is a Lie group and the map $ g \mapsto T_g $ is a homomorphism. Giving the operators in $ G $ the strong operator topology, then the identification $ (x_0,\lmd) \mapsto g_{x_{0}, \lambda} $ is a homeomorphism between $ \mb{R} \times (0,\infty) $ and $ G $. 
Thus we say $ g_{x_n, \lmd_n} \to \infty $ if $ \vm{x_n} + \lmd_n + \lmd_n^{-1} \to \infty $. Moreover, in that case $ g_{x_n, \lmd_n}$ converges to $ 0 $ in the weak operator topology.

\

The $ L^2 $ local well-posedness theory of \eqref{1.1} was established by Kenig, Ponce, Vega in \cite{KenigPonceVega}.

\begin{theorem}[Local well-posedness \cite{KenigPonceVega}] For any $u_{0} \in L_{x}^{2}(\mathbb{R})$ and $t_{0} \in \mathbb{R},$ there exists a unique solution $ u $ to \eqref{1.1} with $u\left(t_{0}\right)=u_{0}$ which has maximal lifespan. Let  $ I $ denote the lifespan of $ u $. Then:
\begin{enumerate}
\item I is an open neighborhood of $t_{0}$.
\item If $\sup (I) / \inf (I)$ is finite then $u$ blows up forward / backward in time. 
\item If sup $(I)=+\infty$ and $u$ does not blow up forward in time, then u scatters forward in time. Conversely, given $u_{+} \in L_{x}^{2}(\mathbb{R})$ there is a unique solution to \eqref{1.1} in a neighborhood of $\infty$ so that \eqref{scattering} holds. One can define scattering backward in time in a completely analogous manner.
\item If $M\left(u_{0}\right)$ is sufficiently small then $u$ is a global solution which does not blow up either forward or backward in time and
$
S_{\mathbb{R}}(u) \lesssim M(u)^{5 / 2}.
$
\item Uniformly continuous dependence on initial data holds, see Corollary \ref{sol.conv}.
\end{enumerate}
\end{theorem}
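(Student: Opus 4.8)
The plan is to realize solutions of \eqref{1.1} as fixed points of the Duhamel map
\[
\Phi(u)(t) = e^{-(t-t_0)\partial_x^3}u_0 - \int_{t_0}^t e^{-(t-\tau)\partial_x^3}\partial_x\big(u^5(\tau)\big)\dd\tau
\]
in a resolution space $X(I)$ combining the mass-critical Airy--Strichartz norm $L_x^5L_t^{10}$, the energy norm $L_t^\infty L_x^2$, and an auxiliary Kato local-smoothing norm (an $L_x^pL_t^q$ norm of $|\partial_x|^{1/4}u$, or of $\partial_x u$) tailored to absorb the spatial derivative sitting in front of $u^5$. The linear inputs are the Kenig--Ponce--Vega estimates for the Airy group: the Strichartz bound $\|e^{-t\partial_x^3}f\|_{L_x^5L_t^{10}}\lesssim\|f\|_{L^2}$, the maximal-function estimate $\|e^{-t\partial_x^3}f\|_{L_x^4L_t^\infty}\lesssim\||\partial_x|^{1/4}f\|_{L^2}$, and, crucially, the sharp inhomogeneous local smoothing estimate which recovers a full derivative, $\big\|\partial_x\int_{t_0}^t e^{-(t-\tau)\partial_x^3}F(\tau)\dd\tau\big\|_{L_x^\infty L_t^2}\lesssim\|F\|_{L_x^1 L_t^2}$, together with the companion retarded estimates mapping back into the Strichartz and energy norms.

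Granting these, the nonlinear estimate closes by Hölder in $x$, distributing the five copies of $u$ over $L_x^5$-type norms while leaving the derivative on the smoothing side; schematically $\|\Phi(u)-e^{-(t-t_0)\partial_x^3}u_0\|_{X(I)}\lesssim\|u\|_{L_x^5L_t^{10}(I)}^4\,\|u\|_{X(I)}$, with the analogous difference bound for $\Phi(u)-\Phi(v)$ (note $\|u^5\|_{L_x^1L_t^2}=S_I(u)$, so a small scattering size means a small nonlinearity). Picking $I\ni t_0$ short enough that $\|e^{-(t-t_0)\partial_x^3}u_0\|_{L_x^5L_t^{10}(I)}\le\eta$ — possible for each $u_0\in L^2$ since the global linear Strichartz norm is finite, by density and monotone convergence — makes $\Phi$ a contraction on a small ball in $X(I)$, giving a unique local solution; local uniqueness propagates to the whole lifespan by a standard open--closed argument, and gluing intervals and taking the maximal union yields the maximal-lifespan solution and item (1). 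If $M(u_0)$ is small then $\|e^{-t\partial_x^3}u_0\|_{L_x^5L_t^{10}(\mathbb{R})}\lesssim M(u_0)^{1/2}$ is already below $\eta$, so the contraction runs on $I=\mathbb{R}$ and delivers $S_{\mathbb{R}}(u)=\|u\|_{L_x^5L_t^{10}(\mathbb{R})}^5\lesssim M(u)^{5/2}$, which is item (4).

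The blow-up dichotomy (item 2) follows from stability of the fixed point under gluing: if the forward endpoint of $I$ is finite while $S_{[t_0,\sup I)}(u)<\infty$, partition $[t_0,\sup I)$ into finitely many subintervals on each of which $\|u\|_{L_x^5L_t^{10}}\le\eta$, propagate the full $X$-norm across them to bound $\limsup_{t\uparrow\sup I}\|u(t)\|_{L^2}$, and then restart the local theory from a time near $\sup I$ to extend $u$ past it, contradicting maximality; hence a finite forward endpoint forces $S=\infty$, and symmetrically backward. For item (3), if $\sup I=+\infty$ and $S_{[t_0,\infty)}(u)<\infty$, the retarded estimates show $\int_{t_0}^\infty e^{-(\tau-t_0)\partial_x^3}\partial_x(u^5)\dd\tau$ converges in $L^2$, and $u_+:=u_0-\int_{t_0}^\infty e^{-(\tau-t_0)\partial_x^3}\partial_x(u^5)\dd\tau$ satisfies $\|u(t)-e^{-(t-t_0)\partial_x^3}u_+\|_{L^2}\to0$; conversely, given $u_+$, a contraction near $+\infty$ comparing to $e^{-t\partial_x^3}u_+$ produces the wave operator, and backward scattering is identical. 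Item (5), uniformly continuous dependence, is the perturbation lemma quoted as Corollary~\ref{sol.conv}: an approximate solution with error small in the dual Strichartz norm and with $L^2$-close data remains $X$-close to the genuine solution, proved by a continuity/bootstrap argument over finitely many intervals of bounded scattering size.

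The one genuine difficulty is the derivative in the nonlinearity $\partial_x(u^5)$: bare Strichartz theory cannot absorb it, so the construction depends on pairing the critical $L_x^5L_t^{10}$ norm with Kato-type smoothing norms, using the inhomogeneous Airy smoothing estimate to recover exactly one derivative while the remaining four factors of $u$ are carried by a scaling-critical norm. Checking that the Hölder exponents in this multilinear estimate balance — in one spatial dimension, at the $L^2$-critical scaling — is the crux, and it is precisely what \cite{KenigPonceVega} establishes; once the space $X$ and its estimates are fixed, the gluing, scattering, and stability statements are routine.
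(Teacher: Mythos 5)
The paper does not reprove this theorem; it is quoted verbatim from Kenig--Ponce--Vega. Your sketch correctly reconstructs the argument of that cited reference --- the Airy Strichartz, maximal-function, and Kato local-smoothing estimates combined in a mixed-norm resolution space to close a contraction for the Duhamel map despite the loss of a derivative in $\partial_x(u^5)$, together with the standard consequences (blow-up criterion, scattering via Cauchy-in-$L^2$ for the Duhamel tail, small-data global theory from the global Strichartz bound, and stability by iterated perturbation over intervals of small scattering size) --- so it matches the method of the source the paper is citing.
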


\subsection{Stability and corollaries} 
The stability theory of the generalized KdV equation \eqref{1.1} is discussed in detail in \cite{killip2009mass}. 

\begin{lemma}[Short time stability \cite{killip2009mass}(Lemma 3.3)] \label{short.time.stability}
Let $ I $ be an interval with $ 0 \in I $. Suppose $ \tilde{u} : I \times \mb{R} \to \mb{R} $ is a solution to 
\begin{align} \label{approx.kdv}
(\partial_t + \partial_x^3) \tilde{u} + \partial_x(\tilde{u}^5) &=e, \\
 \tilde{u}(0,x) &=\tilde{u}_0(x), \nonumber
\end{align}
for some function $ e $ such that 
$$
\vn{\tilde{u}}_{L^{\infty}_t L^2_x(I \times \mb{R})} \leq M, 
$$
for some $ M > 0 $. Let $ u_0 $ be such that 
$$
\vn{u_0 - \tilde{u}_0}_{L^2_x} \leq M',
$$ 
for some $ M' \geq 0 $. Assume the smallness conditions
\begin{align}
\vn{\tilde{u}}_{L^5_x L^{10}_t(I \times \mb{R})   } & \leq \ep_0, \\
\vn{ e^{-t \partial_x^3} (u_0 - \tilde{u}_0) }_{L^5_x L^{10}_t(I \times \mb{R}) } & \leq \ep, \\
\vn{\vm{\pt_x}^{-1} e }_{L^1_x L^{2}_t(I \times \mb{R}) } & \leq \ep,
\end{align}
for some small $ 0< \ep< \ep_0=\ep_0(M,M') $. Then there exists a solution $ u:I \times \mb{R} \to \mb{R} $ to \eqref{1.1} with initial data $ u(0)=u_0 $ satisfying 
\begin{align}
\vn{u-\tilde{u}}_{ L^5_x L^{10}_t(I \times \mb{R})  } + \vn{u^5-\tilde{u}^5}_{L^1_x L^{2}_t(I \times \mb{R})} & \ls \ep ,\\
 \vn{u-\tilde{u}}_{L^{\infty}_t L^2_x(I \times \mb{R})} +\vn{\vm{\pt_x}^{1/6} ( u-\tilde{u})}_{ L^6_{t,x} (I \times \mb{R})}  & \ls M'+\ep. 
\end{align}
\end{lemma}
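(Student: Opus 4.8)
The statement to prove is Lemma~\ref{short.time.stability}, the short time stability lemma for the generalized KdV equation. Here is how I would approach it.

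\medskip

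\textbf{Overall strategy.} The plan is a standard perturbative fixed-point/bootstrap argument run on the difference $w = u - \tilde u$. First I would set up the integral equation satisfied by $w$. Since $\tilde u$ solves the forced equation \eqref{approx.kdv} and $u$ is to solve \eqref{1.1}, the difference obeys $(\partial_t + \partial_x^3) w = -\partial_x(u^5 - \tilde u^5) - e$ with $w(0) = u_0 - \tilde u_0$, so by Duhamel
$$
w(t) = e^{-t\partial_x^3}(u_0-\tilde u_0) - \int_0^t e^{-(t-\tau)\partial_x^3}\partial_x\big((\tilde u + w)^5 - \tilde u^5\big)(\tau)\,d\tau + \int_0^t e^{-(t-\tau)\partial_x^3} e(\tau)\,d\tau.
$$
I would then recall the full suite of Kenig--Ponce--Vega-type Strichartz and smoothing estimates for the Airy propagator: the homogeneous estimates controlling $\vn{e^{-t\partial_x^3}f}$ in $L^5_xL^{10}_t$, $L^\infty_tL^2_x$, and $\vn{\vm{\pt_x}^{1/6}e^{-t\partial_x^3}f}_{L^6_{t,x}}$ by $\vn{f}_{L^2_x}$, together with the inhomogeneous/dual estimates that let one bound $\vn{\int_0^t e^{-(t-\tau)\partial_x^3}\partial_x F\,d\tau}$ in those same norms by $\vn{\vm{\pt_x}^{-1}\partial_x F}_{L^1_xL^2_t} = \vn{F}_{L^1_xL^2_t}$ (modulo Hilbert-transform boundedness), and the gain-of-derivative local smoothing that handles the $\vn{\vm{\pt_x}^{-1}e}_{L^1_xL^2_t}$ forcing term. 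These are the "tools developed in \cite{killip2009mass}" that I am entitled to cite.

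\medskip

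\textbf{Key steps.} (i) Define the iteration space $X(I)$ with norm $\vn{w}_X = \vn{w}_{L^5_xL^{10}_t} + \vn{w^5 + \text{(cross terms)}}_{L^1_xL^2_t} + \vn{w}_{L^\infty_tL^2_x} + \vn{\vm{\pt_x}^{1/6}w}_{L^6_{t,x}}$ on $I\times\mb R$. (ii) Estimate the nonlinear Duhamel term: expand $(\tilde u + w)^5 - \tilde u^5 = \sum_{k=1}^5 \binom{5}{k}\tilde u^{5-k}w^k$ and bound each term in $L^1_xL^2_t$ using Hölder in the $x$-variable together with the $L^5_xL^{10}_t$ control of $\tilde u$ (which is $\le \ep_0$) and of $w$ (which is part of the $X$-norm). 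The term with five factors of $\tilde u$ and no $w$ does not appear in the difference, which is the whole point; every term carries at least one power of $w$, so each contributes a factor that is small provided $\vn{w}_X$ is small, \emph{except} the terms $\tilde u^4 w$ which carry four factors of $\tilde u$, contributing $\vn{\tilde u}_{L^5_xL^{10}_t}^4 \lesssim \ep_0^4$ — here I would need $\ep_0 = \ep_0(M)$ (and $M'$, to control lower Strichartz norms of $\tilde u$ by interpolation) chosen small enough to absorb this into the left-hand side. (iii) Estimate the forcing contribution from $e$ by $\vn{\vm{\pt_x}^{-1}e}_{L^1_xL^2_t} \le \ep$ and the free evolution by $\vn{e^{-t\partial_x^3}(u_0-\tilde u_0)}_{L^5_xL^{10}_t}\le\ep$ for the Strichartz component and by $\vn{u_0-\tilde u_0}_{L^2_x}\le M'$ for the $L^\infty_tL^2_x$ component. (iv) Conclude via contraction mapping (or equivalently a continuity/bootstrap argument on $\vn{w}_X$) that a solution $w$ exists on $I$ with $\vn{w}_X \lesssim \ep$ for the scattering-type norms and $\vn{w}_{L^\infty_tL^2_x} + \vn{\vm{\pt_x}^{1/6}w}_{L^6_{t,x}} \lesssim M' + \ep$, which after setting $u = \tilde u + w$ yields exactly the claimed bounds; the bound on $\vn{u^5 - \tilde u^5}_{L^1_xL^2_t}$ comes out of the same nonlinear estimate in step (ii).

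\medskip

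\textbf{Main obstacle.} The technical heart is the nonlinear estimate in step (ii): correctly distributing the mixed $L^p_xL^q_t$ norms across the five-fold product so that the Airy smoothing/Strichartz estimates close, and in particular making sure the "bad" term $\tilde u^4 w$ (four copies of the fixed small-but-not-arbitrarily-small profile $\tilde u$) is handled by choosing $\ep_0$ small depending on $M, M'$ rather than being absorbed by smallness of $\ep$. One must also be careful that the threshold $\ep_0$ genuinely depends only on $M$ and $M'$ and not on the solution itself, so that the lemma can later be iterated over a partition of a long time interval; this requires controlling all the auxiliary Strichartz norms of $\tilde u$ on $I$ purely in terms of $\vn{\tilde u}_{L^\infty_tL^2_x}\le M$, $\vn{\tilde u}_{L^5_xL^{10}_t}\le\ep_0$, and (for the difference of initial data) $M'$, via interpolation and the smallness hypotheses — a point that is essentially bookkeeping but is where an incautious argument would introduce an illegitimate dependence. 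Since all of this is precisely Lemma~3.3 of \cite{killip2009mass}, I would present the proof as an adaptation of that argument, emphasizing only the points specific to the difference equation.
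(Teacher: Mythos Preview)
The paper does not prove this lemma at all: it is stated with the citation \cite{killip2009mass} (Lemma 3.3) and used as a black box, with no proof given in the body of the paper. Your outlined argument --- Duhamel for the difference $w=u-\tilde u$, Kenig--Ponce--Vega Strichartz/smoothing estimates, H\"older on the quintic difference $(\tilde u+w)^5-\tilde u^5$, and a bootstrap closing via $\vn{\tilde u}_{L^5_xL^{10}_t}\le\ep_0$ --- is exactly the standard proof and is what the cited reference carries out, so your proposal is correct and consistent with the paper's (trivial) treatment.
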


Iterating this lemma over small intervals also a long-time stability result can be obtained, see \cite[Theorem 3.1]{killip2009mass}. To keep track of the number of small intervals one uses the following bound. 

\begin{lemma} \label{num.intervals}
Let $ v \in L^5_x L^{10}_t(J \times \mb{R}) $ for an interval $ J $. Divide $ J $ into $ N $ intervals $ [t_k,t_{k+1}] $ such that $ \vn{v}_{ L^5_x L^{10}_t([t_k,t_{k+1}] \times \mb{R})} \simeq \ep_0 $ for every $ 1 \leq k \leq N-1  $ and $ \vn{v}_{ L^5_x L^{10}_t([t_N,t_{N+1}] \times \mb{R})} \ls \ep_0 $, for a fixed $ \ep_0 > 0$. Then the number of intervals $ N $ is finite and $ N \ls (1+ \vn{v}_{ L^5_x L^{10}_t ( J \times \mb{R})} / \ep_0 )^{10} $. 
\end{lemma}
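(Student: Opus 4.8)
\textbf{Proof plan for Lemma \ref{num.intervals}.}

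The plan is to exploit the fact that the norm $ \vn{\cdot}_{L^5_x L^{10}_t} $ is built from an outer $ \ell^5 $-type (really $ L^5_x $) aggregation of quantities that are genuinely additive in the time variable once raised to the power $ 10 $. First I would observe that for the inner time integral, if $ J = \bigsqcup_{k=1}^{N+1} [t_k, t_{k+1}] $ is a partition, then for each fixed $ x $,
\begin{equation}\label{eq:time-add}
\int_J |v(t,x)|^{10} \dd t = \sum_{k=1}^{N+1} \int_{t_k}^{t_{k+1}} |v(t,x)|^{10} \dd t,
\end{equation}
so that, writing $ F_k(x) := \big(\int_{t_k}^{t_{k+1}} |v(t,x)|^{10}\dd t\big)^{1/10} $ and $ F(x) := \big(\int_J |v(t,x)|^{10}\dd t\big)^{1/10} $, we have the pointwise identity $ F(x)^{10} = \sum_k F_k(x)^{10} $ and hence $ F_k(x) \leq F(x) $ for every $ k $ and every $ x $. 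Then $ \vn{v}_{L^5_x L^{10}_t([t_k,t_{k+1}]\times\mb{R})} = \vn{F_k}_{L^5_x} $ and $ \vn{v}_{L^5_x L^{10}_t(J\times\mb{R})} = \vn{F}_{L^5_x} $.

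The key step is to convert the assumption $ \vn{F_k}_{L^5_x} \simeq \ep_0 $ for $ 1 \leq k \leq N-1 $ into a bound on $ N $ via the superadditivity of $ t\mapsto t^{10/5} = t^2 $, i.e.\ of raising to a power $ \geq 1 $. Concretely, since $ F(x)^{10} = \sum_{k=1}^{N+1} F_k(x)^{10} $, applying the elementary inequality $ \big(\sum_k a_k\big)^{1/2} \geq c \sum_k a_k^{1/2} $—which fails in general, so instead I would argue in the correct direction: raise to the power $ 1/2 $ \emph{inside} the $ x $-integral is wrong, so the clean route is to use that the map $ G \mapsto \int_\mb{R} G(x)^{5/10}\dd x $ applied to $ G = F_k^{10} $ gives $ \vn{F_k}_{L^5_x}^5 = \int F_k^5 $, and then note $ \sum_k F_k(x)^5 \leq \big(\sum_k F_k(x)^{10}\big)^{1/2} \cdot N_x^{1/2} $ is again the wrong way. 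The actually working estimate is the reverse: $ \big(\sum_k b_k\big)^{1/2} \leq \sum_k b_k^{1/2} $ for $ b_k \geq 0 $, hence with $ b_k = F_k(x)^{10} $,
\begin{equation}\label{eq:pointwise-sum}
F(x)^5 = \Big(\sum_{k=1}^{N+1} F_k(x)^{10}\Big)^{1/2} \leq \sum_{k=1}^{N+1} F_k(x)^5.
\end{equation}
Integrating \eqref{eq:pointwise-sum} in $ x $ gives $ \vn{v}_{L^5_x L^{10}_t(J)}^5 \leq \sum_{k=1}^{N+1} \vn{v}_{L^5_x L^{10}_t([t_k,t_{k+1}])}^5 $, which is the wrong direction for lower-bounding $ N $. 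So the honest argument must instead run through the \emph{inner} exponent: write $ A := \vn{v}_{L^5_x L^{10}_t(J)} $ and, using \eqref{eq:time-add} together with the triangle inequality for $ \vn{\cdot}_{L^{5}_x} $ applied to the pieces, plus the \emph{reverse} super-additivity of $ s \mapsto s^2 $ on the outer side, obtain $ A^2 \gtrsim \sum_{k} \vn{F_k}_{L^{5}_x}^{2} $ — this is the genuinely correct mechanism, and it is exactly the content of the claim in \cite[Theorem 3.1]{killip2009mass}. Granting $ A^2 \gtrsim \sum_{k=1}^{N-1} \vn{F_k}_{L^5_x}^2 \gtrsim (N-1)\ep_0^2 $, we conclude $ N - 1 \lesssim A^2/\ep_0^2 $, hence $ N \lesssim 1 + (A/\ep_0)^2 \lesssim (1 + A/\ep_0)^{2} \leq (1 + A/\ep_0)^{10} $, which is the stated bound (the exponent $ 10 $ is generous but harmless, and matches the normalization used downstream when iterating Lemma \ref{short.time.stability}).

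The main obstacle, and the point deserving care, is precisely establishing the superadditivity estimate $ A^{2} \gtrsim \sum_k \vn{F_k}_{L^5_x}^{2} $, since $ L^5_x L^{10}_t $ is a non-trivially nested norm and neither exponent alone makes the pieces additive. The clean way I would carry this out is: by \eqref{eq:time-add}, $ F(x) = \vn{(F_k(x))_k}_{\ell^{10}_k} \geq \vn{(F_k(x))_k}_{\ell^{\infty}_k} $ and also $ F(x) \geq \vn{(F_k(x))_k}_{\ell^{p}_k}\cdot N^{1/10-1/p} $-type bounds; combining $ \vn{v}_{L^5_x L^{10}_t(J)}^{10} = \vn{F^{10}}_{L^{5/10}_x}^{?} $ is dimensionally awkward, so instead I would decompose $ J $ greedily and use only two facts: (i) the monotonicity $ F_k(x) \leq F(x) $ pointwise, so $ \vn{F_k}_{L^5_x} \leq A $ always — consistent with $ \vn{F_k}_{L^5_x}\simeq\ep_0 $ forcing $ \ep_0 \lesssim A $; and (ii) a disjointification: since the time intervals are disjoint, by interpolation between $ L^5_x L^{10}_t $ and $ L^5_x L^\infty_t $ bounds or, more simply, by the embedding $ \ell^{10}_k \hookrightarrow \ell^{?}_k $ one gets $ \sum_k \vn{F_k}_{L^5_x}^{q} \lesssim A^{q} $ for a suitable $ q \leq 10 $; taking $ q = 10 $ and using $ \vn{F_k}_{L^5_x} \simeq \ep_0 $ for $ 1 \leq k \leq N-1 $ yields $ (N-1)\ep_0^{10} \lesssim A^{10} $, i.e.\ $ N \lesssim 1 + (A/\ep_0)^{10} \lesssim (1+A/\ep_0)^{10} $. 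This last route — pushing everything to the power $ 10 $, where the disjoint time supports make the contributions genuinely summable against the single global norm raised to the $ 10 $th power — is the one that cleanly produces the stated exponent, and it is the step I would write out in full detail.
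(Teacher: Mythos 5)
You ultimately land on the correct mechanism --- push everything to the tenth power, where the disjoint time intervals make the pieces sum against $\|v\|_{L^5_x L^{10}_t(J)}^{10}$ --- and this is precisely the argument in the reference the paper cites for this lemma, so your final bound and exponent are right. However, the write-up is a chain of retracted false starts, and the one step that actually carries the lemma, namely the superadditivity
\begin{equation*}
\sum_{k=1}^{N}\,\|v\|_{L^5_x L^{10}_t([t_k,t_{k+1}]\times\mathbb{R})}^{10} \ \leq\ \|v\|_{L^5_x L^{10}_t(J\times\mathbb{R})}^{10},
\end{equation*}
is never actually justified: you wave at ``interpolation'' and at an ``embedding $\ell^{10}_k \hookrightarrow \ell^{?}_k$'' and even misstate the exponent range as $q\leq 10$. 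Neither interpolation nor an $\ell^p$--embedding alone produces this; the decisive tool is Minkowski's inequality for mixed norms. With $F_k(x) := \bigl(\int_{t_k}^{t_{k+1}} |v(t,x)|^{10}\,dt\bigr)^{1/10}$ and $F(x) := \bigl(\int_J |v(t,x)|^{10}\,dt\bigr)^{1/10}$, time-additivity of $\int |v|^{10}\,dt$ gives $F(x)=\|(F_k(x))_k\|_{\ell^{10}_k}$ pointwise, and since $10\geq 5$ Minkowski allows the $\ell^{10}_k$ norm to be pulled outside $L^5_x$:
\begin{equation*}
\Bigl(\sum_{k}\|F_k\|_{L^5_x}^{10}\Bigr)^{1/10} = \bigl\|\,\|F_k\|_{L^5_x}\bigr\|_{\ell^{10}_k} \ \leq\ \bigl\|\,\|F_k(x)\|_{\ell^{10}_k}\bigr\|_{L^5_x} = \|F\|_{L^5_x} = \|v\|_{L^5_x L^{10}_t(J\times\mathbb{R})}.
\end{equation*}
Then $(N-1)\,\epsilon_0^{10} \lesssim \sum_{k=1}^{N-1}\|F_k\|_{L^5_x}^{10} \leq \|v\|_{L^5_x L^{10}_t(J)}^{10}$ gives the stated bound. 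You should write this step out explicitly with Minkowski named; it is the entire content of the lemma, and leaving it at the level of ``interpolation or an embedding'' is not a proof.
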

\begin{proof}
See \cite[Thm 3.1 -first part]{killip2009mass}
\end{proof}

As a consequence, one has

\begin{corollary}[Uniformly continuous dependence on initial data] \label{sol.conv}
Consider solutions $ v \in L^5_x L^{10}_t(J \times \mb{R}) $ to \eqref{1.1}. For every $ \ep > 0 $ there exists $ \delta = \delta(\ep, \vn{v(0)}_{L^2_x} ,  \vn{v}_{ L^5_x L^{10}_t ( J \times \mb{R})} )$ such that if $ \vn{u_0-v(0)}_{L^2_x} \leq \delta $, then there exists a solution to \eqref{1.1} defined on $ J $, with initial data $ u(0)=u_0 $ such that 
$$
\vn{u-v}_{ L^{\infty}_t L^2_x \cap L^5_x L^{10}_t(J \times \mb{R}) } \leq \ep. 
$$
\end{corollary}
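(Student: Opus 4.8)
The plan is to iterate the short-time stability Lemma \ref{short.time.stability}, using $ v $ itself as the approximate solution (so the error term $ e $ vanishes), over the finite partition of $ J $ furnished by Lemma \ref{num.intervals}, while tracking how the $ L^2_x $ discrepancy between $ u $ and $ v $ propagates from one subinterval to the next.

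First I would set $ M_0 \defeq \vn{v(0)}_{L^2_x} $; since mass is conserved, $ \vn{v}_{L^\infty_t L^2_x(J \times \mb{R})} = M_0 $, so $ M_0 $ may serve as the constant $ M $ in Lemma \ref{short.time.stability} on every subinterval. Taking the threshold $ \ep_0(M,M') $ there to be non-increasing in $ M' $, as one may, set $ \ep_\ast \defeq \ep_0(M_0,1) > 0 $, so the lemma applies with the uniform smallness threshold $ \ep_\ast $ whenever the data difference is at most $ 1 $. Then Lemma \ref{num.intervals} splits $ J $ into $ \sim N $ consecutive intervals $ J_0, \dots, J_{N-1} $ with $ \vn{v}_{L^5_x L^{10}_t(J_k \times \mb{R})} \ls \ep_\ast $ on each, where $ N \ls (1 + \vn{v}_{L^5_x L^{10}_t(J \times \mb{R})}/\ep_\ast)^{10} $ depends only on $ M_0 $ and $ \vn{v}_{L^5_x L^{10}_t(J \times \mb{R})} $.

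Next I would induct on $ k $. Suppose $ u $ has been constructed on $ J_0 \cup \dots \cup J_{k-1} $, and let $ \mu_k \defeq \vn{(u - v)(t_k)}_{L^2_x} $, where $ t_k $ is the left endpoint of $ J_k $, so that $ \mu_0 = \vn{u_0 - v(0)}_{L^2_x} \leq \delta $. On $ J_k $ I apply Lemma \ref{short.time.stability} with approximate solution $ v|_{J_k} $, $ M' = \mu_k $ (arranged below to be $ \leq 1 $), and $ e = 0 $: by the standard space--time estimate $ \vn{e^{-t\pt_x^3} f}_{L^5_x L^{10}_t(\mb{R}^2)} \ls \vn{f}_{L^2_x} $ for the Airy propagator (a consequence of the dispersive bound, see \cite{KenigPonceVega}), one has $ \vn{e^{-(t-t_k)\pt_x^3}(u(t_k) - v(t_k))}_{L^5_x L^{10}_t(J_k \times \mb{R})} \ls \mu_k $, so the smallness hypotheses hold once $ \mu_k \ls \ep_\ast $. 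The conclusion then furnishes a solution on $ J_k $ with $ \vn{u-v}_{L^5_x L^{10}_t(J_k \times \mb{R})} \ls \mu_k $ and $ \vn{u-v}_{L^\infty_t L^2_x(J_k \times \mb{R})} \ls \mu_k $; in particular $ \mu_{k+1} \leq C_0 \mu_k $ for some fixed $ C_0 > 1 $ depending only on $ M_0 $. Gluing at the common endpoints, via uniqueness of the maximal-lifespan solution, yields $ u $ on all of $ J $.

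Iterating gives $ \mu_k \leq C_0^N \delta $ for every $ k $, so the running requirements ($ \mu_k \leq 1 $ and $ \mu_k \ls \ep_\ast $) are satisfied as soon as $ \delta \ls C_0^{-N}\ep_\ast $. Summing the $ L^5_x L^{10}_t $ estimates over the $ \sim N $ pieces, using subadditivity of $ s \mapsto s^{1/2} $, and taking the supremum of the $ L^\infty_t L^2_x $ estimates, I would obtain
\[
\vn{u-v}_{L^\infty_t L^2_x \cap L^5_x L^{10}_t(J \times \mb{R})} \ls N^{1/5} C_0^N \delta .
\]
Since $ N $, $ C_0 $, and $ \ep_\ast $ depend only on $ \vn{v(0)}_{L^2_x} $ and $ \vn{v}_{L^5_x L^{10}_t(J \times \mb{R})} $, choosing $ \delta = \delta(\ep, \vn{v(0)}_{L^2_x}, \vn{v}_{L^5_x L^{10}_t(J \times \mb{R})}) $ small enough completes the argument. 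The one delicate point is that the discrepancy is amplified by a factor exponential in the number $ N $ of subintervals, which is what forces $ \delta $ to be exponentially small in $ N $; but $ N $ is finite and fixed by the data, so this is a bookkeeping matter rather than a genuine obstacle --- the proof is the standard long-time-stability iteration specialized to $ \tilde u = v $.
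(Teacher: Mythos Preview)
Your proposal is correct and is precisely the approach the paper intends: the corollary is stated immediately after Lemma \ref{short.time.stability} and Lemma \ref{num.intervals} with the words ``As a consequence, one has,'' and the paper elsewhere refers to \cite[Theorem 3.1]{killip2009mass} for the long-time stability obtained by iterating the short-time lemma over the partition --- exactly the argument you have written out. Your bookkeeping (fixing $M'=1$ to freeze $\ep_\ast$, the Strichartz bound $\vn{e^{-t\pt_x^3}f}_{L^5_xL^{10}_t}\ls\vn{f}_{L^2_x}$ to verify the smallness hypothesis, the geometric growth $\mu_{k+1}\le C_0\mu_k$, and the $N^{1/5}$ factor from the subadditivity of $s\mapsto s^{1/2}$ in summing $L^5_xL^{10}_t$ over subintervals) is all in order.
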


Finally, we can use stability to prove a compactness property for the the transformations associated to solutions that are $ \delta$-close to $ Q $. 

\begin{lemma} \label{group.compactness}
There exists $ \delta>0 $ such that the following statement holds. Let $ u:I \times \mb{R} \to \mb{R} $ be a strong solution to \eqref{1.1} such that
\be \label{epp.close}
\vn{ g(t) u(t)- Q}_{L^2_x} \leq \delta, \qquad \forall \ t \in I, 
\ee
with $ g(0)=g_{0,1} $ (the identity), $ 0 \in I $, $ g(t) \in G $. Then for any $ t \in I $ there exists a compact set $ K_t  $ depending only on $ \vm{t}, \ \vn{u}_{ L^5_x L^{10}_t([0,t] \times \mb{R})} $ and $ M(u) $ such that $ g(t) \in K_t $.  
\end{lemma}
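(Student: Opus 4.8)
The plan is to run a continuity (bootstrap) argument over $[0,t]$: partition the interval into finitely many pieces on each of which $\vn{u}_{L^5_xL^{10}_t}$ is small, compare $u$ on each piece with an \emph{exact} rescaled soliton via the short-time stability Lemma \ref{short.time.stability}, and then convert the resulting $L^2_x$-proximity into control of the group elements $g(s)$ using a rigidity property of $Q$ under $G$. The reason the statement can hold with a \emph{universal} $\delta$ is that on each piece the stability lemma is invoked exactly once, with ``mass'' parameter $\simeq\vn{Q}_{L^2_x}$ and error parameter $\delta$ — so its threshold $\ep_0$ is universal and no constant is ever iterated — and the hypothesis \eqref{epp.close} is used afresh at the left endpoint of each piece rather than propagated; the growth over the many pieces enters only into the \emph{size} of $K_t$.

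I would first record the rigidity fact: there are $\delta_1>0,\ C>0$ such that if $h\in G$ and $\vn{hQ-Q}_{L^2_x}\leq 4\delta\leq\delta_1$, then $h$ lies within distance $C\delta$ of the identity in $G\cong\mb{R}\times(0,\infty)$. This holds because $h\mapsto hQ$ is a smooth proper injective immersion of $G$ into $L^2_x$ whose differential at the identity is injective (the generators $\pt_x Q$ and $(x\pt_x+\tfrac12)Q$ are linearly independent in $L^2_x$, being a nonzero odd and a nonzero even function), while $\inf\{\vn{hQ-Q}_{L^2_x}:\ \mathrm{dist}_G(h,\mathrm{id})\geq r\}>0$ for every $r>0$ (by compactness of annuli together with $\vn{hQ-Q}_{L^2_x}\to\sqrt2\,\vn{Q}_{L^2_x}$ as $h\to\infty$); hence a small value of $\vn{hQ-Q}_{L^2_x}$ forces $h$ into the regime where $\vn{hQ-Q}_{L^2_x}\gtrsim\mathrm{dist}_G(h,\mathrm{id})$. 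Using this twice, I would reduce to the case where $s\mapsto g(s)$ is the continuous choice of near-minimizers of $g\mapsto\vn{gu(s)-Q}_{L^2_x}$: the hypotheses $g(0)=\mathrm{id}$, $\vn{u(0)-Q}_{L^2_x}\leq\delta$ pin this choice near the identity at $s=0$, and at any later $s$ it differs from the given $g(s)$ by an element of $G$ within $O(\delta)$ of the identity, so it suffices to bound the continuous choice.

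For $t>0$ (the case $t<0$ is symmetric), partition $[0,t]$ into $J_k=[t_k,t_{k+1}]$, $0=t_0<\dots<t_N=t$, with $\vn{u}_{L^5_xL^{10}_t(J_k\times\mb{R})}\simeq\ep_0/2$ for $k<N-1$; Lemma \ref{num.intervals} gives $N\lesssim(1+\vn{u}_{L^5_xL^{10}_t([0,t]\times\mb{R})}/\ep_0)^{10}$. Inductively suppose $g(t_k)$ lies in a compact $K_k\subset G$, with $K_0$ a fixed $O(\delta)$-neighborhood of the identity, and let $w_k$ solve \eqref{1.1} with $w_k(t_k)=g(t_k)^{-1}Q$ — an exact rescaled soliton, global in time, of constant scale, written $w_k(s)=g_k(s)^{-1}Q$ with $g_k(t_k)=g(t_k)$. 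Since $\vn{u(t_k)-w_k(t_k)}_{L^2_x}=\vn{g(t_k)u(t_k)-Q}_{L^2_x}\leq\delta$, a standard continuity argument in the right endpoint $t_{k+1}$ together with Lemma \ref{short.time.stability} (applied with $e\equiv0$, $M\simeq\vn{Q}_{L^2_x}$, $M'=\delta$, and $\vn{e^{-s\pt_x^3}(u(t_k)-w_k(t_k))}_{L^5_xL^{10}_t}\lesssim\delta$ by Strichartz) yields, for universally small $\delta$, both $\vn{w_k}_{L^5_xL^{10}_t(J_k\times\mb{R})}\leq\ep_0$ and $\vn{u-w_k}_{L^\infty_sL^2_x(J_k\times\mb{R})}\lesssim\delta$. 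Then for every $s\in J_k$, $u(s)$ is within $O(\delta)$ in $L^2_x$ of both $g_k(s)^{-1}Q$ and $g(s)^{-1}Q$, so the rigidity fact gives $\mathrm{dist}_G(g(s),g_k(s))\lesssim\delta$. As $w_k$ is an explicit soliton, $g_k(s)$ stays within $\lambda(t_k)^2|J_k|$ of $g(t_k)$ in $G$ over $J_k$, and $\vn{w_k}_{L^5_xL^{10}_t(J_k)}\leq\ep_0$ controls $\lambda(t_k)^3|J_k|$ purely in terms of $\ep_0$ and $Q$; hence one may take $K_{k+1}$ to be the $\bigl(\lambda(t_k)^2|J_k|+O(\delta)\bigr)$-enlargement of $K_k$, closing the induction. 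Finally $\sum_k\lambda(t_k)^3|J_k|\approx\int_0^t\lambda(s)^3\,ds\lesssim N\ep_0^5\lesssim\vn{u}_{L^5_xL^{10}_t([0,t])}^{10}\ep_0^{-5}$ and, by Hölder, $\sum_k\lambda(t_k)^2|J_k|+O(N\delta)\lesssim\bigl(\int_0^t\lambda^3\bigr)^{2/3}|t|^{1/3}+O(N\delta)$, so $K_t:=K_N$ (and its $O(\delta)$-enlargement, for the originally given $g$) is compact with size bounded in terms of $\delta$, $N$, $\vn{Q}_{L^2_x}\simeq M(u)^{1/2}$ and $|t|$ — i.e.\ in terms of $|t|$, $\vn{u}_{L^5_xL^{10}_t([0,t]\times\mb{R})}$ and $M(u)$ only.

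The step I expect to be the main obstacle is making the inductive comparison genuinely non-circular: the partition must be chosen so that the \emph{comparison soliton} $w_k$, not merely $u$, has scattering norm $\leq\ep_0$ on $J_k$, yet $w_k$ is built from $g(t_k)$, which is only controlled a posteriori. This is handled in the usual way, defining $t_{k+1}$ as the maximal time for which $\vn{u}_{L^5_xL^{10}_t([t_k,t_{k+1}])}\leq\ep_0/2$ and the estimate $\vn{u-w_k}_{L^\infty_sL^2_x}\lesssim\delta$ both persist and then bootstrapping $\vn{w_k}_{L^5_xL^{10}_t}\leq\vn{u}_{L^5_xL^{10}_t}+O(\delta)<\ep_0$. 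The other point requiring genuine work is the quantitative rigidity of $Q$ under $G$ — that $L^2_x$-proximity of two soliton profiles forces proximity of their $G$-parameters — which rests on the non-degeneracy of the $G$-orbit of $Q$; everything else is bookkeeping with the explicit soliton.
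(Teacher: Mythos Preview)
Your approach is correct and is essentially the paper's: partition $[0,t]$ by the scattering norm of $u$, compare on each piece with the exact soliton launched from $g(t_k)^{-1}Q$ via Lemma~\ref{short.time.stability}, convert the resulting $L^2$-proximity of two soliton profiles into proximity of their $G$-parameters (your ``rigidity fact'' is exactly what the paper invokes when it says the composed element lies in a small compact neighborhood of the identity), and iterate. The one simplification you missed is that the paper applies Lemma~\ref{short.time.stability} with the roles reversed---taking $\tilde u=u$ (for which the smallness $\|\tilde u\|_{L^5_xL^{10}_t}\le\ep_0$ is exactly the partition condition) and letting the soliton be the solution reconstructed by the lemma---so no a~priori control of $\|w_k\|_{L^5_xL^{10}_t}$ is ever needed and the bootstrap you flag as ``the main obstacle'' simply does not arise.
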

\begin{proof} Without loss of generality suppose $ t>0 $ is fixed. We split $ [0,t] $ into $ N $ intervals as in Lemma \ref{num.intervals} where $ \ep_0 $ is the constant in Lemma \ref{short.time.stability}. Then $ N  $ depends on  $ M(u) $ and $ \vn{u}_{ L^5_x L^{10}_t([0,t] \times \mb{R})}. $

We do an induction argument.  We prove that if the statement holds for $ t_k $ then it also holds for $ s \in [t_k, t_{k+1} ] $. At $ t=0 $ we have $ K_0 = \{ g_{0,1} \}. $

From $ \vn{ u(t_k)- g(t_k)^{-1}  Q}_{L^2_x} \leq \delta $ using Lemma \ref{short.time.stability} we deduce
$$ 
\vn{u(s) - g(t_k)^{-1} Q(\cdot - \frac{s-t_k}{\lmd_k^3} ) }_{L^2} \ls \delta, 
$$
From this and \eqref{epp.close} at $ s $ we find
$$
\vn{Q - g(s) g(t_k)^{-1} g_{(s-t_k)/\lmd_k^3,1} Q}_{L^2} \ls \delta. 
$$
For $ \delta $ small enough, $  g(s) g(t_k)^{-1} g_{(s-t_k)/\lmd_k^3,1}  $ will lie in a small compact neighborhood of the identity parametrized by $ (x,\lmd) \in [-\eta,\eta] \times [r,R] $ for some $ \eta > 0 $ and $ 0<r < 1 <R $. Therefore $ g(s) $ has to be in a compact set. Moreover, denoting $ g(t_k) = g_{x_k, \lmd_k} $, one checks inductively that
$$
\lmd_k \in [r^{k-1},R^{k-1} ], \qquad \vm{x_k} \leq (1+R)^{k-2} \eta + \frac{R^{k-1}}{r^{3 (k-2)}} t_k. 
$$
which implies the stated dependence.
\end{proof}

\subsection{Almost periodicity} \label{sec:almost:per}

As a consequence of the Arzela-Ascoli theorem we know that precompactness of a family of functions in $L_{x}^{2}(\mathbb{R})$ is equivalent to it being bounded in  $L_{x}^{2}(\mathbb{R})$ and the existence of a function $C(\eta) $ so that 
$$
\int_{|x| \geq C(\eta)}|f(x)|^{2} \dd x+\int_{|\xi| \geq C(\eta)}|\hat{f}(\xi)|^{2} \dd \xi \leq \eta \qquad \forall \ \eta >0,
$$
holds for all the functions. Therefore, the almost periodicity condition \eqref{1.4} is equivalent to 
$$
\int_{\vm{x-x(t)} \geq C(\eta)/N(t)} \vm{u(t,x)}^2 \dd x  + \int_{|\xi| \geq C(\eta) N(t)}|\hat{u}(t,\xi)|^{2} \dd \xi \leq \eta \qquad \forall \ \eta >0.
$$

\subsection{The embedding of NLS into gKdV} 

We now review the approximation of solutions to gKdV by certain modulated, rescaled versions of solutions to NLS discussed in \cite{Tao2006TwoRO}, \cite{Christ2003AsymptoticsFM}, \cite{killip2009mass}. 

We cite the following theorem from \cite[Thm. 4.1]{killip2009mass}, which was initially conditional on the global well-posedness and scattering of the focusing NLS below the ground state, which was subsequently proved in \cite{DODSON20151589}.  We will only need this theorem for small data (in which case the existence part is automatic), and specifically we will use the approximations \eqref{smoothapprox}, \eqref{unifapproxu}. Here 
\be \label{unt}
\tilde{u}_{n}^T(t, x) \defeq \left\{\begin{array}{ll}
\operatorname{Re}\left[ e^{i x \xi_{n} \lmd_n +i t (\xi_{n} \lmd_n)^{3}}  V_{n}\left(3 \xi_{n} \lmd_n t, x+3 (\xi_{n} \lmd_n)^{2} t\right)\right], & \text { when }|t| \leq \frac{T}{3 \xi_{n} \lmd_n} \\
\exp \left\{-\left(t- \frac{T}{3 \xi_{n} \lmd_n}\right) \partial_{x}^{3}\right\} \tilde{u}_{n}\left(\frac{T}{3 \xi_{n} \lmd_n}\right), & \text { when } t>\frac{T}{3 \xi_{n} \lmd_n} \\
\exp \left\{-\left(t+ \frac{T}{3 \xi_{n} \lmd_n}\right) \partial_{x}^{3}\right\} \tilde{u}_{n}\left(- \frac{T}{3 \xi_{n} \lmd_n}\right), & \text { when } t<-\frac{T}{3 \xi_{n} \lmd_n}
\end{array}\right.
\ee
is defined in terms of certain frequency-localized solutions $ V_n $ and $ V $ to NLS such that 
\be \label{VnV}
\vn{V_n- V}_{L_{t}^{\infty} L_{x}^{2}(\mathbb{R} \times \mathbb{R})} \to 0. 
\ee

\begin{theorem}[Oscillatory profiles \cite{killip2009mass}] \label{oscilatory.profiles}
Let $\phi \in L_{x}^{2}$ with $M(\phi)<2 \sqrt{\frac{6}{5}} M(Q)$. Let $  (\lmd_{n})_{n \geq 1}, (\xi_{n})_{n \geq 1} \subset(0, \infty)$,  with $\xi_{n} \lmd_n \rightarrow \infty$ and let $ (t_{n} )_{n \geq 1} \subset \mathbb{R}$ such that $3 \xi_{n} \lmd_n t_{n}$ converges to some
$T_{0} \in[-\infty, \infty] .$ Then, for $n$ sufficiently large there exists a global solution $ \tilde{v}_{n}$ to \eqref{1.1} with initial data at time $t=t_{n}$ given by
$$
\tilde{v}_{n} \left(t_{n}, x\right)=e^{-t_{n} \partial_{x}^{3}} \operatorname{Re}[ e^{i x \xi_{n} \lmd_n } \phi(x)]
$$
The solution obeys the global spacetime bounds
$$
\vn{ \left|\partial_{x}\right|^{1 / 6} \tilde{v}_{n} }_{L_{t, x}^{6}(\mathbb{R} \times \mathbb{R})}+\left\|\tilde{v}_{n} \right\|_{L_{x}^{5} L_{t}^{10}(\mathbb{R} \times \mathbb{R})} \lesssim_{\phi} 1
$$
and for every $\varepsilon>0$ there exist $n_{\varepsilon} \in \mathbb{N}$ and $\psi_{\varepsilon} \in C_{c}^{\infty}(\mathbb{R} \times \mathbb{R})$ so that, for all $n \geq n_{\varepsilon}$ one has
\be \label{smoothapprox}
\vn{ \tilde{v}_{n} (t, x)-\operatorname{Re} [e^{i x \xi_{n} \lmd_n +i t (\xi_{n} \lmd_n)^{3}} \psi_{\varepsilon} \left(3 \xi_{n} \lmd_n t, x+3 (\xi_{n} \lmd_n)^{2} t\right) ] }_{L_{x}^{5} L_{t}^{10}(\mathbb{R} \times \mathbb{R})} \leq \varepsilon.
\ee
Moreover, defining $ \tilde{u}_{n}^T $ by \eqref{unt}, one has the approximation 
\be \label{unifapproxu}
\lim_{T \to \infty} \lim _{n \rightarrow \infty} \left\|  \tilde{v}_{n}-\tilde{u}_{n}^T \right\|_{L_{t}^{\infty} L_{x}^{2}(\mathbb{R} \times \mathbb{R})} = 0 
\ee
\end{theorem}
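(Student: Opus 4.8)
The plan is to manufacture the genuine gKdV solutions $ \tilde v_n $ by exhibiting a good global \emph{approximate} solution built out of a solution to the focusing mass-critical NLS, and then upgrading with the stability theory of Lemmas \ref{short.time.stability}, \ref{num.intervals} and Corollary \ref{sol.conv}; the three conclusions \eqref{VnV}, \eqref{smoothapprox}, \eqref{unifapproxu} fall out of the construction. For the NLS input, let $ V $ solve the focusing mass-critical NLS with the fixed nonlinearity coefficient produced by the resonant part of the computation below --- equivalently, after a fixed rescaling, equation \eqref{1.5} --- with $ V(0)=\phi $; the hypothesis $ M(\phi) < 2\sqrt{6/5}\,M(Q) $ is precisely the statement that the rescaled datum lies below the ground state, so by \cite{DODSON20151589} $ V $ is global with finite Strichartz norms (when $ \phi $ is small, as in our application, this needs no threshold). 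Fix $ 0<\theta<1/3 $, write $ \beta_n \defeq \xi_n \lmd_n \to \infty $, let $ \phi_n \defeq P_{\leq \beta_n^{\theta}} \phi $ be the Littlewood--Paley truncation, and let $ V_n $ solve the same NLS with $ V_n(0)=\phi_n $ (resp.\ the NLS solution with scattering state $ \phi_n $ when $ 3\xi_n \lmd_n t_n \to \pm\infty $). Since $ \phi_n \to \phi $ in $ L^2_x $, the NLS stability theory supplies uniform Strichartz bounds for $ V_n $ and $ \vn{V_n - V}_{L^\infty_t L^2_x} \to 0 $, which is \eqref{VnV}.

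Now the ansatz. With $ \theta_n(t,x) \defeq x \beta_n + t \beta_n^3 $ and $ W_n(t,x) \defeq V_n(3\beta_n t,\, x + 3\beta_n^2 t) $, the function $ \tilde u_n^T $ of \eqref{unt} equals $ \operatorname{Re}(e^{i\theta_n} W_n) $ on $ \vm{t} \leq T/(3\beta_n) $ and is a free Airy evolution of the endpoint data outside this window. A direct differentiation, in which the $ O(\beta_n^3) $ phase terms and the $ O(\beta_n^2) $ transport terms cancel identically, gives
\[
(\pt_t + \pt_x^3)\tilde u_n^T + \pt_x\big((\tilde u_n^T)^5\big) = 3\beta_n \operatorname{Re}\big(e^{i\theta_n}\mathcal L(V_n)\big) + \operatorname{Re}\big(e^{i\theta_n}[\pt_y^3 V_n + R_n]\big) + \mathcal N_n =: e_n^T
\]
on the window, where $ \mathcal L(V) = i\pt_s V + \pt_y^2 V + \kappa\vm{V}^4 V $ ($ \kappa>0 $ fixed) is the (rescaled) focusing NLS expression, $ R_n $ gathers the sub-leading nonlinear terms in which $ \pt_x $ falls on $ \vm{V_n}^4 V_n $, and $ \mathcal N_n $ is the off-resonant nonlinearity carrying phases $ e^{\pm 3 i \theta_n}, e^{\pm 5 i \theta_n} $ (outside the window the first two groups are absent and $ \mathcal N_n = \pt_x((\tilde u_n^T)^5) $). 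Since $ V_n $ solves $ \mathcal L(V_n)=0 $, the leading $ O(\beta_n) $ term vanishes \emph{exactly}, and it remains to bound $ \vn{\vm{\pt_x}^{-1} e_n^T}_{L^1_x L^2_t} $. The terms $ \pt_y^3 V_n $ and $ R_n $ are localized near frequency $ \beta_n $, so $ \vm{\pt_x}^{-1} $ gains a factor $ \beta_n^{-1} $, and the truncation at $ \beta_n^{\theta} $ with $ \theta<1/3 $ makes their contribution $ O(\beta_n^{3\theta - 1}) \to 0 $. For $ \mathcal N_n $, the operator $ \vm{\pt_x}^{-1}\pt_x $ is a bounded multiplier on the relevant off-zero frequencies, so its contribution is controlled by $ \vn{V_n(3\beta_n t, x + 3\beta_n^2 t)}_{L^5_x L^{10}_t}^5 $; changing variables $ s = 3\beta_n t $ and exploiting the group velocity $ 3\beta_n^2 $ of the packet produces a negative power of $ \beta_n $ times a fixed Strichartz norm of $ V_n $, hence $ \to 0 $. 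Finally the gluing error at $ \vm{t} = T/(3\beta_n) $ together with the scattered tails $ \{\vm{s}>T\} $ is at most $ \omega(T) $ with $ \omega(T)\to 0 $ as $ T\to\infty $ uniformly in $ n $, since $ \tilde u_n^T $ is continuous across the interface and $ V $ has small Strichartz norm there. Hence $ \vn{\vm{\pt_x}^{-1} e_n^T}_{L^1_x L^2_t} \to 0 $ as $ n\to\infty $ for each fixed $ T $, and then $ \to 0 $ as $ T\to\infty $.

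Since $ \tilde u_n^T $ has uniformly bounded mass and (by the final step below) uniformly bounded $ L^5_x L^{10}_t $ norm, Lemma \ref{num.intervals} splits $ \mb R $ into a bounded number of intervals on which Lemma \ref{short.time.stability} applies; iterating produces, for $ n $ large, a genuine global solution $ \tilde v_n $ to \eqref{1.1} with $ \vn{\tilde v_n - \tilde u_n^T}_{L^\infty_t L^2_x \cap L^5_x L^{10}_t} \ls M_n' + o_n(1) $, where $ M_n' $ is the data discrepancy at a convenient anchor time. When $ 3\xi_n \lmd_n t_n \to T_0 $ is finite one anchors at $ t=0 $: there $ \tilde u_n^T(0)=\operatorname{Re}(e^{ix\beta_n}\phi_n) $, while tracing the prescribed datum $ \tilde v_n(t_n)=e^{-t_n \pt_x^3}\operatorname{Re}(e^{ix\beta_n}\phi) $ back through Duhamel --- using $ t_n = (T_0 + o(1))/(3\beta_n) \to 0 $ and the smallness of the nonlinearity over $ [t_n,0] $ --- gives $ \tilde v_n(0)=\operatorname{Re}(e^{ix\beta_n}\phi) + o_n(1) $, so $ M_n' \ls \vn{\phi_n - \phi}_{L^2} + o_n(1) \to 0 $; the cases $ T_0 = \pm\infty $ are handled analogously, anchoring instead at a window endpoint and using that $ V_n $ and $ \tilde v_n $ are characterized by their scattering states $ \phi_n $ and $ \operatorname{Re}(e^{ix\beta_n}\phi) $. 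This yields the global spacetime bounds for $ \tilde v_n $, and letting $ n\to\infty $ then $ T\to\infty $ gives \eqref{unifapproxu}. For \eqref{smoothapprox}: having fixed $ T $ large, pick $ \psi_\ep \in C_c^\infty(\mb R \times \mb R) $ supported in the NLS-time window and close to $ V $ in the relevant NLS Strichartz norm (possible since $ V $ has small Strichartz norm on $ \{\vm{s}>T\} $); on the window $ \tilde u_n^T = \operatorname{Re}(e^{i\theta_n} W_n) $ with $ V_n \to V $, and the modulation/rescaling/shear in \eqref{unt} transfers $ L^5_x L^{10}_t $ closeness of the modulated profiles to Strichartz closeness of $ V_n $ and $ \psi_\ep $ (up to a negative power of $ \beta_n $), while off the window both $ \tilde u_n^T $ and the modulated $ \psi_\ep $ are small in $ L^5_x L^{10}_t $; combined with $ \tilde v_n \approx \tilde u_n^T $ this gives \eqref{smoothapprox}, and the same estimate bounds $ \vn{\tilde u_n^T}_{L^5_x L^{10}_t} $ uniformly, closing the loop with the stability step.

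The crux is the error estimate of the second paragraph --- quantifying the two gains simultaneously (the $ \vm{\pt_x}^{-1} $ smoothing against the frequency localization of $ \pt_y^3 V_n $ and $ R_n $, and the smallness of local-in-$x$ norms of a fast wave packet against the off-resonant phases in $ \mathcal N_n $) while managing the one-derivative loss built into the gKdV nonlinearity $ \pt_x(u^5) $, which is exactly what forces the introduction of the truncated profiles $ \phi_n $ and the $ \vm{\pt_x}^{-1} $-weighted error norm in the first place. The gluing/scattered-tail bookkeeping and the initial-data anchoring are comparatively routine given Lemmas \ref{short.time.stability}, \ref{num.intervals} and Corollary \ref{sol.conv}.
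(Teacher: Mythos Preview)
The paper does not prove this theorem: it is quoted verbatim from \cite[Thm.~4.1]{killip2009mass}, with the added remark that \eqref{unifapproxu} is extracted from the proof given there (the statement was conditional in \cite{killip2009mass} and became unconditional after \cite{DODSON20151589}). So there is no ``paper's own proof'' to compare against.

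Your outline is the correct one and is essentially the argument of \cite{killip2009mass} (going back to \cite{Tao2006TwoRO}, \cite{Christ2003AsymptoticsFM}): build the approximate gKdV solution $\tilde u_n^T$ from a frequency-truncated NLS profile $V_n$, observe that the $O(\beta_n)$ piece of the gKdV error is exactly $3\beta_n$ times the focusing mass-critical NLS operator applied to $V_n$ and hence vanishes, control the remaining error in the $\vm{\pt_x}^{-1}$-weighted $L^1_x L^2_t$ norm, and upgrade via stability. A few points where your sketch is a bit loose and the cited proof is more careful: (i) the definition of $V$ (and $V_n$) must depend on the value of $T_0$ --- data at $s=T_0$ when finite, scattering data when $T_0=\pm\infty$ --- you do this for $V_n$ but not explicitly for $V$; (ii) for the off-resonant nonlinearity $\mathcal N_n$, the phrase ``$\vm{\pt_x}^{-1}\pt_x$ is a bounded multiplier'' is only safe because the truncation $P_{\leq \beta_n^\theta}$ localizes those terms near $\pm 3\beta_n,\pm 5\beta_n$, so the multiplier is literally $\pm i$ there --- on all of $L^1_x$ the Hilbert transform is of course unbounded; (iii) the actual smallness of the off-resonant contribution in $L^1_x L^2_t$ comes from the $\beta_n^{-1/2}$ picked up by the time rescaling $s=3\beta_n t$ (together with uniform NLS Strichartz bounds), not from the group velocity per se. None of these affect the correctness of the scheme.
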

We note that \eqref{unifapproxu} is obtained in the proof of  \cite[Thm. 4.1]{killip2009mass}. 

\subsection{The Airy profile decomposition and decoupling}

\begin{definition} \label{asym.orth}
\begin{enumerate}
 \item We say that two sequences $ (\Gamma_n^1)_{n \geq 1} =(\lmd_n^1, \xi_n^1, x_n^1, t_n^1 )_{n \geq 1} $ and $ (\Gamma_n^2)_{n \geq 1} =(\lmd_n^2, \xi_n^2, x_n^2, t_n^2 )_{n \geq 1} $ in $ (0,\infty) \times \mb{R}^3 $ are
asymptotically orthogonal  if
\begin{multline*}
\frac{\lmd_n^1}{\lmd_n^2} + \frac{\lmd_n^2}{\lmd_n^1} + \sqrt{ \lmd_n^1 \lmd_n^2} \vm{\xi_n^1 - \xi_n^2} + \jb{ \lmd_n^1 \xi_n^1 \lmd_n^2 \xi_n^2}^{\frac{1}{2}} 
\vm{\frac{ (\lmd_n^1)^3 t^1_n- (\lmd_n^2)^3 t^2_n }{ (\lmd_n^1 \lmd_n^2 )^{3/2} }  } \\
+ (\lmd_n^1 \lmd_n^2 )^{-\frac{1}{2}} \vm{x_n^1-x_n^2 +\frac{3}{2} [(\lmd_n^1)^3 t^1_n- (\lmd_n^2)^3 t^2_n  ] [ (\xi^1_n)^2 +(\xi^2_n)^2  ]   	}
 \overset{n}{\longrightarrow} \infty. 
\end{multline*}
\item We say that 
$$ (\Gamma_n)_{n \geq 1} =(\lmd_n, \xi_n, x_n, t_n )_{n \geq 1}  \overset{n}{\longrightarrow} \infty, $$ 
if 
$ (\Gamma_n)_{n \geq 1}  $ and $ (1,0,0,0)_{n \geq 1} $ are asymptotically orthogonal , i.e. 
$$
\lmd_n + \frac{1}{ \lmd_n} + \vm{\xi_n} + \vm{t_n} + \vm{x_n}  \overset{n}{\longrightarrow} \infty. 
$$
\end{enumerate}
\end{definition}

Thus one can think of $ \infty $ as an element in the one-point compactification of  $ (0,\infty)\times \mb{R}^3 $. 

\

If $ \Gamma_n^1 =(\lmd_n^1, \xi_n^1, x_n^1, t_n^1 ) $ and  $ \Gamma_n^2 =(\lmd_n^2, \xi_n^2, x_n^2, t_n^2 ) $ are asymptotically orthogonal then 
\be \label{inner.orthog}
\lim_{n \to \infty} \lng g_{x_n^1, \lmd_n^1 } e^{-t^1_n \partial^3_x} [e^{ix \xi_n^1  \lmd_n^1 } \phi ] , \  g_{x_n^2, \lmd_n^2 } e^{-t^2_n \partial^3_x} [e^{ix \xi_n^2  \lmd_n^2 } \varphi ] 
\rng = 0, \qquad \phi, \varphi \in L^2. 
\ee
where either $ \xi_n^j = 0 $ for all $ n \geq 1 $ or $ | \lmd_n^j \xi_n^j | \to \infty $. 
See \cite[Lemma 5.2, 5.1 Cor. 3.7]{shao2009}. 

This implies, in particular, the following statement.  
\begin{lemma} \label{weakgconv}
Let  $ \Gamma_n = (\lmd_n, \xi_n, z_n, s_n ) \to \infty $ and $ \theta_n \in \mathbb{R} $. Then, weakly in $ L^2 $ one has
\be \label{gamma.weak.conv}
e^{i \theta_n}  g_{z_n, \lmd_n} e^{-s_n \partial^3_x} [e^{\pm ix \xi_n  \lmd_n } h ] \rightharpoonup 0,
\ee 
for any $ h \in L^2 $. 
\end{lemma}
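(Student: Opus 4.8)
The plan is to obtain \eqref{gamma.weak.conv} as a special case of the orthogonality relation \eqref{inner.orthog}, taking for the second sequence the constant one $\Gamma_n^2 = (1,0,0,0)$, for which $g_{0,1}$ and $e^{0\cdot\partial_x^3}$ are the identity and $e^{ix\cdot 0}\equiv 1$, so that the second vector in \eqref{inner.orthog} is simply the test function. A few harmless reductions come first: the unimodular factor $e^{i\theta_n}$ never affects whether a sequence of inner products tends to $0$, and replacing $\xi_n$ by $-\xi_n$ alters neither the hypothesis $\Gamma_n \to \infty$ (which, by Definition \ref{asym.orth}(2), only sees $|\xi_n|$) nor the conclusion, so I may assume $\theta_n = 0$ and that the sign is $+$. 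Fixing $\varphi \in L^2$, it then suffices to show that an arbitrary subsequence of $\langle g_{z_n,\lambda_n}e^{-s_n\partial_x^3}[e^{ix\xi_n\lambda_n}h],\varphi\rangle$ admits a further subsequence tending to $0$; passing to such a subsequence, I may also assume that $\lambda_n\xi_n$ converges in $[-\infty,+\infty]$.

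If $|\lambda_n\xi_n|\to\infty$, I would apply \eqref{inner.orthog} directly with $\Gamma_n^1 = \Gamma_n$, $\phi = h$, and $\Gamma_n^2 = (1,0,0,0)$, $\varphi$ as above: the two sequences are asymptotically orthogonal by hypothesis, and the dichotomy built into \eqref{inner.orthog} is met because $\xi_n^2 \equiv 0$ while $|\lambda_n^1\xi_n^1|\to\infty$. This gives $\langle g_{z_n,\lambda_n}e^{-s_n\partial_x^3}[e^{ix\xi_n\lambda_n}h],\varphi\rangle \to 0$ at once. If instead $\lambda_n\xi_n \to c$ for some finite $c$, I would first absorb the modulation into the profile: $e^{ix\lambda_n\xi_n}h \to \tilde h \defeq e^{icx}h$ strongly in $L^2$ by dominated convergence, and since $g_{z_n,\lambda_n}$ and $e^{-s_n\partial_x^3}$ are isometries, $g_{z_n,\lambda_n}e^{-s_n\partial_x^3}[e^{ix\lambda_n\xi_n}h]$ and $g_{z_n,\lambda_n}e^{-s_n\partial_x^3}\tilde h$ differ by an $L^2$-null sequence. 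Because $\lambda_n\xi_n$ is bounded, $|\xi_n| \lesssim \lambda_n^{-1} \le \lambda_n + \lambda_n^{-1}$, so the hypothesis $\Gamma_n\to\infty$ already forces $(\lambda_n,0,z_n,s_n)\to\infty$; hence \eqref{inner.orthog} applies once more, now with $\Gamma_n^1 = (\lambda_n,0,z_n,s_n)$ (whose frequency parameter vanishes for every $n$, so the dichotomy again holds), $\phi = \tilde h$, and $\Gamma_n^2 = (1,0,0,0)$, giving $\langle g_{z_n,\lambda_n}e^{-s_n\partial_x^3}\tilde h,\varphi\rangle \to 0$ and therefore the desired convergence.

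The only step requiring any thought is the treatment of a bounded $\lambda_n\xi_n$: the relation \eqref{inner.orthog} is only stated when each frequency parameter either vanishes identically or diverges, so one cannot feed it a sequence with $\lambda_n\xi_n$ merely bounded. The fix — shifting the oscillation $e^{ix\lambda_n\xi_n}$ onto the fixed profile $h$, which lands one in the branch where $\xi_n\equiv 0$, and checking that a bounded $\lambda_n\xi_n$ cannot be what drives $\Gamma_n$ to $\infty$ — is the crux; everything else is a direct invocation of \eqref{inner.orthog} together with the triviality of the constant second sequence.
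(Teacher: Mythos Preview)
Your proof is correct and follows exactly the approach the paper indicates: the paper simply states that Lemma~\ref{weakgconv} is implied by \eqref{inner.orthog} without further detail, and you have supplied that detail, including the necessary case split to satisfy the dichotomy ``$\xi_n^j\equiv 0$ or $|\lambda_n^j\xi_n^j|\to\infty$'' on which \eqref{inner.orthog} is conditioned. The absorption of a bounded modulation $e^{ix\lambda_n\xi_n}$ into the profile and the observation that boundedness of $\lambda_n\xi_n$ forces $(\lambda_n,0,z_n,s_n)\to\infty$ are exactly the small steps the paper leaves implicit.
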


%
 
\ 

We are ready to state the profile decomposition for the Airy propagator obtained by Shao in \cite{shao2009}.

\begin{lemma} \label{profile.dec}
(Airy linear profile decomposition \cite{shao2009})
Let $ v_n : \mb{R} \to \mb{R} $ be a sequence of functions bounded in $ L^2_x(\mb{R}) $. Then, after passing to a subsequence, there exist functions $ \phi^j: \mb{R} \to \mb{C} $ in $ L^2_x(\mb{R}) $, group elements $ g_n^j := g_{x_n^j, \lmd_n^j } \in G $, frequency parameters $ \xi_n^j \in [0,\infty) $ and times $ t_n^j \in \mb{R} $ such that for all $ J \geq 1 $ one can write 
\be
v_n = \sum_{1 \leq j \leq J}  g^j_n e^{-t^j_n \partial^3_x} \text{Re}[e^{ix \xi_n^j  \lmd_n^j } \phi^j ]  + w_n^J,
\ee
for some real-valued sequence $ w_n^J $ in $ L^2_x(\mb{R}) $ with 
\be
\lim_{J \to \infty} \limsup_{n \to \infty} \vn{| \partial_x |^{1/6}  e^{-t \partial^3_x} w_n^J  }_{L^6_{t,x}(\mb{R} \times \mb{R} )} = \lim_{J \to \infty} \limsup_{n \to \infty} \vn{e^{-t \partial^3_x} w_n^J }_{L^5_x L^{10}_t (\mb{R} \times \mb{R} )}=0.
\ee
For each $ 1 \leq j \leq J $, the frequency parameters $ \xi_n^j  $ satisfy: either $ \xi_n^j =0 $ for all $ n \geq 1 $ or $ \xi_n^j  \lmd_n^j \to \infty $ as $ n \to \infty $ (If $ \xi_n^j =0 $ we assume $ \phi^j $ is real). For any $ J \geq 1 $ one has
\be \label{L2.orthog}
\vn{v_n}_{L^2}^2- \sum_{1 \leq j \leq J} \vn{\text{Re}[e^{ix \xi_n^j  \lmd_n^j } \phi^j ]  }_{L^2}^2 - \vn{w_n^J}_{L^2}^2  \overset{n}{\longrightarrow} 0. 
\ee
The family of sequences $ \Gamma_n^j =(\lmd_n^j, \xi_n^j, x_n^j, t_n^j ) \in (0,\infty) \times \mb{R}^3 $ are pair-wise asymptotically orthogonal in the sense of Definition \ref{asym.orth} and
for any $ 1 \leq j \leq J $ 
\be \label{orthog2}
\lim_{n \to \infty} \lng g^j_n e^{-t^j_n \partial^3_x} \text{Re}[e^{ix \xi_n^j  \lmd_n^j } \phi^j ] , w_n^J \rng = 0.
\ee
\end{lemma}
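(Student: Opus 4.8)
The plan is to follow the standard profile-decomposition scheme adapted to the Airy propagator: iteratively extract ``bubbles of concentration'' from the sequence by an inverse Strichartz inequality, and then show the process exhausts the defect of compactness so that the remainders vanish in the limit. As a preliminary reduction I would pass to a single Strichartz norm: by interpolation between the $L^6_{t,x}$ smoothing estimate and the conserved $L^\infty_t L^2_x$ bound (as in \cite{KenigPonceVega}), one has $\vn{e^{-t\pt_x^3}f}_{L^5_xL^{10}_t}\ls \vn{|\pt_x|^{1/6}e^{-t\pt_x^3}f}_{L^6_{t,x}}^{\theta}\vn{f}_{L^2}^{1-\theta}$ for a suitable $\theta\in(0,1)$, so it suffices to drive the remainder to $0$ in $X(f)\defeq\vn{|\pt_x|^{1/6}e^{-t\pt_x^3}f}_{L^6_{t,x}(\mb{R}\times\mb{R})}$, since the $L^2$ norms stay uniformly bounded throughout.

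The analytic heart is the inverse Strichartz inequality: if $(v_n)$ is bounded in $L^2$ with $\vn{v_n}_{L^2}\le A$ and $\liminf_n X(v_n)\ge\ep$, then, along a subsequence, there exist a frame $\Gamma_n=(\lmd_n,\xi_n,x_n,t_n)$ (with either $\xi_n\equiv 0$ or $\lmd_n\xi_n\to\infty$), phases $\theta_n\in\mb{R}$, and $\phi\in L^2\setminus\{0\}$ such that
\[
e^{i\theta_n}\,e^{-ix\xi_n\lmd_n}\,e^{t_n\pt_x^3}\big(g_{x_n,\lmd_n}^{-1}v_n\big)\rightharpoonup \tfrac12\phi,\qquad \vn{\phi}_{L^2}\gs A\,(\ep/A)^{\beta}
\]
for some $\beta>0$ (here the modulation kills the conjugate half of a real-valued bump, so the weak limit of the real sequence encodes the complex $\phi$). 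I would derive this from Shao's refined Strichartz inequality in \cite{shao2009}, which bounds $X(f)$ by $\vn{f}_{L^2}$ times a power of a supremum over Whitney-type phase-space boxes $Q$ of a localized $L^\infty$-flavored quantity of $P_Q f$; choosing a near-maximizing box $Q_n$, one reads off $(\lmd_n,\xi_n,x_n,t_n)$ from its scale, center frequency, and center, and the normalized, re-centered sequence then has nonzero weak limit because the $L^\infty$-type lower bound survives passing to the limit.

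With this in hand I would iterate. Set $w_n^0=v_n$; given $w_n^{J-1}$, let $\ep_J=\limsup_n X(w_n^{J-1})$. If $\ep_J=0$ we stop (a finite decomposition, with $w_n^{J-1}$ the final error). Otherwise apply the inverse Strichartz inequality to $w_n^{J-1}$, obtaining $\phi^J,\Gamma_n^J$, and put $w_n^J=w_n^{J-1}-g_n^J e^{-t_n^J\pt_x^3}\mathrm{Re}[e^{ix\xi_n^J\lmd_n^J}\phi^J]$. The orthogonality \eqref{orthog2} and the Pythagorean identity \eqref{L2.orthog} follow from the defining weak convergence together with the fact that $g_n^J$, the modulation, and the Airy flow are isometries of $L^2$; summing \eqref{L2.orthog} over $j$ gives $\sum_j\vn{\phi^j}_{L^2}^2\le\liminf_n\vn{v_n}_{L^2}^2$, hence $\vn{\phi^J}_{L^2}\to0$, and the quantitative bound $\vn{\phi^J}_{L^2}\gs A(\ep_J/A)^{\beta}$ forces $\ep_J\to0$, i.e.\ $\lim_{J}\limsup_n X(w_n^J)=0$; the $L^5_xL^{10}_t$ statement then follows from the interpolation reduction. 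For asymptotic orthogonality of the frames, fix $j<k$ and suppose $\Gamma_n^j,\Gamma_n^k$ are \emph{not} asymptotically orthogonal in the sense of Definition \ref{asym.orth}; then along a subsequence the relative frame $(g_n^j)^{-1}g_n^k$, combined with the relevant modulation and Airy propagator, converges strongly to a fixed $T\in\mathcal{B}(L^2)$, so transporting $w_n^{k-1}$ back through the $j$-th frame has weak limit a nonzero multiple of $T^*\phi^k$ (using that $w_n^{k-1}=w_n^j-\sum_{j<\ell<k}(\ell\text{-th profile})$, that each subtracted profile is weakly null in the $j$-th frame by Lemma \ref{weakgconv} and the already-established orthogonality of $\Gamma_n^\ell$ with $\Gamma_n^j$, and that $w_n^j$ is weakly null in the $j$-th frame by construction of $\phi^j$); this contradicts the fact that $w_n^{k-1}$ has weak limit $0$ in the $j$-th frame. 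The only nonroutine computation here is the composition law for the Airy symmetries, whose precise form produces exactly the cross-term $\tfrac32[(\lmd_n^1)^3 t_n^1-(\lmd_n^2)^3 t_n^2][(\xi_n^1)^2+(\xi_n^2)^2]$ in Definition \ref{asym.orth} and the vanishing of inner products \eqref{inner.orthog}.

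\textbf{Main obstacle.} The genuinely hard input is the refined/inverse Strichartz inequality for $e^{-t\pt_x^3}$: it requires a bilinear ($X^{s,b}$-flavored) analysis on the Airy cone $\tau=\xi^3$ with a Whitney decomposition, and this is where the one-dimensional dispersion is used essentially. A secondary technical nuisance is bookkeeping the non-Galilean Airy symmetry group — boosting by $e^{ix\xi}$ does not commute cleanly with the flow — which is precisely why each frame must carry a frequency parameter $\xi_n$ allowed to run to infinity (with $\lmd_n\xi_n\to\infty$) and why the orthogonality relation in Definition \ref{asym.orth} is more involved than in the Schr\"odinger setting.
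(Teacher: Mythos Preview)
The paper does not supply its own proof of this lemma: it is quoted from \cite{shao2009}, with the remark that the stated properties are discussed in \cite[Lemma~2.4, Remark~2.5]{killip2009mass} and \cite[Corollary~3.7, Lemma~5.2]{shao2009}. Your outline is essentially the argument Shao carries out --- a refined Strichartz inequality for the Airy propagator, upgraded to an inverse Strichartz statement, then iterated to peel off profiles, with asymptotic orthogonality of frames deduced by contradiction from the weak-limit characterization --- so there is nothing substantive to compare.

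One small caution on details. Your interpolation reduction to the $L^6_{t,x}$ norm is the same device used in \cite{killip2009mass}, so that is fine. In the inverse-Strichartz step, be a bit careful with the dichotomy on $\xi_n$: in Shao's argument the near-maximizing box furnishes a frequency center $\xi_n$ and scale $\lmd_n$, and one then passes to a subsequence on which either $\lmd_n\xi_n$ stays bounded (in which case the modulation can be absorbed and one sets $\xi_n\equiv 0$, forcing $\phi$ real since $v_n$ is real) or $\lmd_n\xi_n\to\infty$; your sketch asserts this dichotomy but the mechanism deserves a line. Likewise, in your orthogonality-by-contradiction paragraph you invoke Lemma~\ref{weakgconv} to say the intermediate profiles are weakly null in the $j$-th frame, but strictly speaking that lemma is stated for a single diverging frame, whereas here you need the two-frame statement \eqref{inner.orthog}; since you are arguing by induction on $k$, the pairwise orthogonality of $\Gamma_n^\ell$ with $\Gamma_n^j$ for $j<\ell<k$ is already available, so \eqref{inner.orthog} applies and the argument goes through.
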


For more discussion of the properties stated above we refer to Lemma 2.4, Remark 2.5. in \cite{killip2009mass} and Corollary 3.7, Lemma 5.2 in \cite{shao2009}.

\begin{corollary} \label{cor.profile.dec}
Under the assumptions and notations of Lemma \ref{profile.dec}, if $ v_n \rightharpoonup 0 $ weakly in $ L^2 $, then also $  w_n^J \rightharpoonup 0 $ weakly in $ L^2 $ for all $ J \geq 1 $ after passing to a subsequence. For any $ 1 \leq j \leq J $ one has $ \phi^j=0 $ or $\Gamma_n^j =(\lmd_n^j, \xi_n^j, x_n^j, t_n^j ) \to \infty $ in the sense of Definition \ref{asym.orth} and therefore 
\be
 g^j_n e^{-t^j_n \partial^3_x} \text{Re}[e^{ix \xi_n^j  \lmd_n^j } \phi^j ]  \rightharpoonup 0.
\ee
\end{corollary}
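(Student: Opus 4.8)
The plan is to recover each profile by testing the decomposition against modulated, translated Airy evolutions, using the asymptotic orthogonality of the parameters together with the orthogonality relation \eqref{orthog2}. Fix $J$. The heart of the matter is to show that $\phi^{j_0}=0$ whenever the sequence $\Gamma_n^{j_0}=(\lmd_n^{j_0},\xi_n^{j_0},x_n^{j_0},t_n^{j_0})$ does \emph{not} tend to $\infty$ in the sense of Definition \ref{asym.orth}; the rest of the statement will follow formally. So suppose $\Gamma_n^{j_0}\not\to\infty$. After passing to a subsequence the quantities $\lmd_n^{j_0}$, $(\lmd_n^{j_0})^{-1}$, $|\xi_n^{j_0}|$, $|t_n^{j_0}|$, $|x_n^{j_0}|$ stay bounded, so $\xi_n^{j_0}\lmd_n^{j_0}$ cannot tend to $\infty$, and the dichotomy in Lemma \ref{profile.dec} forces $\xi_n^{j_0}=0$ with $\phi^{j_0}$ real. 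Passing to a further subsequence, arrange that each of the finitely many $\Gamma_n^{k}$, $1\le k\le J$, either tends to $\infty$ or converges to a finite limit in $(0,\infty)\times\mb{R}^3$; write $F$ for the set of indices of the latter kind, so $j_0\in F$. For each $k\in F$ the same argument gives $\xi_n^{k}=0$; and if $F$ contained two distinct indices, then with both frequency parameters equal to $0$ and both length scales bounded above and below, every term of the asymptotic-orthogonality expression in Definition \ref{asym.orth} would stay bounded, contradicting its divergence. Hence $F=\{j_0\}$.

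Next I would identify the weak limits of the various pieces along this subsequence. For $k\notin F$ we have $\Gamma_n^{k}\to\infty$; writing $\mathrm{Re}[e^{ix\xi_n^{k}\lmd_n^{k}}\phi^{k}]=\tfrac12 e^{ix\xi_n^{k}\lmd_n^{k}}\phi^{k}+\tfrac12 e^{-ix\xi_n^{k}\lmd_n^{k}}\overline{\phi^{k}}$ and applying Lemma \ref{weakgconv} to each summand (with $\theta_n=0$ and the two choices of sign) gives $g^{k}_n e^{-t^{k}_n\partial_x^3}\mathrm{Re}[e^{ix\xi_n^{k}\lmd_n^{k}}\phi^{k}]\rightharpoonup 0$. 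For $k=j_0$, since $\xi_n^{j_0}=0$ and $(x_n^{j_0},\lmd_n^{j_0},t_n^{j_0})$ converges to a finite limit, strong continuity of $(x,\lmd)\mapsto g_{x,\lmd}$ and of the Airy propagator shows that $g^{j_0}_n e^{-t^{j_0}_n\partial_x^3}\phi^{j_0}$ converges \emph{strongly} in $L^2$ to some $\Phi$ with $\vn{\Phi}_{L^2}=\vn{\phi^{j_0}}_{L^2}$ (the operators being unitary). Summing the profile decomposition over $1\le k\le J$ and using $v_n\rightharpoonup0$, we conclude $w_n^J\rightharpoonup-\Phi$.

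It remains to close the loop. Apply \eqref{orthog2} with $j=j_0$: its left-hand side is $\lng g^{j_0}_n e^{-t^{j_0}_n\partial_x^3}\phi^{j_0},\,w_n^J\rng$, the pairing of a strongly convergent sequence (with limit $\Phi$) against a weakly convergent one (with limit $-\Phi$), so it converges to $-\vn{\Phi}_{L^2}^2$; since \eqref{orthog2} forces this limit to be $0$, we get $\Phi=0$, i.e. $\phi^{j_0}=0$. This establishes the dichotomy for every $j$: if $\phi^j\neq0$ then necessarily $\Gamma_n^j\to\infty$ (along the full sequence), and in that case Lemma \ref{weakgconv}, applied as above, gives $g^j_n e^{-t^j_n\partial_x^3}\mathrm{Re}[e^{ix\xi_n^j\lmd_n^j}\phi^j]\rightharpoonup0$ — which also holds trivially when $\phi^j=0$. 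Finally $w_n^J=v_n-\sum_{1\le j\le J}g^j_n e^{-t^j_n\partial_x^3}\mathrm{Re}[e^{ix\xi_n^j\lmd_n^j}\phi^j]\rightharpoonup0$ because $v_n\rightharpoonup0$ and each subtracted term tends weakly to $0$.

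The only genuinely delicate point is the bookkeeping in the first paragraph: one must verify that at most one profile can have bounded parameters, so that the weak limit of $w_n^J$ is exactly $-\Phi$ rather than a sum of several terms — this is precisely where pairwise asymptotic orthogonality is used. Once that is in place, \eqref{orthog2} does all the work, and no further analytic input is needed beyond strong continuity of the group $G$ and of the Airy flow.
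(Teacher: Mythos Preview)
Your argument is correct and follows essentially the same route as the paper: pass to a subsequence so each $\Gamma_n^k$ either converges or goes to $\infty$, use pairwise asymptotic orthogonality to see that at most one index can have convergent parameters, then pair the strong limit of that profile against $w_n^J$ via \eqref{orthog2} to force it to vanish. Your write-up is in fact a bit more explicit than the paper's---you verify the ``at most one bounded profile'' claim directly from the expression in Definition \ref{asym.orth} rather than invoking \eqref{inner.orthog}, and you observe that the dichotomy and hence $w_n^J\rightharpoonup 0$ actually hold along the full sequence---but the skeleton is identical.
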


\begin{proof}
After passing to a subsequence, we can arrange so that for each $ j \in \overline{1,J} $, either $ \Gamma_n^j  $ converges to a finite $ \Gamma_0^j $ in $ (0,\infty) \times \mb{R}^3 $ or $ \Gamma_n^j  \to \infty $. By pair-wise asymptotic orthogonality we have \eqref{inner.orthog} and therefore at most one of the sequences $ \{ ( \Gamma_n^j)_{n \geq 1} \ | \ 1 \leq j \leq J, \ \phi^j \neq 0 \}  $ can converge to a finite value. Assume this happens for $ j=1 $ and then $ \xi^1_n =0 $ for all $ n \geq 1 $ and $ \phi^1 $ is assumed real.  Since $ v_n \rightharpoonup 0 $ we obtain 
$$  g^1_0 e^{-t^1_0 \partial^3_x} \phi^1  +   w_n^J \rightharpoonup 0.  
$$ 
Taking inner product with $ g^1_n e^{-t^1_n \partial^3_x} \phi^1 $ and using \eqref{orthog2} we obtain $ \vn{g^1_0 e^{-t^1_0 \partial^3_x} \phi^1}_{L^2}^2=0 $ and then $ \phi^1=0 $, which is a contradiction. \end{proof}

\

Finally, we recall the decoupling property of nonlinear profiles proved in \cite[Lemma 2.6]{killip2009mass}. When $ \xi_n \lmd_n \to \infty $ the decoupling will follow from this lemma together with the approximation \eqref{smoothapprox} from Theorem \ref{oscilatory.profiles}

\begin{lemma}[\cite{killip2009mass}] \label{decoupling}
Let $ \psi^1, \psi^2 \in C_c^{\infty} (\mb{R} \times \mb{R}) $ and sequences 
$$ (\Gamma_n^1)_{n \geq 1} =(\lmd_n^1, \xi_n^1, x_n^1, t_n^1 )_{n \geq 1}, \quad  (\Gamma_n^2)_{n \geq 1} =(\lmd_n^2, \xi_n^2, x_n^2, t_n^2 )_{n \geq 1}, $$ in $ (0,\infty) \times \mb{R}^3 $ assumed 
asymptotically orthogonal in the sense of Definition \ref{asym.orth}. Then one has:
$$
\lim_{n \to \infty}  \vn{ T_{g_{x^1_n, \lmd^2_n}} \psi^1(t+t_n^1) \  T_{g_{x^2_n, \lmd^2_n}} \psi^2(t+t_n^2)}_{L^{\frac{5}{2}}_x L^5_t }  =0, 
$$
in the case $ \xi^1_n \equiv \xi^2_n \equiv 0 $, and 
$$
\lim_{n \to \infty}  \vn{ T_{g_{x^1_n, \lmd^1_n}} \lpp \psi^1( 3 \lmd^1_n \xi^1_n (t+t_n^1), x+ 3 (\lmd^1_n \xi^1_n)^2 (t+t_n^1)  ) \rpp   T_{g_{x^2_n, \lmd^2_n}} \psi^2(t+t_n^2)}_{L^{\frac{5}{2}}_x L^5_t }  =0, 
$$ 
when $ \xi^1_n \lmd^1_n \to \infty $ and $ \xi^2_n \equiv 0 $, while
\begin{multline*}
\lim_{n \to \infty}  \vn{ 
T_{g_{x^1_n, \lmd^2_n}} \lpp \psi^1( 3 \lmd^1_n \xi^1_n (t+t_n^1), x+ 3 (\lmd^1_n \xi^1_n)^2 (t+t_n^1)  ) \rpp   \\
T_{g_{x^2_n, \lmd^2_n}} \lpp \psi^2( 3 \lmd^2_n \xi^2_n (t+t_n^2), x+ 3 (\lmd^2_n \xi^2_n)^2 (t+t_n^2)  ) \rpp   
}_{L^{\frac{5}{2}}_x L^5_t }  =0, 
\end{multline*}
when $ \xi^1_n \lmd^1_n \to \infty $ and $ \xi^2_n \lmd^2_n \to \infty $.
\end{lemma}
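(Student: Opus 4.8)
The plan is to exploit two invariances of $ \vn{F\cdot H}_{L^{5/2}_x L^5_t} $ — it is unchanged when the same symmetry $ T_g $, $ g\in G $, is applied to both factors, and when both are translated by the same amount in $ t $ — in order to bring each configuration into a normal form, and then to run a case analysis according to which term of the functional in Definition \ref{asym.orth} diverges. The first invariance holds because $ (T_gF)(T_gH)=\lmd^{-1}(FH)(\lmd^{-3}t,\lmd^{-1}(x-x_0)) $ and the change of variables contributes $ \lmd^{3/5} $ from $ dt $ and $ \lmd^{2/5} $ from $ dx $ to the $ L^{5/2}_x L^5_t $ norm, so the powers cancel: $ -1+\tfrac{3}{5}+\tfrac{2}{5}=0 $. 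Applying $ T_{g_{x^1_n,\lmd^1_n}}^{-1} $ to both factors and translating time by $ t^1_n $, we may assume the first profile has group element the identity and time parameter $ 0 $; the asymptotic orthogonality of $ \Gamma^1_n,\Gamma^2_n $ is preserved (the functional being built to be adapted to these symmetries), and it amounts to saying that, after passing to a subsequence, at least one of its finitely many terms tends to $ \infty $; it suffices to exclude each. We use repeatedly: for $ \psi\in C_c^\infty $ one has $ \vn{T_g\psi}_{L^{5/2}_x L^5_t}\simeq\lmd^{1/2}\vn{\psi}_{L^{5/2}_x L^5_t} $ and $ \vn{T_g\psi}_{L^\infty}=\lmd^{-1/2}\vn{\psi}_{L^\infty} $; and an ``oscillatory bump'' $ \psi(3\mu(t+s),\ x+3\mu^2(t+s)) $ is a $ C_c^\infty $ function of $ (t,x) $ carried by a thin tilted parallelogram of temporal extent $ \simeq 1/\mu $ and spatial extent $ \simeq\mu $, with the crucial refinement that its $ t $-slice at each fixed $ x $ has length only $ \simeq 1/\mu^2 $, and $ \vn{\,\cdot\,}_{L^{5/2}_x L^5_t}+\vn{\,\cdot\,}_{L^5_x L^{10}_t}\simeq 1 $ for such a bump.

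For the non-oscillatory statement ($ \xi^1_n\equiv\xi^2_n\equiv 0 $) the first factor is the fixed bump $ \psi^1 $ and the second is $ T_{\gamma_n}\psi^2 $ up to a time shift, with $ \gamma_n=g_{(x^2_n-x^1_n)/\lmd^1_n,\ \lmd^2_n/\lmd^1_n} $: a bump on a rectangle of width $ \simeq\lmd^2_n/\lmd^1_n $ and height $ \simeq(\lmd^2_n/\lmd^1_n)^{-1/2} $ whose center is moved by the transformed translations. If $ \lmd^2_n/\lmd^1_n\to\infty $ the second factor is $ O((\lmd^2_n/\lmd^1_n)^{-1/2}) $ on $ \mathrm{supp}\,\psi^1 $, so the product norm is $ \ls(\lmd^2_n/\lmd^1_n)^{-1/2}\to 0 $; if $ \lmd^2_n/\lmd^1_n\to 0 $ it has $ L^{5/2}_x L^5_t $ norm $ \simeq(\lmd^2_n/\lmd^1_n)^{1/2}\to 0 $, which controls the product since $ \psi^1 $ is bounded; if the scales stay comparable, a translation term diverges, the supports are eventually disjoint, and the product vanishes identically. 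For the oscillatory-versus-non-oscillatory statement we instead normalize the non-oscillatory profile to the fixed bump $ \psi^2 $; the remaining factor is $ T_{\gamma_n}[\mathrm{osc}^1] $ with scale $ \rho_n:=\lmd^1_n/\lmd^2_n $ and boost $ m_n\to\infty $. If $ \rho_n\to\infty $ its amplitude is $ \simeq\rho_n^{-1/2}\to 0 $; if $ \rho_n\to 0 $ its $ L^{5/2}_x L^5_t $ norm is $ \simeq\rho_n^{1/2}\to 0 $; if $ \rho_n\simeq 1 $ then on the fixed compact set $ \mathrm{supp}\,\psi^2 $ the oscillatory factor is supported on a set of spatial extent $ \ls 1 $ whose $ t $-slices have length $ \ls m_n^{-2} $, whence $ \vn{F_nH_n(\cdot,x)}_{L^5_t}\ls m_n^{-2/5} $ there and $ \vn{F_nH_n}_{L^{5/2}_x L^5_t}\ls m_n^{-2/5}\to 0 $ (and if a translation term also diverges the supports are disjoint).

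The oscillatory-versus-oscillatory statement is the heart of the lemma and uses all the terms of the functional; one compares two thin tilted parallelograms $ T^1_n,T^2_n $, the first normalized to $ \psi^1(3\mu^1_n(t+t^1_n),\ x+3(\mu^1_n)^2(t+t^1_n)) $ with scale $ \simeq 1 $ and the second of scale $ \lmd^2_n/\lmd^1_n\simeq 1 $ (scales comparable) or not. Along a subsequence one of the following holds. (i) $ \lmd^1_n/\lmd^2_n\to\infty $ or $ \to 0 $: then the second profile is a narrow or wide rescaling of an oscillatory bump, hence has small $ L^{5/2}_x L^5_t $ norm or small $ L^\infty $ norm, while the first factor has bounded $ L^\infty $ or bounded $ L^{5/2}_x L^5_t $ norm, so the product norm is $ \ls\min(\lmd^1_n/\lmd^2_n,\ \lmd^2_n/\lmd^1_n)^{1/2}\to 0 $. (ii) The scales are comparable but $ \sqrt{\lmd^1_n\lmd^2_n}\,\vm{\xi^1_n-\xi^2_n}\to\infty $: the center-line of $ T^j_n $ in the $ (t,x) $-plane has slope $ -3(\xi^j_n)^2 $, so the two slopes diverge relative to one another, and relative to the comparable widths of the tubes this confines $ F_nH_n $ to a small $ x $-set of measure $ \ls\mu^1_n/\vm{\mu^1_n-\mu^2_n} $; since the $ t $-slices of each factor at fixed $ x $ have length only $ \simeq 1/(\mu^j_n)^2 $, one gets $ \vn{F_nH_n(\cdot,x)}_{L^5_t}\ls\vn{\psi^2}_{L^\infty}\vn{F_n(\cdot,x)}_{L^5_t}\ls(\mu^1_n)^{-2/5} $ there, and integrating in $ x $ yields $ \vn{F_nH_n}_{L^{5/2}_x L^5_t}\ls\vm{\mu^1_n-\mu^2_n}^{-2/5}\to 0 $. (iii) The scales and the $ \xi $'s are comparable but the space- or the time-displacement term diverges: a short computation with the parametrizations $ (t,x)\mapsto(3\mu^j_n(t+t^j_n),\ x+3(\mu^j_n)^2(t+t^j_n)) $ then shows $ T^1_n\cap T^2_n=\varnothing $ for large $ n $. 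This exhausts the cases and proves the lemma; see \cite[Lemma 2.6]{killip2009mass} for the full details.

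I expect the main obstacle to be the oscillatory-versus-oscillatory analysis, and within it the bookkeeping: there is a genuine proliferation of subcases, and in each one must decide which factor to place in $ L^\infty $ and which in the scale-invariant $ L^5_x L^{10}_t $ norm (or in $ L^5_t $ / $ L^{10}_t $ at fixed $ x $), and then verify that the bilinear Hölder exponents leave a strictly negative power of the diverging quantity — a delicate point since the relevant scaling factors ($ \lmd^j_n $, $ \mu^j_n $, $ \xi^j_n $) do not cancel automatically once the scales are not literally equal. The one genuinely non-obvious input — used in case (ii) above and in the $ \rho_n\simeq 1 $ branch of the mixed case — is that an oscillatory bump of boost $ \mu $ has $ t $-slices of length only $ \simeq 1/\mu^2 $ (not $ \simeq 1/\mu $) at each fixed $ x $, and that this extra smallness in the inner time variable is exactly what absorbs the $ \lmd^{-1/2} $ amplitude growth in the narrow-rescaling subcases. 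Once the normalization and the single-profile scaling identities are in place, the non-oscillatory and mixed cases are essentially mechanical.
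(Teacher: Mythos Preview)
The paper does not prove this lemma at all: it is stated as a citation of \cite[Lemma~2.6]{killip2009mass}, and no argument is given in the present paper beyond the sentence ``Finally, we recall the decoupling property of nonlinear profiles proved in \cite[Lemma~2.6]{killip2009mass}.'' So there is nothing to compare your proposal against here.

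That said, your sketch is a faithful outline of the argument in \cite{killip2009mass}: reduce by the joint scaling/translation invariance of the $L^{5/2}_xL^5_t$ norm to a normal form, then run a case analysis on which summand in the orthogonality functional diverges, using the support geometry of the (possibly tilted) bumps together with the basic $L^\infty$ vs.\ $L^{5/2}_xL^5_t$ scaling of $T_g$. Your identification of the key point --- that an oscillatory bump with boost $\mu$ has $t$-slices at fixed $x$ of length $\simeq \mu^{-2}$ rather than $\mu^{-1}$ --- is exactly the mechanism that makes the mixed and divergent-frequency subcases close. One small caution: in subcase (ii) of the oscillatory--oscillatory analysis you should be a bit more careful with the bookkeeping, since after normalizing the first profile the relevant frequencies are $\mu^1_n=\lmd^1_n\xi^1_n$ and $\mu^2_n=\lmd^2_n\xi^2_n$, not $\xi^j_n$ themselves, and the slopes $-3(\mu^j_n)^2$ can both diverge while their \emph{ratio} stays bounded; the correct smallness comes from comparing the tube widths to the slope \emph{difference}, and you should check that the exponent you extract is genuinely negative in all the comparable-scale subcases (this is where \cite{killip2009mass} spends most of its effort). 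Apart from that, your plan is sound and matches the cited source.
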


\section{Reduction to an almost periodic solution - Proof of Theorem \ref{almost_periodic_prop}} \label{Sec.reduction.to.almost.periodic}

This section is devoted to the proof of Theorem \ref{almost_periodic_prop}. Therefore we will assume at least one $ \delta $-close solution exists. Then we define the set 
$$
S(\delta) \defeq \{ u \ | \ u = \text{solution} \ \delta-close \ \text{to} \  Q \ \text{with} \  M(u) < M_Q   \}
$$
and the minimal mass:
$$
m_0(\delta) \defeq \inf \{  M(u) \ | u \in  S(\delta) \}.
$$
By the triangle inequality, if $ u \in S(\delta) \neq \emptyset $ and $ t_0 \in I $ we have the basic bounds
\be \label{mass.closeness}
M_Q^{\frac{1}{2}} -\delta \leq \vn{ u(t_0)}_{L^2_x} < M_Q^{\frac{1}{2}}, \quad \text{and} \quad   M_Q^{\frac{1}{2}} -\delta  \leq m_0^{\frac{1}{2}} \leq M_Q^{\frac{1}{2}}.
\ee

\

The crux of the proof is the following Palais-Smale -type proposition which is used
to extract subsequences convergent in $ L^2 $.

\begin{proposition} \label{L2convergence_prop}
There exists an $ \delta>0 $ small enough such that the following holds. 
Let $ u_n:I_n \times \mb{R} \to \mb{R} $ be maximal-lifespan (strong) solutions to the mass-critical focusing gKdV equation \eqref{1.1} which are $ \delta $-close to $ Q $, i.e. for some continuous $ g_n : I_n \to G $ one has
\be \label{epcloseseq}
 \vn{ g_n(t) u_n(t) - Q}_{L^2} \leq \delta \qquad \forall \ t \in I_n, \ n \geq 1. 
\ee
Suppose $ M(u_n) \searrow m_0=m_0(\delta) $ and let $ t_n \in I_n $ be a sequence of times. Then the sequence  $ g_n(t_n) u_n(t_n) $ has a subsequence which converges in $ L^2 $ to a function $ \phi $ with $ M(\phi) =m_0 $. 
\end{proposition}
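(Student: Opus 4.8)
The plan is to run the standard Palais--Smale / minimal-mass argument using the Airy linear profile decomposition of Lemma \ref{profile.dec}, with the extra input from the $\delta$-closeness hypothesis playing the role of an a priori compactness bound on the symmetry parameters. First I would reduce to a normalized setting: replacing $u_n$ by $T_{g_n(t_n)}u_n$ and translating time, we may assume $t_n = 0$, $g_n(0) = \mathrm{id}$, so that $v_n := u_n(0)$ satisfies $\vn{v_n - Q}_{L^2} \le \delta$ and $M(v_n) = M(u_n) \searrow m_0$. By weak compactness of the ball in $L^2$ we may pass to a subsequence with $v_n \rightharpoonup \psi$ for some $\psi \in L^2$; the goal is to upgrade this to strong convergence with $M(\psi) = m_0$. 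Note $\vn{\psi - Q}_{L^2} \le \delta$ already by weak lower semicontinuity applied to $v_n - Q$, so in particular $\psi \ne 0$.

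Next I would apply Lemma \ref{profile.dec} to the (suitably centered) sequence $v_n$ — or rather to $v_n - \psi$ combined with the observation that $\psi$ itself is the ``first profile'' with trivial parameters. Concretely, write $v_n = \sum_{1 \le j \le J} g_n^j e^{-t_n^j \partial_x^3}\mathrm{Re}[e^{ix\xi_n^j\lambda_n^j}\phi^j] + w_n^J$; since $v_n \rightharpoonup \psi \ne 0$, Corollary \ref{cor.profile.dec} forces exactly one profile — say $j=1$ — to have finite (non-escaping) parameters, and after adjusting we may take $\Gamma_n^1 \equiv (1,0,0,0)$ and $\phi^1 = \psi$, with all remaining profiles escaping to $\infty$ and $w_n^J \rightharpoonup 0$. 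The $L^2$ orthogonality \eqref{L2.orthog} gives $M(\psi) + \sum_{j \ge 2}\vn{\mathrm{Re}[e^{ix\xi_n^j\lambda_n^j}\phi^j]}_{L^2}^2 + \lim_n\vn{w_n^J}_{L^2}^2 + o_J(1) \le m_0$. The heart of the argument is to show every profile with $j \ge 2$ vanishes and that the remainder goes to zero strongly; the conclusion is then $v_n \to \psi$ in $L^2$ with $M(\psi) = m_0$.

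To kill the escaping profiles, I would argue by contradiction assuming some $\phi^j \ne 0$ with $j \ge 2$. Each nonlinear profile has mass strictly less than $m_0$ (by \eqref{L2.orthog}, since $M(\psi) \ge M_Q^{1/2} - \delta$ is bounded below near $M_Q$ and hence, together with $\phi^j \ne 0$, would push the total mass above — wait, more carefully: each profile mass is $\le m_0 - M(\psi) + o(1) < m_0$ provided $M(\psi) > 0$, which holds since $\vn{\psi - Q} \le \delta$). So each nonlinear profile generates a solution of the appropriate type: for $\xi_n^j \equiv 0$ profiles, a genuine gKdV solution of mass $< m_0$, hence (one must check) not $\delta$-close to $Q$ unless trivial — this is where one uses that $m_0$ is the \emph{infimal} mass among $\delta$-close solutions and that a profile of strictly smaller mass cannot be $\delta$-close, so it must have small scattering size and scatter; for $\xi_n^j\lambda_n^j \to \infty$ profiles, Theorem \ref{oscilatory.profiles} produces an NLS-embedded solution with globally bounded spacetime norm. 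Using the decoupling Lemma \ref{decoupling} together with \eqref{smoothapprox}, the superposition of these (plus the evolution of $w_n^J$) is an approximate solution to \eqref{1.1} with small error and bounded scattering size, so the stability Lemma \ref{short.time.stability} (iterated via Lemma \ref{num.intervals}, controlling interval counts) yields that $u_n$ itself has uniformly bounded scattering size on all of $I_n$. But then, by the local well-posedness theorem, $I_n = \mathbb{R}$ and $u_n$ scatters, which is incompatible with $u_n$ being $\delta$-close to $Q$ for all time (a scattering solution eventually has arbitrarily small spatial-localization mass near any soliton position, contradicting \eqref{1.8} via the $L^2$-localization characterization in Section \ref{sec:almost:per} combined with $\vn{u_n}_{L^2}$ staying near $M_Q^{1/2}$). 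Hence all $\phi^j = 0$ for $j \ge 2$, and similarly $\vn{w_n^J}_{L^2} \to 0$; combined with $\vn{v_n}_{L^2}^2 \to m_0 = M(\psi)$ and $v_n \rightharpoonup \psi$, we get strong convergence.

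The main obstacle, I expect, is making rigorous the step that a nontrivial escaping profile forces $u_n$ to scatter with a \emph{uniform} bound: one needs the approximate-solution error estimates to be uniform in $n$ (so that short-time stability applies on a fixed number of subintervals), which requires carefully combining the $L^5_xL^{10}_t$-decay of the remainder $w_n^J$, the decoupling of distinct profiles in the $L^{5/2}_xL^5_t$ norm entering $\vn{|\partial_x|^{-1}\partial_x(\cdot^5)}$, and the global NLS-embedding bounds — all in the regime where the number of profiles $J$ and the truncation parameter $T$ in \eqref{unt} must be sent to infinity \emph{after} $n$. A secondary subtlety is the final contradiction: translating ``$u_n$ scatters'' into ``$u_n$ is eventually far from every rescaled soliton in $L^2$'' needs the equivalent almost-periodicity/localization criterion and the fact that $\vn{u_n(t)}_{L^2}$ cannot drop below $M_Q^{1/2} - \delta$, so that no cancellation can keep a scattering (asymptotically free, hence spatially spreading) profile close to the localized $Q$.
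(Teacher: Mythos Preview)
Your setup through the profile decomposition is essentially the same as the paper's, and you correctly identify that the first profile is $\psi$ with trivial parameters while the remaining profiles escape. But the endgame contains a genuine gap.

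You argue that each escaping profile generates a scattering solution, and that therefore the full approximate solution has globally bounded $L^5_xL^{10}_t$ norm, whence by stability $u_n$ itself scatters, contradicting $\delta$-closeness. The problem is the first profile $v^1$ (the maximal-lifespan solution with data $\psi$). You only know $M(\psi) < m_0$; your chain ``mass $< m_0$ $\Rightarrow$ not $\delta$-close to $Q$ $\Rightarrow$ scatters'' fails at the last arrow --- nothing in the hypotheses says that solutions below $m_0$ which are \emph{not} $\delta$-close to $Q$ must scatter. (For the profiles $j\ge 2$ you don't need this reasoning anyway: their mass is at most $m_0 - M(\psi) \lesssim \delta$, so they scatter by small data theory, which is what the paper uses.) Without global control on $S_{\mathbb R}(v^1)$, you cannot conclude that the approximate solution has bounded global scattering norm, and the ``$u_n$ scatters'' contradiction collapses.

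The paper's contradiction is different and avoids this issue entirely. One works only on the maximal interval $I$ of $v^1$; there stability applies on compact subintervals (where $S(v^1)$ is finite), giving $u_n \approx v^1 + (\text{escaping part})$ on $I$. One then shows, for each fixed $t\in I$, that the escaping nonlinear profiles $v^j_n(t)$ and $e^{-t\partial_x^3}w_n^J$ converge weakly to zero, and that the symmetry elements $g_n(t)$ stay in a compact set (via Lemma~\ref{group.compactness}, which you do not invoke). Expanding $\|u_n(t) - g_n(t)^{-1}Q\|_{L^2}^2 \le \delta^2$ and passing to the limit then forces $\|g(t)v^1(t) - Q\|_{L^2} \le \delta$ for all $t\in I$. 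Thus $v^1 \in S(\delta)$ with $M(v^1) < m_0$, contradicting the definition of $m_0$. This argument never needs $v^1$ to scatter, nor does it try to kill the profiles $\phi^j$ ($j\ge 2$) or $w_n^J$ individually --- the contradiction comes directly from $v^1$ inheriting the $\delta$-closeness.
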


Assuming Proposition \ref{L2convergence_prop} we can now construct almost periodic solutions.

\begin{proof}[{\bf Proof of Theorem \ref{almost_periodic_prop}}]
We first show that if $ M(u)>m_0 $ then there exists a maximal-lifespan solution $ v : J \times \mb{R} \to \mb{R} $ with minimal mass $ M(v)=m_0 $ which is $ \delta $-close to $ Q $. In that case there exists a sequence of maximal-lifespan solutions $ u_n:I_n \times \mb{R} \to \mb{R} $ with $ M(u_n) \searrow m_0 $ such that \eqref{epcloseseq} holds for some continuous $ g_n : I_n \to G $. Then we apply Prop. \ref{L2convergence_prop} with some $ t_n \in I_n $ and obtain a $ \phi \in L^2 $ with $ \vn{\phi}_{L^2}=m_0^{1/2} $. By translating time we may assume all $ t_n=0 $ and by applying transformations $ T_{g_n(0)^{-1}} $ we may assume without loss of generality that all $ g_n(0) $ are the identity. 
Let $ v $ be the strong solution to  \eqref{1.1} with initial data $ v(0) =\phi $, defined on a maximal interval $ J $, which then satisfies 
$$ \vn{ u_n(0) - v(0)}_{L^2_x} \to 0. $$

Then for any $ t \in J $, by continuous dependence on initial data, see Corollary  \ref{sol.conv} applied on $ [0,t] $, one has $ t \in I_n $ for $ n $ large enough and 
\be \label{un.v.conv} 
\vn{ u_n(t) - v(t)}_{L^2_x} + \vn{u_n-v}_{ L^5_x L^{10}_t([0,t] \times \mb{R}) }  \to 0. 
\ee
By Lemma \ref{group.compactness} we have $ g_n(t) \in K_t $ for a compact set $ K_t $. Then we can extract a subsequence such that $ g_n(t) $ converges to some $ g(t) \in G $ in the strong operator topology. Therefore \eqref{un.v.conv}  and \eqref{epcloseseq} imply
$$  \vn{ g(t) v(t) - Q}_{L^2} \leq \delta \qquad \forall \ t \in J, $$
which gives the desired $ \delta $-closeness to $ Q $. Note that $ g(t) $ is continuous. 
\

We now show that $ v $ is almost periodic modulo symmetries. This follows by considering a new arbitrary sequence of times $ t_n \in J $ and applying Prop. \ref{L2convergence_prop}  with $ g_n =g $, $ u_n =v $ and $ t_n \in I_n=J $ to conclude that $ g(t_n) v(t_n) $ has a limit point in $ L^2 $. 
\end{proof}

\

It remains to prove the key convergence result.

\begin{proof}[{\bf Proof of Proposition \ref{L2convergence_prop}}] By translating time we may assume all $ t_n=0 $ and by applying transformations $ T_{g_n(0)^{-1}} $ we may assume without loss of generality that all $ g_n(0) $ are the identity. 

We divide the proof into several steps and for the first steps we largely follow the outline of \cite[Prop. 5.1 -Case II]{killip2009mass}, with the mention that here one needs to insure that the bulk of $ m_0 $, except for $ O(\delta) $ mass, has to fall onto the first profile. 

{\bf Step 1.} (Decomposing the sequence) 

By passing to a subsequence, using the Banach-Alaoglu theorem, we obtain a function $ \phi^1 \in L^2 $ such that $ u_n(0) \rightharpoonup \phi^1 $ weakly in $ L^2 $. Note that $ \vn{\phi^1}_{L^2}^2 \leq m_0 $ and since $ u_n(0) - Q \rightharpoonup \phi^1 -Q $ we obtain 
\be
\vn{\phi^1 -Q}_{L^2} \leq \delta. 
\ee 
Moreover, 
\be \label{unphi.conv}
\vn{ u_n(0) - \phi^1}_{L^2}^2=\vn{u_n(0) }_{L^2}^2+ \vn{\phi^1}_{L^2}^2- 2 \lng u_n(0), \phi^1 \rng  \overset{n}{\longrightarrow}  m_0-\vn{\phi^1}_{L^2}^2
\ee 
If $ \vn{\phi^1}_{L^2}^2 = m_0 $ this implies the desired convergence. Now assume $ \vn{\phi^1}_{L^2}^2 < m_0 $ and we will obtain a contradiction. We use the profile decomposition in Lemma \ref{profile.dec} and its Corollary \ref{cor.profile.dec} applied to $ v_n=u_n(0) - \phi^1  \rightharpoonup 0 $ to write for any $ J \geq 2 $
$$
u_n(0) - \phi^1 =  \sum_{2 \leq j \leq J}  g^j_n e^{-t^j_n \partial^3_x} \text{Re}[e^{ix \xi_n^j  \lmd_n^j } \phi^j ]  + w_n^J. 
$$
By \eqref{unphi.conv}, the limit \eqref{L2.orthog} becomes
\be \label{L2.orthog2}
m_0-\vn{\phi^1}_{L^2}^2- \sum_{2 \leq j \leq J} \vn{\text{Re}[e^{ix \xi_n^j  \lmd_n^j } \phi^j ]  }_{L^2}^2 - \vn{w_n^J}_{L^2}^2   \overset{n}{\longrightarrow}  0. 
\ee
By re-denoting some indices, we may assume that all the $ \phi^j $'s are nonzero. Defining  $\Gamma_n^1 =(1, 0, 0, 0 ) $ corresponding to $ \phi^1 $, from Corollary  \ref{cor.profile.dec} we obtain that $ \Gamma_n^j =(\lmd_n^j, \xi_n^j, x_n^j, t_n^j ) \to \infty $ for $ j \geq 2 $,  and thus all $ (\Gamma_n^j)_{j \geq 1} $ are pair-wise asymptotically orthogonal and
\be \label{wnJ.weaklimit}
 w_n^J \rightharpoonup 0.
\ee
From \eqref{mass.closeness} and $ \vn{\phi^1}_{L^2}^2 \leq m_0 $ we obtain the smallness condition 
\be \label{smallness}
\sum_{2 \leq j \leq J} \vn{\text{Re}[e^{ix \xi_n^j  \lmd_n^j } \phi^j ]  }_{L^2}^2 + \vn{w_n^J}_{L^2}^2 < 2 \delta M_Q^{\frac{1}{2}}, \qquad \forall \ n \gg_J 1.
\ee

\

{\bf Step 2.} (Construct nonlinear profiles)
\

Let $ v^1 :I \times \mb{R} \to \mb{R} $ be the maximal-lifespan solution to \eqref{1.1} with initial data $ v^1(0)=\phi $. 
We continue with defining solutions associated to the profiles for $ j \geq 2 $. For each $ J \geq 2 $ we reorder the indices such that:

A) For  $ j \in \overline{2,J_0} $ one has $ \xi_n^j \equiv 0 $. Then one can refine the sequence for each $ j $ and by a diagonal argument one can assume that each sequence $ (t^j_n)_{n \geq 1} $ has a limit $ T^j $, possibly $ \pm \infty $. If $ T^j $ is finite one may assume that $ t^j_n \equiv T^j =0 $ by replacing $ \phi^j $ by $ e^{T^j \partial_x^3} \phi^j $ and by absorbing $ e^{- (t^j_n-T^j) \partial_x^3} \text{Re}\phi^j -  \text{Re}\phi^j $ into the remainder term $ w^J_n $. One defines:
\begin{itemize}
\item When  $ t^j_n \equiv 0 $, let $ v^j $ be the the maximal-lifespan solution to \eqref{1.1} with $ v^j(0)=\text{Re}\phi^j $.
\item If $  t^j_n \to \pm \infty $, let $ v^j $ be the the maximal-lifespan solution to \eqref{1.1} which scatters forward/backward in time to  $ e^{- t \partial_x^3} \text{Re}\phi^j  $.
\end{itemize}  
Due to the smallness property \eqref{smallness}, each $ v^j $ is global and $ S_{\mb{R}} (v^j) \ls M[	\text{Re}\phi^j] $.

The nonlinear profiles are defined by
$$
v^j_n(t) \defeq T_{g^j_n}[ v^j(\cdot + t^j_n)](t), \qquad  j \in \overline{2,J_0}, \ n \geq 1,
$$ 
so that $ v^j_n : \mb{R} \times \mb{R} \to \mb{R} $ with $ v^j_n(0)=g^j_n v^j(t_n) $. 

B) For $ j \in \overline{J_0+1,J} $ the reordering satisfies $  \xi_n^j \lmd_n^j \to \infty $. 
For $ n $ sufficiently large, the solution to \eqref{1.1} with data
$$ 
\tilde{v}^j_n (t^j_n) = e^{-t^j_n \partial^3_x} \text{Re}[e^{ix \xi_n^j  \lmd_n^j } \phi^j ]
$$
is global and small. Moreover, by applying the Riemann-Lebesgue lemma to 
$$ 
2 \vn{\text{Re}[e^{ix \xi_n^j  \lmd_n^j } \phi^j ]  }_{L^2}^2 = M(\phi^j) +\int_{\mb{R}}
 \text{Re}[e^{i 2 x \xi_n^j  \lmd_n^j } \phi^j(x)^2 ]  \dd x
$$
to obtain a bound on $ M(\phi^j) $, 
one has the approximation given by Theorem \ref{oscilatory.profiles} 
(since one can insure, using a diagonal argument, that $ (t^j_n \xi^j_n \lmd^j_n)_{n \geq 1} $ has a limit).  

Again, one transforms these solutions to obtain $ v^j_n : \mb{R} \times \mb{R} \to \mb{R} $ by
$$
v^j_n(t) \defeq T_{g^j_n}[ \tilde{v}^j_n(\cdot + t^j_n)](t), \qquad  j \in \overline{J_0+1, J}, \ n \gg 1.
$$

\

For both cases A) and B) Lemma \ref{decoupling} and Theorem \ref{oscilatory.profiles} give the decoupling property 
\be \label{decouplingg}
\lim_{n \to \infty}  \vn{v^j_n v^k_n}_{L^{\frac{5}{2}}_x L^5_t (I \times \mb{R})}  =0 \qquad \qquad \forall \ 1 \leq j <k 
\ee
where for $ j=1 $ we denote $ v^1_n=v^1 $. 

Moreover, due to the smallness and the invariance of the scattering norm one has
\be \label{vjn.bound}
S_{\mb{R}} (v^j_n) \ls \vn{\text{Re}[e^{ix \xi_n^j  \lmd_n^j } \phi^j ]}_{L^2_x}^2, \qquad  j \geq 2, \ n \gg_j 1.
\ee

\

{\bf Step 3.} (Construct approximate solutions and bound the difference)

For any $ J \geq 2 $ construct the approximate solution, defined on $ I $ for $ n \gg_J 1 $ by
$$ \tilde{u}_n^J(t) \defeq v^1(t)+\sum_{j=2}^J v^j_n(t) + e^{-t \partial^3_x} w^J_n. $$
and define the remainders $ r_n^J $ on $ I \cap I_n  $ by
$$ u_n(t)= \tilde{u}_n^J(t) + r_n^J(t). $$

From the way the $ v^j_n $ were constructed we obtain
\be \label{rn0}
\vn{r_n^J(0)} _{L^2}= \vn{u_n(0)- \tilde{u}_n^J(0)}_{L^2} \overset{n}{\longrightarrow} 0,  \qquad \forall J \geq 2.
\ee
Next we bound the scattering size on any interval $ \tilde{I} $, using \eqref{smallness},\eqref{vjn.bound} and using the decoupling \eqref{decouplingg} after having raised the sum to the power 5:
\begin{align} \nonumber
\limsup_{n \to \infty} S_{\tilde{I}}( \tilde{u}_n^J ) & \ls \limsup_{n \to \infty} S_{\tilde{I}}( \sum_{j=1}^J v^j_n ) + \limsup_{n \to \infty} S_{\mb{R} }(e^{-t \partial^3_x} w^J_n) \\
\nonumber
& \ls S_{\tilde{I}}(v^1) +  \limsup_{n \to \infty}  \sum_{j=2}^J S_{\mb{R} }(v^j_n ) + \delta \\
\label{Scatteringlimit}
& \ls S_{\tilde{I}}(v^1) +  \limsup_{n \to \infty}  \sum_{j=2}^J \vn{\text{Re}[e^{ix \xi_n^j  \lmd_n^j } \phi^j ]}_{L^2_x}^2+ \delta \ls S_{\tilde{I}}(v^1) + \delta.
\end{align}

\

In the remainder of this step we prove 
\be \label{limit.intervals} 
I \subseteq  \bigcup_{N \geq 1} \bigcap_{n \geq N} I_n  
\ee
and that for any $ t \in I $ one has
\be \label{rnt}
\lim_{J \to \infty} \limsup_{n \to \infty}   \vn{r_n^J(t)} _{L^2}=0. 
\ee 
Suppose $ t > 0 $. Divide $ [0,t] $ into intervals $[t_k, t_{k+1}] $, $ k \in \overline{1,N} $, $ t_1 = 0 $ such that 
\be \label{v1.ep0}
\vn{v^1}_{L^5_x L^{10}_t (  [t_k, t_{k+1}]  \times \mb{R} ) } \simeq \ep_0 \qquad \forall \ k \in \overline{1,N-1} 
\ee
where $ \ep_0=\ep_0(M_Q,1)>0 $ is the universal constant given by Lemma \ref{short.time.stability}. Then Lemma \ref{num.intervals} gives a bound on the number of intervals $ N $. 

We begin with \eqref{rn0} and do an inductive argument to show that if $ t_k \in I_n $ for $ n \gg_k 1 $  and 
\eqref{rnt} holds at $ t=t_k $, then $ t_{k+1} \in I_n $ holds for $ n \gg_{k+1} 1 $ and 
$$ 
\lim_{J \to \infty} \limsup_{n \to \infty}   \vn{r_n^J} _{L^{\infty}_t L^2_x ([t_k,t_{k+1}] \times \mb{R})}=0. 
$$
These facts follow from the short-time stability Lemma \ref{short.time.stability} applied with $ u_n $ and $ \tilde{u}_n^J $, provided we check:
\begin{align}
\label{sclim}
 \limsup_{n \to \infty}  \vn{ \tilde{u}_n^J}_{L^5_x L^{10}_t(  [t_k, t_{k+1}]  \times \mb{R} ) } \leq \frac{ \ep_0}{2} \qquad \forall \ J \geq 2, \ k \in \overline{1,N} \\
\label{l1l2lim}
\lim_{J \to \infty} \limsup_{n \to \infty} \vn{ \vm{\partial_x}^{-1}  [ (\pt_t + \partial_x^3)\tilde{u}_n^J- \partial_x ( \tilde{u}_n^J)^5 ] }_{L^1_x L^2_t ([t_k, t_{k+1}]  \times \mb{R})} = 0.
\end{align} 
The first bound \eqref{sclim} follows from \eqref{Scatteringlimit} by appropriately choosing the implicit constant in \eqref{v1.ep0}  and choosing $ \delta $ small enough. 
The asymptotic solution bound \eqref{l1l2lim} is proved in Lemma \ref{approx.sol} below.
This completes the proof of \eqref{limit.intervals} and \eqref{rnt}. Moreover, by summing over intervals and recalling that $ \ep_0 $ is fixed, this argument and Lemma \ref{short.time.stability} give the uniform bound  
\be \label{Scat.norm.unif}
\vn{u_n}_{L^5_x L^{10}_t([0,t]) } \ls N \ep_0 \leq C \big(  \vn{v^1}_{L^5_x L^{10}_t([0,t]) } \big), \qquad n \gg_t 1. 
\ee

\

{\bf Step 4.}  (Show that $  v^j_n(t) $ converges weakly to $ 0 $) 

Fix $ t \in \mb{R} $ and $ j \geq 2 $. Recall that $\Gamma_n^j =(\lmd_n^j, \xi_n^j, x_n^j, t_n^j ) \to \infty $ in the sense of Definition \ref{asym.orth}. 

A) We first assume $ \xi^j_n \equiv 0 $. Then
$$ v^j_n(t) = g^j_n v^j \Big( t^j_n + \frac{t}{(\lmd^j_n)^3} \Big). $$
 By passing to a subsequence, we may assume 
$$ t^j_n + \frac{t}{(\lmd^j_n)^3} \to T_j \in [-\infty,\infty].  $$ 
If $ T_j $ is finite, in either case $ t^j_n \equiv 0 $ or $ t^j_n \to \infty $ we have $ g^j_n \to \infty $ and the claim reduces to $ g^j_n v^j(T_j) \rightharpoonup 0 $, which follows from \eqref{gamma.weak.conv}. 

If $ T_j \to \pm \infty $ we use scattering to replace $ v^j_n(t) $ by $  g^j_n e^{-\big( t^j_n + \frac{t}{(\lmd^j_n)^3} \big) \partial^3_x} v_{\pm} $. Then we can approximate by bump functions and apply the dispersive estimate. 

B) It remains to consider the case $ \xi^j_n \lmd^j_n \to \infty $. This implies in particular that $ \xi^j_n + \lmd^j_n \to \infty    $. Fix $ t \in \mb{R} $, $ \epsilon >0 $, $ j \geq J_0 +1 $ and $ \varphi \in C_c^{\infty}(\mb{R}) $. We will use the approximation involving NLS solutions from Theorem \ref{oscilatory.profiles} to show
$$
\vm{ \lng v_n^j (t), \varphi \rng } < \epsilon
$$ 
for $ n $ large enough. Since
$$ 
v_n^j (t) = g_n^j \tilde{v}_n^j \Big(t_n^j + \frac{t}{(\lmd_n^j)^3} \Big) 
$$
we can use the approximation \eqref{unifapproxu} to reduce to 
$$
\vm{ \lng g_n^j  \tilde{u}_n^T \Big( t_n^j + \frac{t}{(\lmd_n^j)^3} \Big) , \varphi \rng } < \frac{\epsilon}{2}
$$
for a fixed large $ T $, where the $ \tilde{u}_n^T $ are defined by \eqref{unt} in terms of NLS solutions $ V_n $. 

By passing to a subsequence we may assume that all the $ t_n^j + \frac{t}{(\lmd_n^j)^3} $ are in $ [ -\frac{T}{3 \xi_{n} \lmd_n}, \frac{T}{3 \xi_{n} \lmd_n} ] $ or in $ [ \frac{T}{3 \xi_{n} \lmd_n} , \infty) $ or in $ (-\infty, -\frac{T}{3 \xi_{n} \lmd_n}] $ and that in the first case we have a limit 
$$ T_1 \defeq  \lim_{n \to \infty} 3 \xi_{n} \lmd_n \Big( t_n^j + \frac{t}{(\lmd_n^j)^3} \Big) \in [-T, T]. 
$$
In the other two cases we define $ T_1 \defeq \pm T $. Using \eqref{VnV}, \eqref{unt} and $ V \in C_t L_x^2 $ we approximate
$$
\vn{  \tilde{u}_n^T \Big( t_n^j + \frac{t}{(\lmd_n^j)^3} \Big) - f_n(T_1) }_{L^2} < \epsilon^2,  \qquad n \gg 1
$$
where we denote
$$
 f_n (T_1) \defeq e^{-s_n \partial^3_x} \text{Re} [e^{ ix \xi_n  \lmd_n } e^{i c_n} V(T_1, x-y_n) ] 
$$
for some values $ s_n, c_n, y_n $. 
Therefore, denoting $ W $ to be either $ V $ or $ \bar{V} $, we reduce to showing
$$
e^{\pm i \theta_n} g_{x_n^j, \lmd_n^j } g_{y_n, 1}  e^{-s_n \partial^3_x}   [e^{ \pm ix \xi_n  \lmd_n } W(T_1) ]  \rightharpoonup 0,
$$
for some $ \theta_n $'s. This follows from Lemma \ref{weakgconv} because $  g_{x_n^j, \lmd_n^j } g_{y_n, 1} = g_{z_n, \lmd_n^j} $ for some $ z_n $ and we have $\xi^j_n +  \lmd^j_n \to \infty $.

\

From A) and B) we conclude
\be \label{vjn.weak.conv}
v^j_n(t) \rightharpoonup 0, \qquad \forall \  t \in \mb{R}, \ j \geq 2. 
\ee

\

{\bf Step 5.} (Prove that $ v^1 $ is $ \delta $-close to $ Q $) 

Fix an arbitrary $ t \in I $, where we recall that $ I $ is the maximal lifespan of $ v^1 $. Then, by \eqref{limit.intervals} we have  $ t \in I_n $ for $ n $ large enough. We expand
\be \label{expand}
\delta^2 \geq \vn{u_n(t) - g_n(t)^{-1} Q }_{L^2}^2 = \vn{v^1(t)-g_n(t)^{-1} Q }_{L^2}^2 + A_n^J(t) + B_n^J(t)
\ee
with the terms 
\begin{align*}
A_n^J(t) & \defeq \vn{ \sum_{j=2}^J v^j_n(t) + e^{-t \partial^3_x} w^J_n+ r_n^J(t)}_{L^2}^2 \\
B_n^J (t) & \defeq 2 \lng v^1(t)-g_n(t)^{-1} Q \ ,  \sum_{j=2}^J v^j_n(t) + e^{-t \partial^3_x} w^J_n+ r_n^J(t)		\rng.
\end{align*}
Due to the uniform bound \eqref{Scat.norm.unif}, Lemma \ref{group.compactness} provides the existence of a compact set $ K_t $ such that $ g_n(t) \in K $ for $ n $ large enough. We extract a subsequence such that $ g_n(t) $ converges to some $ g(t) \in G $ in the strong operator topology. Then also $ g_n(t)^{-1} \to g(t)^{-1} $, so we may replace $ g_n(t)^{-1} Q  $ by $ g(t)^{-1} Q $ when we use  \eqref{vjn.weak.conv},  \eqref{wnJ.weaklimit} and \eqref{rnt} to obtain 
$$
\lim_{J \to \infty} \limsup_{n \to \infty} B_n^J (t)=0.
$$
We use this together with $ A_n^J(t) \geq 0 $ to pass to the limit in \eqref{expand} and conclude
$$ \vn{g(t)  v^1(t)-Q }_{L^2} \leq \delta \qquad \forall t \in I.
$$
This means $ v^1 \in S(\delta) $ with $ M(v^1) < m_0(\delta) $, a contradiction. 
\end{proof}

\

It remains to verify the asymptotic solution bound \eqref{l1l2lim}. 

\begin{lemma} \label{approx.sol}
Suppose $ w_n^J \in L^2_x(\mb{R}) $, $ J \geq 1,\ n \geq 1 $ and that 
$ v^j_n \in L^5_x L^{10}_t (\tilde{I} \times \mb{R} ) $ are solutions to \eqref{1.1}  such that for any $ 1 \leq j <k $ 
$$
\lim_{n \to \infty}  \vn{v^j_n v^k_n}_{L^{\frac{5}{2}}_x L^5_t (\tilde{I} \times \mb{R})}  =0,  \qquad     \lim_{J \to \infty} \limsup_{n \to \infty} \vn{e^{-t \partial^3_x} w_n^J }_{L^5_x L^{10}_t (\tilde{I} \times \mb{R} )}=0.
$$
Then, assuming the $ \tilde{u}_n^J $ are uniformly bounded in  $ L^5_x L^{10}_t (\tilde{I} \times \mb{R} ) $, defined by  
$$ \tilde{u}_n^J(t) \defeq \sum_{j=1}^J v^j_n(t) + e^{-t \partial^3_x} w^J_n, $$
one has
$$
\lim_{J \to \infty} \limsup_{n \to \infty} \vn{ \vm{\partial_x}^{-1}  [ (\pt_t + \partial_x^3)\tilde{u}_n^J- \partial_x ( \tilde{u}_n^J)^5 ] }_{L^1_x L^2_t ( \tilde{I} \times \mb{R})} = 0.
$$
\end{lemma}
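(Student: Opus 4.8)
The plan is to recognize the error as a perfect $ x $-derivative of an explicit quintic expression, to make that quintic small in $ L^1_xL^2_t $ (and, with a little room to spare, in $ L^p_xL^2_t $ for $ p $ slightly above $ 1 $) by means of the decoupling, and finally to dispose of the operator $ \vm{\pt_x}^{-1}\pt_x $, which is the only genuine difficulty since it is unbounded on $ L^1_x $. For the first point, let $ e^J_n $ denote the error produced by $ \tilde u^J_n $ in \eqref{approx.kdv}, i.e. $ e^J_n=(\pt_t+\pt_x^3)\tilde u^J_n+\pt_x\big((\tilde u^J_n)^5\big) $. Since each $ v^j_n $ solves \eqref{1.1} (so $ (\pt_t+\pt_x^3)v^j_n=-\pt_x((v^j_n)^5) $) and $ e^{-t\pt_x^3}w^J_n $ solves the Airy equation, the linear terms collapse and
$$ e^J_n=\pt_x F^J_n,\qquad F^J_n\defeq (\tilde u^J_n)^5-\sum_{j=1}^J (v^j_n)^5 . $$
Writing $ \tilde u^J_n=S^J_n+W^J_n $ with $ S^J_n=\sum_{j\le J}v^j_n $ and $ W^J_n=e^{-t\pt_x^3}w^J_n $, one expands $ F^J_n=\big[(S^J_n+W^J_n)^5-(S^J_n)^5\big]+\big[(S^J_n)^5-\sum_{j\le J}(v^j_n)^5\big] $, so for each fixed $ J $ the function $ F^J_n $ is a finite sum of degree-$ 5 $ monomials in $ v^1_n,\dots,v^J_n,W^J_n $, each of which either contains a factor $ W^J_n $ or has the form $ v^{j_1}_n\cdots v^{j_5}_n $ with $ j_1\neq j_2 $ (after relabelling).

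Step 2 shows $ F^J_n $ is small. One first records the uniform bounds $ \sup_{n,J}\sum_{j\le J}S_{\tilde I}(v^j_n)\ls 1 $ and $ \sup_J\limsup_n\vn{W^J_n}_{L^5_xL^{10}_t}\ls 1 $ (in the setting of Proposition \ref{L2convergence_prop} these follow from \eqref{vjn.bound}, \eqref{Scatteringlimit} and the $ L^2 $-orthogonality; in general they can be extracted from the assumed uniform $ L^5_xL^{10}_t $-bound on $ \tilde u^J_n $ together with the decoupling by an induction in $ J $). For a monomial with $ j_1\ne j_2 $, grouping that pair and applying H\"older gives
$$ \vn{v^{j_1}_n v^{j_2}_n v^{j_3}_n v^{j_4}_n v^{j_5}_n}_{L^1_xL^2_t}\le \vn{v^{j_1}_n v^{j_2}_n}_{L^{5/2}_xL^5_t}\,\vn{v^{j_3}_n}_{L^5_xL^{10}_t}\vn{v^{j_4}_n}_{L^5_xL^{10}_t}\vn{v^{j_5}_n}_{L^5_xL^{10}_t}, $$
whose right side tends to $ 0 $ as $ n\to\infty $ by the decoupling hypothesis and the uniform bounds; a monomial containing a $ W^J_n $-factor is bounded by $ \vn{W^J_n}_{L^5_xL^{10}_t} $ times a uniformly bounded factor. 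Summing the finitely many monomials gives $ \lim_{J\to\infty}\limsup_{n\to\infty}\vn{F^J_n}_{L^1_xL^2_t}=0 $. Interpolating the decoupling estimate against the uniform $ \dot X^{1/6} $-bounds $ \vn{\vm{\pt_x}^{1/6}v^j_n}_{L^6_{t,x}}\ls 1 $ supplied by the local theory and Theorem \ref{oscilatory.profiles}, the same computation also yields smallness of $ F^J_n $ in $ L^p_xL^2_t $ for every $ p\in[1,p_0] $, for some $ p_0>1 $.

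Step 3 disposes of $ \vm{\pt_x}^{-1}\pt_x $. Here $ \vm{\pt_x}^{-1}e^J_n=\vm{\pt_x}^{-1}\pt_x F^J_n $, and $ \vm{\pt_x}^{-1}\pt_x $ is the Fourier multiplier with bounded symbol $ i\,\mathrm{sgn}(\xi) $ — a Hilbert transform in $ x $ — hence bounded on $ L^p_x(L^2_t) $ for $ 1<p<\infty $. Thus Step 2 at once gives $ \lim_{J\to\infty}\limsup_{n\to\infty}\vn{\vm{\pt_x}^{-1}e^J_n}_{L^p_xL^2_t}=0 $ for $ 1<p\le p_0 $, and it remains to reach the endpoint $ p=1 $. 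Decompose $ F^J_n $ into Littlewood--Paley pieces $ P^{(x)}_kF^J_n $ in $ x $. On the pieces with $ |\xi|\gtrsim 1 $ the multiplier $ \vm{\pt_x}^{-1}\pt_x $ has $ L^1_x\to L^1_x $ norm $ \ls 1 $, so their total contribution is $ \ls\sum_{k\gtrsim 0}\vn{P^{(x)}_kF^J_n}_{L^1_xL^2_t} $, which is summed by combining the $ \dot X^{1/6} $-regularity of Step 2 (a gain $ 2^{-k/6} $) with a case analysis of the profile parameters: when one of the two orthogonal factors of a monomial carries a divergent frequency $ \xi^j_n\lambda^j_n\to\infty $ or a divergent scale, the monomial is frequency-concentrated and $ \vm{\pt_x}^{-1} $ itself contributes a vanishing factor, whereas in the complementary regime — all factors low-frequency profiles with $ \lambda^j_n\to\infty $ — the mass smallness $ \vn{v^j_n}_{L^2}^2\ls\delta $ of those profiles absorbs the endpoint loss. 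The low-frequency piece $ \vm{\pt_x}^{-1}\pt_x P^{(x)}_{<1}F^J_n $ vanishes for the frequency-concentrated monomials and, for the remaining ones, is controlled again through the mass smallness together with a Besov-refined form of the H\"older/decoupling estimate of Step 2; this finishes \eqref{l1l2lim}.

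The main obstacle is exactly this last step: the quintic error is small in $ L^1_xL^2_t $ and in $ L^p_xL^2_t $ for $ p>1 $, but \emph{not} in a Hardy- or Besov-type space on which $ \vm{\pt_x}^{-1}\pt_x $ would automatically map back into $ L^1_x $, so the $ L^1_x $-endpoint cannot be obtained from the abstract decoupling and the $ L^5_xL^{10}_t $-bounds alone; it genuinely requires exploiting the fine structure of the nonlinear profiles (fast spatial oscillation, scale divergence, or small $ L^2 $-mass), paralleling the corresponding estimate in \cite{killip2009mass}. The remaining ingredients — the algebraic identity $ e^J_n=\pt_x F^J_n $ and the H\"older/decoupling bookkeeping for the quintic — are routine.
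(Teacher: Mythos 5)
Your Steps 1 and 2 reproduce the paper's argument exactly: write the error as the exact $x$-derivative $\partial_x F_n^J$ with $F_n^J = (\tilde u_n^J)^5 - \sum_j (v_n^j)^5$, expand $F_n^J$ into degree-$5$ monomials each of which contains either a $W_n^J$-factor or an asymptotically orthogonal pair $v_n^j v_n^k$ ($j\neq k$), and control each monomial in $L^1_x L^2_t$ using H\"older against the $L^{5/2}_x L^5_t$ decoupling and the $L^5_x L^{10}_t$ bounds. That is the whole proof, both here and in \cite[Lemma 5.3]{killip2009mass}.

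Your Step 3, and the parenthetical $L^p_x L^2_t$ interpolation at the end of Step 2, rest on a misreading of the notation $\vm{\partial_x}^{-1}$. You are right that, read literally, $\vm{\partial_x}^{-1}\partial_x$ is a Hilbert transform in $x$ and is not bounded on $L^1_x L^2_t$; but that literal reading is not what is used. In the short-time stability lemma (Lemma \ref{short.time.stability}) the hypothesis on $e = \partial_x F$ feeds into the Duhamel term through the dual local-smoothing estimate $\vn{\int_0^t e^{-(t-\tau)\partial_x^3}\partial_x F(\tau)\, d\tau}_{L^5_x L^{10}_t \cap L^\infty_t L^2_x} \lesssim \vn{F}_{L^1_x L^2_t}$, so what the stability theory actually requires is a bound on $F$ itself, not on $\vm{\partial_x}^{-1}\partial_x F = (\mathrm{sgn}\,\partial_x) F$. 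The notation $\vn{\vm{\partial_x}^{-1}e}_{L^1_x L^2_t}$ is thus shorthand for ``find $F$ with $e = \partial_x F$ and bound $\vn{F}_{L^1_x L^2_t}$''; this is exactly what Step 2 provides, and no Hardy/Besov endpoint, Littlewood--Paley case analysis, or exploitation of profile structure (fast oscillation, scale divergence, or small mass) is needed or used in \cite{killip2009mass}. Consequently, the ``main obstacle'' you flag at the end is not present, and the sketch you give for Step 3 --- which you yourself acknowledge is incomplete --- is superfluous. Once the notation is read correctly, your Steps 1--2 already constitute the full proof.
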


\begin{proof}
This is proved in \cite[Lemma 5.3]{killip2009mass}. We review the argument for the sake of completeness. One writes 
 $$
\left(\partial_{t}+\partial_{x}^3\right) \tilde{u}_{n}^{J}=\sum_{1 \leq j \leq J} \partial_{x}\left(v_{n}^{j}\right)^{5}.
$$
Thus it suffices to estimate $ (\tilde{u}_{n}^{J})^{5}-\sum_{1 \leq j \leq J} (v_{n}^{j})^{5} $ as follows:
$$
\vn{ \left(\tilde{u}_{n}^{J}-e^{-t \partial_{x}^{3}} w_{n}^{J}\right)^{5}-\left(\tilde{u}_{n}^{J}\right)^{5}}_{L^1_x L^2_t ( \tilde{I} \times \mb{R})} \ls 
\vn{(e^{-t \partial_{x}^{3}} w_{n}^{J})^{5} }_{L^1_x L^2_t ( \tilde{I} \times \mb{R})} +
\vn{ (e^{-t \partial_{x}^{3}} w_{n}^{J}) \left|\tilde{u}_{n}^{J}\right|^{4}}_{L^1_x L^2_t ( \tilde{I} \times \mb{R})},
$$
then one uses Holder's inequality and pass to the limit. Secondly,
$$
\vn{ \big( \sum_{1 \leq j \leq J} v_{n}^{j}\big)^{5}-\sum_{1 \leq j \leq J}\left(v_{n}^{j}\right)^{5} }_{L^1_x L^2_t ( \tilde{I} \times \mb{R})} \ls 
\sum_{i_{1}, i_{2}, i_{3}=1}^{J} \sum_{1 \leq j \neq k \leq J} \vn{ v_{n}^{i_{1}} v_{n}^{i_{2}} v_{n}^{i_{3}} ( v_{n}^{j} v_{n}^{k} )}_{L^1_x L^2_t ( \tilde{I} \times \mb{R})},
$$
and one uses Holder's inequality again to pass to the limit.
This completes the proof.
\end{proof}

\section{Reductions of an almost periodic solution}

Having proved Theorem \ref{almost_periodic_prop}, we have reduced the main result, Theorem \ref{main.thm}, to the case of almost periodic solutions.  
The remainder of the paper is devoted to this case, i.e. proving Theorem \ref{t1.1}.  
We begin with studying $ N(t) $ from Definition \ref{d1.1}. In this section we prove

\begin{theorem}\label{t3.3}
If there exists an almost periodic solution to $(\ref{1.1})$ with $\| u_0 \|_{L^{2}} < \| Q \|_{L^{2}}$, then there exists an almost periodic solution to \eqref{1.1} satisfying \eqref{1.4} on a maximal interval $I$ with $N(t) \geq 1$ on $I$, and
\begin{equation}\label{3.33}
\int_{I} N(t)^{2} dt = \infty.
\end{equation}
Moreover, if the initial solution is $ \delta $-close to $ Q $, then the solution we obtain is also $ \delta $-close to $ Q $. 
\end{theorem}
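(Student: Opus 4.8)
The plan is to take an almost periodic solution $u$ to \eqref{1.1} with $\|u_0\|_{L^2}<\|Q\|_{L^2}$ and normalize it in two stages. First I would recall that by the usual rescaling argument for almost periodic solutions (as in the Keraani--Killip--Visan type reduction), one may apply a self-similar-type or telescoping rescaling to arrange $N(t)\ge 1$ on the (rescaled) maximal interval $I$: concretely, if $\inf_I N(t)=0$ one extracts a sequence of times, rescales by $N(t_n)^{-1}$, and uses the compactness of the orbit $\{v(t,\cdot)\}$ together with the local theory to produce a new almost periodic solution whose $N$ is bounded below, eventually normalizing $\inf N = 1$. The key point to check here is that this rescaling is a symmetry of \eqref{1.1} (the map $u\mapsto \lambda^{1/2}u(\lambda^3 t,\lambda x)$), so the new solution is again a strong, maximal-lifespan, almost periodic solution with the same mass $<\|Q\|_{L^2}$, and $N$ can be taken continuous by Definition \ref{d1.1}.

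Second, I would establish $\int_I N(t)^2\,dt=\infty$. The idea is the standard dichotomy: if $\int_I N(t)^2\,dt<\infty$, then since $N(t)\ge 1$ the interval $I$ has finite length, $|I|<\infty$, so $u$ blows up in finite time (forward or backward), say $\sup I = T^*<\infty$. Combined with $N(t)\ge 1$ and the compactness/concentration of the orbit, finite-time blowup forces $N(t)\to\infty$ as $t\to T^*$; but then $\int N(t)^2$ would be controlled against $\int N(t)\,dt$ or against a Duhamel/local-constancy estimate on $N$ in a way that contradicts $\int N^2<\infty$ — more precisely, the local constancy of $N$ (oscillation of $N$ over an interval of length $\sim N(t)^{-3}$ is bounded) together with $|I|<\infty$ gives $\int_I N(t)^2\,dt=\infty$ once $N(t)\to\infty$, a contradiction. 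Alternatively, if $|I|=\infty$ the claim is immediate since $N\ge1$. So the only surviving scenario is $\int_I N(t)^2\,dt=\infty$, as desired. I should double-check that the reduction does not produce a global solution with $N$ bounded, because such a solution with $N\sim 1$ and infinite-length $I$ already satisfies \eqref{3.33}; the delicate case is genuinely the finite-lifespan one, and there the local constancy bound on $N$ is what I would lean on.

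Finally, for the $\delta$-closeness assertion: I would observe that the $L^2$ distance in \eqref{1.8} is invariant under the symmetry group $G$, since every $g_{x_0,\lambda}\in G$ is unitary on $L^2$ and conjugates the family $\{\lambda_0^{-1/2}Q((\cdot-x_0)/\lambda_0)\}$ to itself. Hence applying the rescaling symmetry from the first step to $u$ preserves the quantity $\sup_t\inf_{\lambda_0,x_0}\|u(t)-\lambda_0^{-1/2}Q((\cdot-x_0)/\lambda_0)\|_{L^2}$, so the rescaled solution remains $\delta$-close to $Q$. For the extraction/compactness step producing a possibly different limiting solution, I would invoke Proposition \ref{L2convergence_prop} (or the closedness of $S(\delta)$ under $L^2$ limits combined with continuous dependence, Corollary \ref{sol.conv}): an $L^2$-limit of $\delta$-close solutions, run through the local theory, is again $\delta$-close, exactly as in the proof of Theorem \ref{almost_periodic_prop}.

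The main obstacle I anticipate is the finite-lifespan case of the second step: making rigorous that $\int_I N(t)^2\,dt<\infty$ with $N\ge 1$ is incompatible with being a genuine almost periodic (hence non-scattering) maximal-lifespan solution. This requires the local constancy of $N(t)$ and a quantitative use of finite-time blowup (the scattering size blows up, forcing concentration and $N\to\infty$), and then a careful bookkeeping that $\int_I N^2$ diverges; getting the constants and the direction of the inequality right is the technical heart of the argument.
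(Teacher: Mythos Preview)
Your argument for \eqref{3.33} has a genuine gap. You claim that if $\int_I N(t)^2\,dt<\infty$ and $N\ge 1$, then $|I|<\infty$, finite-time blowup forces $N(t)\to\infty$, and then local constancy of $N$ (i.e.\ $N$ varies by a bounded factor over intervals of length $\sim N(t)^{-3}$) forces $\int_I N^2=\infty$. But local constancy does \emph{not} give this: the self-similar profile $N(t)\sim (T^*-t)^{-1/3}$ satisfies local constancy perfectly well, blows up at $T^*$, and yet $\int_0^{T^*} N(t)^2\,dt \sim \int_0^{T^*}(T^*-t)^{-2/3}\,dt<\infty$. So your dichotomy argument does not close, and you never use the hypothesis $\|u_0\|_{L^2}<\|Q\|_{L^2}$, which is essential here.

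The paper's proof of \eqref{3.33} (Proposition~\ref{t3.2}) is a Morawetz argument, not a bookkeeping one. Assuming $\int_I N^2=R_0<\infty$, one introduces a localized mass $M(t)=\int\psi(x/R)u^2\,dx$ with a bounded, compactly supported weight derivative, computes $\frac{d}{dt}M(t)$, and after controlling $\int_I\int u^6$ by $\int_I N^2$ via H\"older on unit $L^8$ intervals, obtains $\int_I\int u_x^2\,dx\,dt\lesssim R_0$ uniformly in $R$. This gives $\int_I E(u(t))\,dt<\infty$; but since $\|u_0\|_{L^2}<\|Q\|_{L^2}$, the sharp Gagliardo--Nirenberg inequality makes $E(u)$ strictly positive and comparable to $\|u_x\|_{L^2}^2$, so $E(u_0)<\infty$ is finite and conserved --- contradicting $N(t)\to\infty$. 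The sub-threshold mass assumption is exactly what converts the Morawetz output into a finite-energy bound, and this is the step your proposal is missing. A similar use of energy and Gagliardo--Nirenberg is also needed in the first reduction step (Lemma~\ref{t3.1}) to exclude the self-similar scenario during the extraction of the $N\ge 1$ solution; your sketch of that step (``telescoping rescaling'') does not account for this case either.

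Your treatment of the $\delta$-closeness clause is fine and matches the paper: the symmetry group is unitary and the extraction steps pass through strong $L^2$ limits, so $\delta$-closeness is preserved.
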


\begin{proof}[{\bf Proof of Theorem \ref{t3.3}}]

Using elementary reductions (see \cite{killip2009mass}) it suffices to consider an almost periodic solution to $(\ref{1.1})$ that satisfies $N(t) \leq 1$ for $t \in [0, \infty)$. Such a solution will satisfy one of two properties:
\begin{equation}\label{3.1}
\lim_{T \rightarrow \infty} \inf_{t \in [0, T]} N(t) > 0,
\end{equation}
or
\begin{equation}\label{3.2}
\lim_{T \rightarrow \infty} \inf_{t \in [0, T]} N(t) = 0.
\end{equation}

{\bf 1)} Begin with scenario $(\ref{3.1})$, $N(t) \sim 1$ for any $t \in [0, \infty)$. Thus, there exists a function $x(t) : [0, \infty) \rightarrow \mathbb{R}$ such that
\begin{equation}\label{3.3}
\{ u(t, x - x(t)) : t \in [0, \infty) \}
\end{equation}
lies in a precompact subset of $L^{2}(\mathbb{R})$. Therefore, taking $t_{n} \rightarrow +\infty$ and possibly after passing to a subsequence,
\begin{equation}\label{3.4}
u(t_{n}, x - x(t_{n})) \rightarrow u_{0} \qquad \text{in} \qquad L^{2}(\mathbb{R}),
\end{equation}
and moreover, $u_{0}$ is the initial data for a solution to $(\ref{1.1})$ satisfying
\begin{equation}\label{3.5}
\{ u(t, x - x(t)) : t \in \mathbb{R} \}
\end{equation}
lies in a precompact subset of $L^{2}(\mathbb{R})$.\medskip

{\bf 2)} Now consider scenario $(\ref{3.2})$. Split this scenario into two separate cases:
\begin{equation}\label{3.6}
\limsup_{T \rightarrow \sup(I)} \frac{\sup_{t \in [t_{0}(T), T]} N(t)}{N(t_{0}(T))} < \infty,
\end{equation}
or
\begin{equation}\label{3.7}
\limsup_{T \rightarrow \sup(I)} \frac{\sup_{t \in [t_{0}(T), T]} N(t)}{N(t_{0}(T))} = \infty.
\end{equation}
where
$$
t_{0}(T)=\inf \big\{t \in[0, T]: N(t)=\inf _{t \in[0, T]} N(t)\big\}
$$

Following \cite{dodson2017global}, for any $k \in \mathbb{Z}$, let
\begin{equation}\label{3.8}
t_{k} = \inf \{ t \in [0, T] : N(t) = 2^{-k} \}.
\end{equation}
Since $N(t)$ is a continuous function of time and $(\ref{3.2})$ holds, $t_{k}$ is well-defined. 

{\bf 2A)} When $(\ref{3.6})$ holds, there exists $C < \infty$ such that $N(t) \leq C 2^{-k}$ for any $t \geq t_{k}$.
\begin{lemma}\label{t3.1}
Suppose $(\ref{3.2})$ and $(\ref{3.6})$ hold.
Then the sequence $(t_{k + 1} - t_{k}) \cdot 2^{-3k}$ is unbounded as $k \rightarrow +\infty$.
\end{lemma}
\noindent \emph{Proof:} Suppose that there exists a constant $C_{0}$ such that
\begin{equation}\label{3.9}
(t_{k + 1} - t_{k}) \cdot 2^{-3k} \leq C_{0}.
\end{equation}
Then for any $k \in \mathbb{Z}$,
\begin{equation}\label{3.10}
t_{k} \lesssim C_{0} 2^{3k}.
\end{equation}
Meanwhile, as in  the scaling symmetry implies
\begin{equation}\label{3.11}
t_{k} \gtrsim 2^{3k}.
\end{equation}
Therefore, for any $k$,
\begin{equation}\label{3.12}
N(t_{k}) \sim t_{k}^{-1/3}.
\end{equation}
As in \cite{dodson2017global}, $(\ref{3.12})$ implies that after passing to another subsequence, we have a solution $u$ to $(\ref{1.1})$ satisfying $N(t) \sim t^{-1/3}$ for any $t \geq 0$. Moreover, following the exact arguments in Section five of \cite{dodson2017global} shows that the self similar solution $u(t,x)$ satisfies the estimate
\begin{equation}\label{3.13}
E(u) \lesssim 1.
\end{equation}
However, by the Gagliardo-Nirenberg inequality, this contradicts $N(t) \nearrow +\infty$ as $t \searrow 0$. $\Box$\medskip

Now take a sequence $t_{k} \rightarrow \infty$ such that
\begin{equation}\label{3.14}
(t_{k + 1} - t_{k}) \cdot 2^{-3k} \rightarrow +\infty.
\end{equation}
In this case, $(\ref{3.6})$ guarantees that $N(t) \sim 2^{-k}$ for any $t_{k} < t < t_{k + 1}$. Choose the sequence of times $t_{k}' = \frac{t_{k} + t_{k + 1}}{2}$. After passing to a subsequence,
\begin{equation}\label{3.15}
2^{k/2} u(t_{k}', 2^{k}(x - x(t_{k}'))) \rightarrow u_{0}, \qquad \text{in} \qquad L^{2}(\mathbb{R}),
\end{equation}
and furthermore, $u_{0}$ is the initial data of a solution to $(\ref{1.1})$ satisfying
\begin{equation}\label{3.16}
\{ u(t, x - x(t)) : t \in \mathbb{R} \}
\end{equation}
lies in a precompact subset of $L^{2}(\mathbb{R})$.\medskip

{\bf 2B)} Finally, consider the case when $(\ref{3.2})$ and $(\ref{3.7})$ hold. In this case, possibly after passing to a subsequence,
\begin{equation}\label{3.17}
2^{k/2} u(t_{k}, 2^{k}(x - x(t_{k}))) \rightarrow u_{0}, \qquad \text{in} \qquad L^{2}(\mathbb{R}),
\end{equation}
where $u_{0}$ is the initial data of a solution to $(\ref{1.1})$ on an interval $I$ such that
\begin{equation}\label{3.18}
\{ N(t)^{-1/2} u(t, N(t)^{-1} x + x(t))) : t \in I \}
\end{equation}
lies in a precompact subset of $L^{2}(\mathbb{R})$, and moreover, $N(t) \geq 1$ for all $t \in I$.

\begin{proposition}\label{t3.2}
If $u$ is an almost periodic solution to $(\ref{1.1})$ with $\| u \|_{L^{2}} < \| Q \|_{L^{2}}$ on a maximal interval $I \subset \mathbb{R}$ that satisfies $N(t) \geq 1$ for all $t \in I$, and $N(0) = 1$, then
\begin{equation}\label{3.19}
\int_{I} N(t)^{2} dt = \infty.
\end{equation}
\end{proposition}
\noindent \emph{Proof:} Again following \cite{dodson2017global}, suppose
\begin{equation}\label{3.20}
\int_{I} N(t)^{2} dt = R_{0} < \infty.
\end{equation}
Translating in space so that $x(0) = 0$, define the Morawetz potential
\begin{equation}\label{3.21}
M(t) = \int \psi(\frac{x}{R}) u(t,x)^{2} dx,
\end{equation}
where
\begin{equation}\label{3.22}
\psi(x) = \int_{0}^{x} \phi(t) dt,
\end{equation}
where $\phi$ is a smooth, even function, $\phi(x) = 1$ for $-1 \leq x \leq 1$, and $\phi$ is supported on $|x| \leq 2$.\medskip

Since $N(t) \geq 1$ and $\int_{I} N(t)^{2} dt < \infty$, $I$ is necessarily a finite interval. Therefore, $N(t) \nearrow +\infty$ as $t \rightarrow \sup(I)$ or $t \rightarrow \inf(I)$. Combining this with the fact that $|\dot{x}(t)| \lesssim N(t)^{2}$,
\begin{equation}\label{3.23}
\sup_{t \in I} |M(t)| \lesssim R_{0},
\end{equation}
with implicit constant independent of $R$. Moreover, by direct computation,
\begin{equation}\label{3.24}
\frac{d}{dt} M(t) = -3 \int \phi(\frac{x}{R}) u_{x}(t,x)^{2} dx + \frac{1}{R^{2}} \int \phi''(\frac{x}{R}) u(t,x)^{2} dx + \frac{5}{3} \int \phi(\frac{x}{R}) u(t,x)^{6} dx.
\end{equation}
Therefore, by the fundamental theorem of calculus,
\begin{equation}\label{3.25}
\int_{I} \int \phi(\frac{x}{R}) u_{x}(t,x)^{2} dx dt \lesssim R_{0} + \frac{|I|}{R^{2}} + \int_{I} \int u(t,x)^{6} dx dt.
\end{equation}
We have already demonstrated that the first two terms on the right hand side are uniformly bounded for any $R \geq 1$. So it remains to control the third term.\medskip

Partition $I$ into consecutive intervals
\begin{equation}\label{3.26}
I = \cup_{k} J_{k},
\end{equation}
where
\begin{equation}\label{3.27}
\int_{J_{k}} \int u(t,x)^{8} dx dt \sim 1.
\end{equation}
Using standard perturbation arguments, for any fixed $J_{k}$ with $t_{1}, t_{2} \in J_{k}$
\begin{equation}\label{3.28}
N(t_{1}) \sim N(t_{2}), \qquad \text{and} \qquad |t_{1} - t_{2}| \lesssim N(t_{1})^{-3}.
\end{equation}
Therefore, by H{\"o}lder's inequality,
\begin{equation}\label{3.29}
\int_{J_{k}} \int u(t,x)^{6} dx dt \lesssim |J_{k}|^{1/3} \| u \|_{L_{t}^{\infty} L_{x}^{2}}^{2/3} \| u \|_{L_{t,x}^{8}(J_{k} \times \mathbf{R})}^{16/3} \lesssim |J_{k}|^{1/3} \lesssim \int_{J_{k}} N(t)^{2} dt.
\end{equation}
Therefore,
\begin{equation}\label{3.30}
\int_{I} \int \phi(\frac{x}{R}) u_{x}(t,x)^{2} dx dt \lesssim R_{0} + \frac{|I|}{R^{2}}.
\end{equation}
Taking $R \rightarrow \infty$,
\begin{equation}\label{3.31}
\int_{I} \int u_{x}(t,x)^{2} dx dt \lesssim R_{0}.
\end{equation}
Therefore, by the Gagliardo-Nirenberg inequality, when $\| u \|_{L^{2}} < \| Q \|_{L^{2}}$, by conservation of energy,
\begin{equation}\label{3.32}
\int_{I} E(u(t)) dt = |I| E(u_{0}) \lesssim R_{0}.
\end{equation}
However, when $\| u_{0} \|_{L^{2}} < \| Q \|_{L^{2}}$, conservation of energy combined with $(\ref{3.32})$ contradicts the fact that $N(t)$ is unbounded on $I$, which completes the proof of Proposition $\ref{t3.2} $. $\Box$\medskip

Since the subsequence in the above analysis always converges strongly in $L^{2}$ to $u_{0}$, if we begin with an $ \delta $-close to $ Q $ solution, then the solution that we obtain is also $ \delta $-close to $ Q $.
This completes the proof of Theorem \ref{t3.3}.
\end{proof}

\section{Decomposition of the solution near a soliton}

Since after rescaling and translation, $u$ is close to $Q$, we can use a decomposition lemma of \cite{martel2002stability}. This lemma was proved when $u$ was close to $Q$ in $H^{1}$ norm, however, it is possible to prove a slightly weaker result when $u$ is merely close in $L^{2}$ norm.
\begin{lemma}\label{l2.2}
There exists $\delta > 0$ such that if
\begin{equation}\label{2.5}
\| u - \lambda_{0}(t)^{-1/2} Q(\frac{x - x_{0}(t)}{\lambda_{0}(t)}) \|_{L^{2}} < 2 \delta,
\end{equation}
then there exist $x(t)$ and $\lambda(t)$ such that
\begin{equation}\label{2.6}
\epsilon(t, y) := \lambda(t)^{1/2} u(t, \lambda(t) y + x(t)) - Q(y)
\end{equation}
satisfies
\begin{equation}\label{2.7}
(y Q_{y}, \epsilon) = (y(\frac{Q}{2} + y Q_{y}), \epsilon) = 0.
\end{equation}
Moreover,
\begin{equation}\label{2.8}
|\frac{\lambda_{0}(t)}{\lambda(t)} - 1| + |\frac{x_{0}(t) - x(t)}{\lambda(t)}| + \| \epsilon \|_{L^{2}} \lesssim \delta.
\end{equation}
\end{lemma}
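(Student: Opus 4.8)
The plan is to set up the orthogonality conditions \eqref{2.7} as the vanishing of a smooth map and invoke the implicit function theorem, in the spirit of the modulation arguments of Martel--Merle. First I would introduce, for a function $v$ close to $Q$ in $L^2$ and for parameters $(\mu, a) \in (0,\infty) \times \mathbb{R}$ close to $(1,0)$, the rescaled error
$$
\mathcal{E}_{v}(\mu, a)(y) := \mu^{1/2} v(\mu y + a) - Q(y),
$$
and define the two-component functional
$$
F(v;\mu,a) := \Big( (yQ_y, \mathcal{E}_v(\mu,a)),\ \big(y(\tfrac{Q}{2}+yQ_y), \mathcal{E}_v(\mu,a)\big) \Big) \in \mathbb{R}^2.
$$
One checks that $F$ is continuous in $v \in L^2$ and $C^1$ in $(\mu,a)$ near $(1,0)$ — differentiating under the integral is justified because $yQ_y$, $y^2Q_y$, $yQ$ are Schwartz and the rescaling acts boundedly on $L^2$, with derivatives in $(\mu,a)$ producing again rapidly decaying weights against which the $L^2$ function $v$ is paired. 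When $v = \lambda_0^{-1/2}Q((\cdot - x_0)/\lambda_0)$ with $(\lambda_0,x_0)$ close to $(1,0)$, the choice $(\mu,a)$ making $\mathcal{E}_v \equiv 0$ is $\mu = \lambda_0$, $a = x_0$, so in particular $F(Q;1,0) = 0$.

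The next step is to compute the Jacobian of $(\mu,a) \mapsto F(Q;\mu,a)$ at $(1,0)$. Writing $\partial_\mu \mathcal{E}_Q(\mu,a)|_{(1,0)} = \tfrac12 Q + yQ_y$ and $\partial_a \mathcal{E}_Q(\mu,a)|_{(1,0)} = Q_y$, the Jacobian matrix is
$$
\begin{pmatrix} (yQ_y, \tfrac12 Q + yQ_y) & (yQ_y, Q_y) \\ (y(\tfrac{Q}{2}+yQ_y), \tfrac12 Q + yQ_y) & (y(\tfrac{Q}{2}+yQ_y), Q_y) \end{pmatrix}.
$$
Here the orthogonality conditions \eqref{2.7} have been chosen precisely so that this matrix is invertible: integrating by parts and using the ODE \eqref{1.7} for $Q$, the off-diagonal and diagonal entries reduce to computable multiples of $\|Q\|_{L^2}^2$ (equivalently $M(Q)$), and one finds the determinant is a nonzero constant. (This is the standard fact that $yQ_y$ and $\tfrac{Q}{2}+yQ_y$ span the tangent space to the two-parameter family of rescalings/translations of $Q$ in a way transverse to the chosen functionals; I would verify it by the explicit integral identities rather than quoting it.) By the implicit function theorem there exist neighborhoods and a $C^1$ map $(\mu,a) = (\mu,a)(v)$ with $F(v;\mu(v),a(v)) = 0$, defined and Lipschitz for $v$ in an $L^2$-ball of radius $\sim\delta$ around $Q$.

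Finally I would translate this back to the statement. Given $u$ satisfying \eqref{2.5}, apply the above to $v(y) := \lambda_0(t)^{1/2} u(t, \lambda_0(t) y + x_0(t))$, which is within $2\delta$ of $Q$ in $L^2$, to get $(\mu,a)$; then set $\lambda(t) := \lambda_0(t)\mu$ and $x(t) := x_0(t) + \lambda_0(t) a$, so that $\epsilon(t,y) = \lambda(t)^{1/2} u(t,\lambda(t)y + x(t)) - Q(y) = \mathcal{E}_v(\mu,a)(y)$ satisfies \eqref{2.7}. The quantitative bound \eqref{2.8} follows from Lipschitz continuity of the implicit map at $Q$: $|\mu - 1| + |a| \lesssim \|v - Q\|_{L^2} \lesssim \delta$, which unwinds to $|\lambda_0/\lambda - 1| + |(x_0 - x)/\lambda| \lesssim \delta$, and then $\|\epsilon\|_{L^2} \leq \|\mathcal{E}_v(1,0)\|_{L^2} + \|\mathcal{E}_v(\mu,a) - \mathcal{E}_v(1,0)\|_{L^2} \lesssim \|v-Q\|_{L^2} + (|\mu-1|+|a|) \lesssim \delta$, where the second difference is controlled using $C^1$ dependence in $(\mu,a)$ together with the boundedness of $v$ in $L^2$.

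The main obstacle I anticipate is purely the $L^2$ (rather than $H^1$) regularity: one must be careful that every object paired against $u$ or its rescalings is a fixed Schwartz weight, so that no derivative ever falls on $u$, and that the differentiation in $(\mu,a)$ genuinely lands on the smooth profile $Q$ (or on the smooth weights) and not on $v$; this is what forces the specific form of \eqref{2.6} and \eqref{2.7} and is the reason the conclusion \eqref{2.8} is "slightly weaker" — one controls $\|\epsilon\|_{L^2}$ but not $\|\epsilon\|_{H^1}$. The determinant computation for the Jacobian is the other point requiring genuine (though routine) care, since a sign or factor error there would break invertibility.
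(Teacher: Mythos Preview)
Your plan is correct and matches the paper's proof almost exactly: both set up the two orthogonality conditions as a map $F(v;\mu,a)$, verify $C^1$ dependence in $(\mu,a)$ by integrating by parts so that derivatives land on the Schwartz weights rather than on $v\in L^2$, compute the Jacobian at $(Q;1,0)$, invoke the implicit function theorem, and then undo the preliminary rescaling by $(\lambda_0,x_0)$. One minor sharpening: the Jacobian you write down is in fact \emph{diagonal} by parity (since $Q$ is even, $yQ_y^2$ and $y(\tfrac{Q}{2}+yQ_y)^2$ are odd), with both diagonal entries equal to $\int(\tfrac{Q}{2}+yQ_y)^2\,dy>0$ rather than multiples of $\|Q\|_{L^2}^2$; this is exactly what the paper computes in \eqref{2.18}.
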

\begin{remark}\label{r2.2}
Observe that by $(\ref{2.8})$, almost periodicity (according to Definition $\ref{d1.1}$) is maintained with the new $x(t)$ and $N(t) = \frac{1}{\lambda(t)}$.
\end{remark}
\noindent \emph{Proof:} Use the implicit function theorem. For  $\delta > 0$, let
\begin{equation}\label{2.9}
 U_{\delta}  = \{ u \in L^{2} : \| u - Q \|_{L^{2}} < 2 \delta \},
\end{equation}
and for $u \in L^{2}(\mathbb{R})$, $\lambda_{1} > 0$, $x_{1} \in \mathbb{R}$, define
\begin{equation}\label{2.10}
\epsilon_{\lambda_{1}, x_{1}}(y) = \lambda_{1}^{1/2} u(\lambda_{1} y + x_{1}) - Q.
\end{equation}
Define the functionals
\begin{equation}\label{2.11}
\rho_{\lambda_{1}, x_{1}}^{1}(u) = \int \epsilon_{\lambda_{1}, x_{1}} (y Q_{y}) dy, \qquad \rho_{\lambda_{1}, x_{1}}^{2}(u) = \int \epsilon_{\lambda_{1}, x_{1}} (y (\frac{Q}{2} + y Q_{y})) dy.
\end{equation}
Then by direct computation,
\begin{equation}\label{2.12}
\frac{\partial \epsilon_{\lambda_{1}, x_{1}}}{\partial x_{1}} = \lambda_{1}^{1/2} u_{x}(\lambda_{1} y + x_{1}),
\end{equation}
and
\begin{equation}\label{2.13}
\frac{\partial \epsilon_{\lambda_{1}, x_{1}}}{\partial \lambda_{1}} = \frac{1}{2} \lambda_{1}^{-1/2} u(\lambda_{1} y + x_{1}) + \lambda_{1}^{1/2} y u_{x}(\lambda_{1} y + x_{1}).
\end{equation}

Integrating by parts,
\begin{equation}\label{2.14}
\frac{\partial \rho_{\lambda_{1}, x_{1}}^{1}}{\partial x_{1}} = \int \lambda_{1}^{1/2} u_{x}(\lambda_{1} y + x_{1}) (y Q_{y})(y) dy = -\int \lambda_{1}^{-1/2} u(\lambda_{1} y + x_{1}) (y Q_{yy} + Q_{y})(y) dy,
\end{equation}
\begin{equation}\label{2.15}
\frac{\partial \rho_{\lambda_{1}, x_{1}}^{2}}{\partial x_{1}} = \int \lambda_{1}^{1/2} u_{x}(\lambda_{1} y + x_{1}) (\frac{y}{2} Q + y^{2} Q_{y})(y) dy = - \int \lambda_{1}^{-1/2} u(\lambda_{1} y + x_{1}) (\frac{Q}{2} + \frac{5y}{2} Q_{y} + y^{2} Q_{yy})(y) dy,
\end{equation}
\begin{equation}\label{2.16}
\aligned
\frac{\partial \rho_{\lambda_{1}, x_{1}}^{1}}{\partial \lambda_{1}} = \int [\frac{1}{2} \lambda_{1}^{-1/2} u(\lambda_{1} y + x_{1}) + \lambda_{1}^{1/2} y u_{x}(\lambda_{1} y + x_{1})] (y Q_{y})(y) dy \\ = \int \frac{1}{2} \lambda_{1}^{-1/2} u(\lambda_{1} y + x_{1}) y Q_{y}(y) dy -\int \lambda_{1}^{-1/2} u(\lambda_{1} y + x_{1}) y^{2} Q_{yy}(y) dy \\ -2 \int \lambda_{1}^{-1/2} u(\lambda_{1} y + x_{1}) y Q_{y}(y) dy,
\endaligned
\end{equation}
and
\begin{equation}\label{2.17}
\aligned
\frac{\partial \rho_{\lambda_{1}, x_{1}}^{2}}{\partial \lambda_{1}} = \int  [\frac{1}{2} \lambda_{1}^{-1/2} u(\lambda_{1} y + x_{1}) + \lambda_{1}^{1/2} y u_{x}(\lambda_{1} y + x_{1})] (\frac{y}{2} Q + y^{2} Q_{y})(y) dy \\ = \frac{1}{2} \int \lambda_{1}^{-1/2} u(\lambda_{1} x + x_{1}) (\frac{y}{2} Q + y^{2} Q_{y})(y) dy - \int \lambda_{1}^{-1/2} u(\lambda_{1} y + x_{1}) (y Q + \frac{7}{2} y Q_{y} + y^{3} Q_{y})(y) dy.
\endaligned
\end{equation}
This implies that $(\rho_{\lambda_{1}, x_{1}}^{1}, \rho_{\lambda_{1}, x_{1}}^{2})$ are $C^{1}$ functions of $(\lambda_{1}, x_{1})$.\medskip

Also,
\begin{equation}\label{2.18}
\aligned
\frac{\partial \rho_{\lambda_{1}, x_{1}}^{1}}{\partial x_{1}}|_{\lambda_{1} = 1, x_{1} = 0, u = Q} = \int Q_{y} \cdot y Q_{y} dy = 0, \\
\frac{\partial \rho_{\lambda_{1}, x_{1}}^{1}}{\partial \lambda_{1}}|_{\lambda_{1} = 1, x_{1} = 0, u = Q} = \int Q_{y} \cdot y(\frac{Q}{2} + y Q_{y}) = \int (\frac{Q}{2} + y Q_{y})^{2} dy > 0, \\
\frac{\partial \rho_{\lambda_{1}, x_{1}}^{2}}{\partial \lambda_{1}}|_{\lambda_{1} = 1, x_{1} = 0, u = Q} = \int (\frac{Q}{2} + y Q_{y}) y Q_{y} = \int (\frac{Q}{2} + y Q_{y})^{2} dy > 0, \\
\frac{\partial \rho_{\lambda_{1}, x_{1}}^{2}}{\partial x_{1}}|_{\lambda_{1} = 1, x_{1} = 0, u = Q} = \int  (\frac{Q}{2} + y Q_{y}) y(\frac{Q}{2} + y Q_{y}) dy = 0.
\endaligned
\end{equation}
Therefore, by the implicit function theorem, if
\begin{equation}\label{2.18.1}
\| u(x) - Q(x) \|_{L^{2}} < 2 \delta,
\end{equation}
then there exist $\lambda$, $x$ such that
\begin{equation}\label{2.19}
|\lambda - 1| + |x| + \| \epsilon \|_{L^{2}} \lesssim \| u - Q \|_{L^{2}} < 2 \delta,
\end{equation}
satisfying
\begin{equation}\label{2.20}
(\epsilon, y Q_{y}) = (\epsilon, y(\frac{Q}{2} + y Q_{y})) = 0.
\end{equation}

Now take a general $\lambda_{0}(t)$ and $x_{0}(t)$ such that
\begin{equation}\label{2.25}
\| u(y) - \lambda_{0}^{-1/2} Q(\frac{y - x_{0}}{\lambda_{0}}) \|_{L^{2}} < 2 \delta.
\end{equation}
Then after translation and rescaling,
\begin{equation}\label{2.26}
\| \lambda_{0}^{1/2} u(\lambda_{0} y + x_{0}) - Q(y) \|_{L^{2}} < 2 \delta.
\end{equation}
Then there exist $|\tilde{x}| + |\tilde{\lambda}| \lesssim \| u - Q \|_{L^{2}}$ such that
\begin{equation}\label{2.27}
\aligned
(\lambda_{0}^{1/2} (1 - \tilde{\lambda})^{1/2} u(\lambda_{0} (1 - \tilde{\lambda}) y + \lambda_{0} \tilde{x} + x_{0})&, x Q_{x}) = 0, \\
(\lambda_{0}^{1/2} (1 - \tilde{\lambda})^{1/2} u(\lambda_{0} (1 - \tilde{\lambda}) y + \lambda_{0} \tilde{x} + x_{0})&, x(\frac{Q}{2} + x Q_{x})) = 0.
\endaligned
\end{equation}
Since $|\tilde{\lambda}| \lesssim \delta$, $|\frac{\lambda_{0} (1 - \tilde{\lambda})}{\lambda_{0}}| \lesssim \delta$. Also, $|\lambda_{0} \tilde{x}| \lesssim \lambda_{0} \delta$, so $\frac{|x_{0}' - x_{0}|}{\lambda_{0}} \lesssim \delta$. This completes the proof of Lemma $\ref{l2.2}$. $\Box$\medskip

Introduce the variable
\begin{equation}\label{2.28}
s = \int_{0}^{t} \frac{dt'}{\lambda(t')^{3}}, \qquad \text{equivalently} \qquad \frac{ds}{dt} = \frac{1}{\lambda^{3}}.
\end{equation}

\begin{lemma}[Properties of the decomposition]\label{l2.3}
$(1)$ The function $\epsilon(s, y)$ satisfies the equation
\begin{equation}\label{2.30}
\aligned
\epsilon_{s} = (L \epsilon)_{y} + \frac{\lambda_{s}}{\lambda} (\frac{Q}{2} + y Q_{y}) + (\frac{x_{s}}{\lambda} - 1) Q_{y}
+ \frac{\lambda_{s}}{\lambda} (\frac{\epsilon}{2} + y \epsilon_{y}) + (\frac{x_{s}}{\lambda} - 1) \epsilon_{y} - (R(\epsilon))_{y},
\endaligned
\end{equation}
where
\begin{equation}\label{2.31}
L \epsilon = -\epsilon_{xx} + \epsilon - 5 Q^{4} \epsilon, \qquad \text{and} \qquad R(\epsilon) = 10 Q^{3} \epsilon^{2} + 10 Q^{2} \epsilon^{3} + 5 Q \epsilon^{4} + \epsilon^{5}.
\end{equation}

$(2)$ $\lambda$ and $x$ are $C^{1}$ functions of $s$ and
\begin{equation}\label{2.32}
\aligned
\frac{\lambda_{s}}{\lambda} (\int (\frac{Q}{2} + y Q_{y})^{2} dy - \int (2 y Q_{y} + y^{2} Q_{yy}) \epsilon dy) - (\frac{x_{s}}{\lambda} - 1) \int (y Q_{yy} + Q_{y}) \epsilon dy \\ = \int L(y Q_{yy} + Q_{y}) \cdot \epsilon dy - \int  R(\epsilon) (y Q_{y}) dy,
\endaligned
\end{equation}
and
\begin{equation}\label{2.33}
\aligned
-\frac{\lambda_{s}}{\lambda} \int \epsilon(y Q + \frac{7y^{2}}{2} Q_{y} + y^{3} Q_{yy}) dy + (\frac{x_{s}}{\lambda} - 1) (\int (\frac{Q}{2} + y Q_{y})^{2} dy - \int (\frac{Q}{2} + \frac{5y}{2} Q_{y} + y^{2} Q_{yy}) \epsilon dy) \\ = \int L(\frac{Q}{2} + \frac{5y}{2} Q_{y} + y^{2} Q_{yy}) \cdot \epsilon dy - \int (\frac{Q}{2} + \frac{5y}{2} Q_{y} + y^{2} Q_{yy}) R(\epsilon) dy.
\endaligned
\end{equation}
\end{lemma}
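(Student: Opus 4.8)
\emph{Part (1): the equation for $\epsilon$.} The plan is to substitute the decomposition directly into \eqref{1.1}. Write $w(s,y)=Q(y)+\epsilon(s,y)$, so that $u(t,x)=\lambda(t)^{-1/2}w\big(s,\tfrac{x-x(t)}{\lambda(t)}\big)$. Using the chain rule together with $\tfrac{ds}{dt}=\lambda^{-3}$ and $\tfrac{\partial y}{\partial t}=-\lambda^{-4}(x_s+y\lambda_s)$, one computes
$$u_t=\lambda^{-9/2}\big[\lambda w_s-\tfrac{\lambda_s}{2}w-(x_s+y\lambda_s)w_y\big],\qquad -(u_{xx}+u^5)_x=-\lambda^{-7/2}(w_{yy}+w^5)_y.$$
Equating and dividing by $\lambda$ recasts gKdV as
$$w_s=-(w_{yy}+w^5)_y+\tfrac{\lambda_s}{2\lambda}\,w+\tfrac{\lambda_s}{\lambda}\,y\,w_y+\tfrac{x_s}{\lambda}\,w_y.$$
Then I would expand $w=Q+\epsilon$: since $(Q+\epsilon)^5=Q^5+5Q^4\epsilon+R(\epsilon)$ and $Q_{yy}+Q^5=Q$ by \eqref{1.7}, one has $w_{yy}+w^5=(Q+\epsilon)-L\epsilon+R(\epsilon)$, hence $-(w_{yy}+w^5)_y=-(Q_y+\epsilon_y)+(L\epsilon)_y-(R(\epsilon))_y$; as $w_s=\epsilon_s$, collecting the terms proportional to $\tfrac{\lambda_s}{\lambda}$ and to $\tfrac{x_s}{\lambda}-1$ gives exactly \eqref{2.30}. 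The manipulation is legitimate because $u$ is a strong solution (so the identity holds in the Duhamel/distributional sense) and $Q$ is smooth.

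\emph{Part (2): the modulation equations and $C^1$ regularity.} Here I would differentiate the two orthogonality conditions \eqref{2.7} in $s$, obtaining $(yQ_y,\epsilon_s)=0$ and $(y(\tfrac Q2+yQ_y),\epsilon_s)=0$. Substituting \eqref{2.30}, integrating by parts to move the $y$-derivatives onto the fixed test functions, and using the self-adjointness of $L$, produces two linear equations in $\tfrac{\lambda_s}{\lambda}$ and $\tfrac{x_s}{\lambda}-1$. The constant coefficients are evaluated exactly as in \eqref{2.18} — e.g.\ $\int yQ_y(\tfrac Q2+yQ_y)\,dy=\int(\tfrac Q2+yQ_y)^2\,dy$, while $\int y\,Q_y^2\,dy=\int y(\tfrac Q2+yQ_y)^2\,dy=0$ by parity — and the $\epsilon$-dependent corrections come from pairings such as $(yQ_y,\tfrac\epsilon2+y\epsilon_y)$ after one integration by parts and one use of the orthogonality conditions themselves; this yields \eqref{2.32}--\eqref{2.33}. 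For the $C^1$ claim: by \eqref{2.8} the $2\times2$ coefficient matrix of this system is an $O(\delta)$ perturbation of $\big(\int(\tfrac Q2+yQ_y)^2\big)\,\mathrm{Id}$, hence invertible for $\delta$ small, and its entries and right-hand side are continuous in $s$; solving shows $\tfrac{\lambda_s}{\lambda}$ and $\tfrac{x_s}{\lambda}-1$ are continuous functions of $s$. Since $\lambda$ and $x$ are continuous by Lemma \ref{l2.2} and satisfy this first-order relation, they are $C^1$ in $s$, and hence $C^1$ in $t$ via \eqref{2.28}.

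\emph{Main obstacle.} The delicate point is justifying the differentiation in Part (2) rigorously, and controlling the nonlinear pairings $\int R(\epsilon)\,(\cdot)\,dy$, given that the decomposition only supplies $\epsilon\in C^0_sL^2_y$ rather than the $H^1$ regularity used in \cite{martel2002stability} — precisely the loss flagged just before the lemma. As $R(\epsilon)$ contains $\epsilon^2,\dots,\epsilon^5$, a crude estimate would demand $\epsilon\in L^p_y$ up to $p=5$. I would handle this by (i) pairing the Duhamel form of \eqref{1.1} with the relevant smooth, exponentially decaying weights, which only involves $u^5$ through $L^1_xL^2_t$ — a norm already controlled by the local theory, see Lemma \ref{short.time.stability} — to obtain the $C^1_s$ regularity of the scalar quantities $(yQ_y,\epsilon(s))$ and its analogue; and (ii) using the extra regularity a strong solution carries beyond $C^0_tL^2_x$, namely the $L^5_xL^{10}_t$ Strichartz bound and the $|\partial_x|^{1/6}L^6_{t,x}$ smoothing of the stability theory, to place $\epsilon$ in mixed-norm spaces making $\int R(\epsilon)(yQ_y)\,dy$ and its relatives finite and continuous in $s$. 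The remaining ingredients — the parity identities, the integrations by parts, and the computation of the constant coefficients — are routine.
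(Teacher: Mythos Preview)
Your proposal is correct and follows the standard derivation from \cite{martel2002stability}, which is precisely what the paper does: its entire proof of this lemma is the one-line citation ``See \cite{martel2002stability}''. Your computation in Part~(1) is accurate, and your strategy in Part~(2) --- differentiate the orthogonality constraints, substitute \eqref{2.30}, integrate by parts, and invert the nearly-diagonal $2\times 2$ system --- is exactly the argument in that reference.

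The one place you go beyond the paper is in flagging and sketching a resolution of the $L^2$-versus-$H^1$ regularity issue for the nonlinear pairings $\int R(\epsilon)\,(yQ_y)\,dy$ and the $C^1$ justification. The paper does not address this explicitly here (it simply invokes the $H^1$ result of \cite{martel2002stability}), but your proposed fix via the $L^5_xL^{10}_t$ and $L^8_{t,x}$ Strichartz control of strong solutions is consistent with how the paper later handles exactly these quantities in Corollary~\ref{c2.4} and \eqref{2.40}--\eqref{2.43}. So your treatment is, if anything, more careful than the paper's at this step.
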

\emph{Proof:} See \cite{martel2002stability}. $\Box$\medskip

This lemma has an important corollary.
\begin{corollary}\label{c2.4}
For all $s \in \mathbb{R}$,
\begin{equation}\label{2.35}
|\frac{\lambda_{s}}{\lambda}| + |\frac{x_{s}}{\lambda} - 1| \lesssim \| \epsilon \|_{L^{2}} + \| \epsilon \|_{L^{2}} \| \epsilon \|_{L^{8}}^{4}.
\end{equation}
\end{corollary}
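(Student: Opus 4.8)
The plan is to treat the modulation identities \eqref{2.32}--\eqref{2.33} as a linear system for the pair $a := \lambda_{s}/\lambda$ and $b := x_{s}/\lambda - 1$ and to invert it. Writing the system as $M(s)\binom{a}{b} = \binom{F_{1}}{F_{2}}$, the coefficient matrix is
\[
M = \begin{pmatrix} c_{0} + O(\|\epsilon\|_{L^{2}}) & O(\|\epsilon\|_{L^{2}}) \\ O(\|\epsilon\|_{L^{2}}) & c_{0} + O(\|\epsilon\|_{L^{2}}) \end{pmatrix}, \qquad c_{0} := \int \Bigl(\tfrac{Q}{2} + yQ_{y}\Bigr)^{2} dy > 0,
\]
the off-diagonal and error entries being the integrals $\int (\cdots)\epsilon\,dy$ appearing in \eqref{2.32}--\eqref{2.33}: in each of them the weight is a polynomial in $y$ times a derivative of $Q$, hence lies in $L^{2}$ by the exponential decay of $Q$ and its derivatives, so Cauchy--Schwarz bounds the integral by a constant times $\|\epsilon\|_{L^{2}}$. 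Since $c_{0} > 0$ by \eqref{2.18} and $\|\epsilon\|_{L^{2}} \lesssim \delta$ by \eqref{2.8}, for $\delta$ small one has $\det M = c_{0}^{2} + O(\|\epsilon\|_{L^{2}}) \ge c_{0}^{2}/2$ and all entries of $M^{-1}$ are $O(1)$; therefore $|a| + |b| \lesssim |F_{1}| + |F_{2}|$ and it remains to estimate the right-hand sides.

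Each $F_{i}$ is the sum of a linear term $\int L(g_{i})\cdot\epsilon\,dy$ and a nonlinear term $\int R(\epsilon)\,g_{i}\,dy$, where $g_{i}$ is once more a polynomial times a derivative of $Q$ (for instance $g_{1} = yQ_{y}$). For the linear term, $L(g_{i}) = -(g_{i})_{yy} + g_{i} - 5Q^{4} g_{i}$ is smooth and exponentially decaying, so $\int L(g_{i})\epsilon\,dy \lesssim \|\epsilon\|_{L^{2}}$ by Cauchy--Schwarz. For the nonlinear term I would first record the pointwise bound $|R(\epsilon)| \lesssim Q^{3}\epsilon^{2} + |\epsilon|^{5}$, which follows from \eqref{2.31} by writing the intermediate monomials $Q^{2}|\epsilon|^{3}$ and $Q|\epsilon|^{4}$ as geometric means of $Q^{3}\epsilon^{2}$ and $|\epsilon|^{5}$ and applying Young's inequality. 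Then, using $|g_{i}| \lesssim e^{-c|y|}$ and $Q^{3}|g_{i}| \in L^{\infty}$,
\[
\int |R(\epsilon)|\,|g_{i}|\,dy \;\lesssim\; \|\epsilon\|_{L^{2}}^{2} + \int |\epsilon|\cdot\bigl(|\epsilon|^{4} e^{-c|y|}\bigr)\,dy \;\lesssim\; \|\epsilon\|_{L^{2}}^{2} + \|\epsilon\|_{L^{2}}\,\|\epsilon\|_{L^{8}}^{4},
\]
the last step being Cauchy--Schwarz together with $\big\||\epsilon|^{4} e^{-c|y|}\big\|_{L^{2}} \le \|\epsilon\|_{L^{8}}^{4}$. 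Since $\|\epsilon\|_{L^{2}}^{2} \lesssim \delta\|\epsilon\|_{L^{2}} \le \|\epsilon\|_{L^{2}}$, this yields $|F_{1}| + |F_{2}| \lesssim \|\epsilon\|_{L^{2}} + \|\epsilon\|_{L^{2}}\|\epsilon\|_{L^{8}}^{4}$, and combined with the previous paragraph this is exactly \eqref{2.35}.

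The only point that requires genuine care is the uniform-in-$s$ invertibility of $M$, which rests on the nondegeneracy $c_{0} = \int(\tfrac{Q}{2} + yQ_{y})^{2}\,dy > 0$ from \eqref{2.18} and on the smallness $\|\epsilon\|_{L^{2}} \lesssim \delta$; thus the estimate is perturbative around the exact soliton, where $a = 0$ and $b = 0$. Everything else is Cauchy--Schwarz and the exponential localization of $Q$ and its derivatives; the one mild subtlety in the nonlinear estimate is choosing the split $|\epsilon|^{5} = |\epsilon|\cdot|\epsilon|^{4}$ so that the bound lands on precisely one power of $\|\epsilon\|_{L^{2}}$ and four powers of $\|\epsilon\|_{L^{8}}$, matching the stated inequality.
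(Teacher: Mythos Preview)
Your argument is correct and follows essentially the same route as the paper: bound the right-hand sides of \eqref{2.32}--\eqref{2.33} by $\|\epsilon\|_{L^{2}} + \|\epsilon\|_{L^{2}}\|\epsilon\|_{L^{8}}^{4}$, observe that the coefficient matrix is a $c_{0}I + O(\|\epsilon\|_{L^{2}})$ perturbation with $c_{0}>0$, and invert. One small inaccuracy: in \eqref{2.32} the weight in the linear term is $L(yQ_{yy}+Q_{y})$, not $L(yQ_{y})$, so the linear and nonlinear weights are not literally the same $g_{1}$; this does not affect your estimate since both are smooth and exponentially decaying.
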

\begin{proof}
First observe that by H{\"o}lder's inequality and the boundedness of $Q$,
\begin{equation}\label{2.36}
\int L(y Q_{yy} + Q_{y}) \cdot \epsilon dy - \int  R(\epsilon) (y Q_{y}) dy \lesssim \| \epsilon \|_{L^{2}} + \| \epsilon \|_{L^{2}} \| \epsilon \|_{L^{8}}^{4},
\end{equation}
and
\begin{equation}\label{2.37}
\int L(\frac{Q}{2} + \frac{5y}{2} Q_{y} + y^{2} Q_{yy}) \cdot \epsilon dy - \int (\frac{Q}{2} + \frac{5y}{2} Q_{y} + y^{2} Q_{yy}) R(\epsilon) dy \lesssim \| \epsilon \|_{L^{2}} + \| \epsilon \|_{L^{2}} \| \epsilon \|_{L^{8}}^{4}.
\end{equation}
Since $\int (\frac{Q}{2} + y Q_{y})^{2} dy > 0$,
\begin{equation}\label{2.38}
\aligned
|\frac{\lambda_{s}}{\lambda}| (1 + O(\| \epsilon \|_{L^{2}})) + |\frac{x_{s}}{\lambda} - 1| O(\| \epsilon \|_{L^{2}}) \lesssim \| \epsilon \|_{L^{2}} + \| \epsilon \|_{L^{2}} \| \epsilon \|_{L^{8}}^{4}, \\
|\frac{\lambda_{s}}{\lambda}| O(\| \epsilon \|_{L^{2}}) + |\frac{x_{s}}{\lambda} - 1| (1 + O(\| \epsilon \|_{L^{2}})) \lesssim \| \epsilon \|_{L^{2}} + \| \epsilon \|_{L^{2}} \| \epsilon \|_{L^{8}}^{4},
\endaligned
\end{equation}
so after doing some algebra,
\begin{equation}\label{2.39}
|\frac{\lambda_{s}}{\lambda}| + |\frac{x_{s}}{\lambda} - 1| \lesssim \| \epsilon \|_{L^{2}} + \| \epsilon \|_{L^{2}} \| \epsilon \|_{L^{8}}^{4}.
\end{equation}
\end{proof}

Next, by Strichartz estimates, rescaling, and perturbation theory, for any $k \in \mathbb{Z}$,
\begin{equation}\label{2.40}
\| u(s, y) \|_{L_{s,x}^{8}([k, k + 1] \times \mathbb{R})} \lesssim \| u_{0} \|_{L^{2}} < \| Q \|_{L^{2}}.
\end{equation}
Therefore, by the triangle inequality,
\begin{equation}\label{2.42}
\| \epsilon \|_{L_{s,x}^{8}([k, k + 1] \times \mathbb{R})} \lesssim \| Q \|_{L^{8}} + \| u \|_{L_{s,x}^{8}} \lesssim 1.
\end{equation}

Also, by perturbative arguments, for $\| \epsilon_{0} \|_{L^{2}}$ sufficiently small, if $\lambda(k) = 1$ and $x(k) = 0$,
\begin{equation}\label{2.41}
\| u(t,x) - Q(x - t) \|_{L_{t}^{\infty} L_{x}^{2}([k, k + 1] \times \mathbf{R})} \lesssim \| \epsilon_{0} \|_{L^{2}}.
\end{equation}
Thus using scaling and translation symmetries, along with Strichartz estimates,
\begin{equation}\label{2.41.1}
\| \epsilon \|_{L_{s}^{\infty} L_{y}^{2}([k, k + 1] \times \mathbb{R})} + \| \epsilon \|_{L_{s,y}^{8}([k, k + 1] \times \mathbb{R})} \lesssim \| \epsilon(k) \|_{L^{2}}.
\end{equation}

Combining $(\ref{2.40})$, $(\ref{2.41.1})$, Lemma $\ref{l2.2}$, and the fact that $\| Q \|_{L^{8}}$ is uniformly bounded, along with choosing $\| \epsilon_{0} \|_{L^{2}}$ to be the infimum of $\| \epsilon \|_{L^{2}}$ on the interval $[k, k + 1]$,
\begin{equation}\label{2.43}
\int_{k}^{k + 1} |\frac{\lambda_{s}}{\lambda}|^{2} + |\frac{x_{s}}{\lambda} - 1|^{2} ds \lesssim \int_{k}^{k + 1} \| \epsilon \|_{L^{2}}^{2} ds + \| \epsilon \|_{L_{t}^{\infty} L_{x}^{2}([k, k + 1] \times \mathbb{R})}^{2} \int_{k}^{k + 1} \| \epsilon \|_{L^{8}}^{8} ds \lesssim \int_{k}^{k + 1} \| \epsilon \|_{L^{2}}^{2} ds.
\end{equation}

\section{Exponential decay estimates of $u$}

Having obtained a decomposition of $u$ close to the soliton, the next step is to prove exponential decay of a solution that stays close to $Q$ in the case when $N(t) \geq 1$ and $\int_{I} N(t)^{2} dt = \infty$. The proof follows a similar argument in \cite{merle2001existence} and utilizes the fact that $u$ is close to a soliton, and the soliton moves to the right while a dispersive solution moves to the left.\medskip

Recall that 
\begin{equation}\label{re1.8}
\sup_{t \in I} \|  \epsilon(t) \|_{L^{2}(\mathbb{R})} = 
\sup_{t \in I} \| u(t,x) - \frac{1}{\lambda(t)^{1/2}} Q(\frac{x - x(t)}{\lambda(t)}) \|_{L^{2}(\mathbb{R})} \lesssim \delta
\end{equation}

Observe that $N(t) \geq 1$ implies $\lambda(t) \lesssim 1$, where $\lambda(t) $ is given by Lemma \ref{l2.2}. It is convenient to rescale so that $\lambda(t) \leq 1$ for all $t \in I$. Note that after rescaling $N(t) \geq 1$. See Remark $\ref{r2.2}$.
\begin{lemma}[Exponential decay to the left of the soliton]\label{l4.1}
There exists some $a_{0}$ such that for $x_{0} \geq 10 a_{0}$, if $u$ satisfies Theorem $\ref{t3.3}$, $(\ref{1.4})$ and $ \| u_{0} \|_{L^{2}} < \| Q \|_{L^{2}}$, then 
\begin{equation}\label{4.1}
\| u(t, x + x(t)) \|_{L^{2}(x \leq -x_{0})}^{2} \leq 10 c_{1} e^{-\frac{x_{0}}{6}}.
\end{equation}
\end{lemma}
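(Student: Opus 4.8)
## Proof proposal for Lemma 5.1 (exponential decay to the left of the soliton)

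\textbf{Overall strategy.} The plan is to run a weighted energy (monotonicity / Morawetz-type) argument using a cutoff function that is transported with the soliton. The key physical mechanism, as noted in the paragraph preceding the statement, is that the soliton travels to the right at unit (rescaled) speed while Airy-type radiation travels to the left; so a weight that decays exponentially as $x \to -\infty$ and is centered near the soliton position $x(t)$ should be almost monotone along the flow. First I would introduce, for a parameter $a \geq a_0$, the localized mass
\[
\mathcal{I}_a(t) := \int \varphi\!\left(\frac{x - x(t)}{a}\right) u(t,x)^2 \, dx,
\]
where $\varphi$ is a smooth, bounded, nondecreasing function with $\varphi(y) \to 0$ sufficiently fast (like $e^{y}$ type behavior) as $y \to -\infty$ and $\varphi(y) \to 1$ as $y \to +\infty$, chosen so that $|\varphi'|, |\varphi'''| \lesssim \varphi'$ and $\varphi' \gtrsim \varphi$ on the region $y \leq 0$. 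Then $\|u(t, \cdot + x(t))\|_{L^2(x \leq -x_0)}^2 \leq \mathcal{I}_a(t)$ up to translating $\varphi$ by $x_0/a$, so it suffices to control such a quantity.

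\textbf{Key steps.} (i) Differentiate $\mathcal{I}_a(t)$ in time using the equation $u_t = -(u_{xx} + u^5)_x$. As in the computation \eqref{3.24}, after integration by parts one gets a favorable (negative) term $-\tfrac{3}{a}\int \varphi'(\tfrac{x-x(t)}{a}) u_x^2$, an error term $\tfrac{1}{a^3}\int \varphi'''(\cdot) u^2$ coming from the dispersion, a nonlinear term $\tfrac{5}{3a}\int \varphi'(\cdot) u^6$, and a transport term $-\dot{x}(t)\,\tfrac{1}{a}\int \varphi'(\tfrac{x-x(t)}{a}) u^2$ from differentiating the center. (ii) Since $u$ is $\delta$-close to $Q$ and $N(t) \geq 1$, we have (after the rescaling making $\lambda \leq 1$) the modulation bound $|\tfrac{x_s}{\lambda} - 1| \lesssim \delta$ from Corollary \ref{c2.4}, hence $\dot{x}(t) = \lambda(t)^{-2}(\tfrac{x_s}{\lambda}) \geq \tfrac{1}{2}\lambda(t)^{-2} \gtrsim 1$; this makes the transport term $-\dot x(t)\tfrac1a\int \varphi' u^2$ strongly negative, and it is this term that dominates. (iii) The nonlinear term is handled by splitting $u = Q(\tfrac{\cdot - x(t)}{\lambda}) + (\text{error})$: on the region $x \leq -x_0 + x(t)$ with $x_0 \geq 10 a_0$, the soliton contribution $\int \varphi' Q^6$ is exponentially small in $x_0$ because $Q$ decays exponentially, while the error contribution is controlled by $\delta^2$ times $\int \varphi' u^2$ using $\|\epsilon\|_{L^2} \lesssim \delta$ and interpolation (\eqref{2.42}, \eqref{2.41.1}) — so for $\delta$ small this is absorbed into the good transport/derivative terms. (iv) Similarly $\tfrac{1}{a^3}\int\varphi''' u^2 \lesssim \tfrac{1}{a^3}\int \varphi' u^2$ is absorbed for $a$ large. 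We thus obtain a differential inequality of the form $\tfrac{d}{dt}\mathcal{I}_a(t) \leq -c\,\lambda(t)^{-2}\mathcal{I}_a(t) + C e^{-x_0/\text{(something)}}\lambda(t)^{-2} \leq 0$ once $\mathcal{I}_a$ exceeds a threshold $\sim e^{-x_0/6}$, plus we use $\int_I N(t)^2\,dt = \infty$, i.e. $\int_I \lambda(t)^{-2}\,dt = \infty$. (v) Integrating from $-\infty$ (or from $\inf I$) — using almost periodicity and $N(t)\geq 1$ to control the behavior at the left endpoint, and the divergence of $\int \lambda^{-2}$ to kill any initial contribution — forces $\mathcal{I}_a(t) \lesssim e^{-x_0/6}$ uniformly, giving \eqref{4.1}.

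\textbf{Main obstacle.} The delicate point is the bookkeeping between the various localization scales and the modulation parameters: one must choose $a_0$, the profile $\varphi$, and $\delta$ in the right order so that (a) the exponential decay rate that survives is exactly (at least) $x_0/6$, matching the constant in the statement, (b) the soliton tail $\int \varphi' Q^6$ and the cross terms involving $\epsilon$ are genuinely smaller than the good terms rather than merely comparable, and (c) the endpoint/integration-in-time argument is legitimate — in particular that $\int_I \lambda(t)^{-2}\,dt = \infty$ (which is \eqref{3.33} rewritten via $N = 1/\lambda$) lets us discard boundary terms and close the bootstrap. Getting the nonlinear term under control in $L^2$-only regularity (rather than $H^1$, as in \cite{merle2001existence}) is where the Strichartz-based bounds \eqref{2.40}–\eqref{2.41.1} enter, and verifying that these suffice on the far-left region is the part I expect to require the most care.
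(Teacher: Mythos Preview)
Your framework---a transported localized-mass functional and the monotonicity mechanism from \cite{merle2001existence}---matches the paper's, and you correctly identify the ingredients that replace the $H^1$ control used there (the modulation bounds of Corollary~\ref{c2.4}, the Strichartz-based estimates \eqref{2.40}--\eqref{2.41.1}, and the divergence $\int_I N(t)^2\,dt=\infty$). However, the argument as sketched has a genuine gap at step~(iv), and the paper's actual mechanism is different from the Gronwall picture you describe.

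From the terms you list, the transport contribution gives a good sign on $-\tfrac{\dot x(t)}{a}\int \varphi'\bigl(\tfrac{x-x(t)}{a}\bigr)u^2$, not on $-c\lambda^{-2}\mathcal I_a(t)=-c\lambda^{-2}\int\varphi\,u^2$. Since $\varphi'\ll\varphi$ to the right of the transition zone (where the bulk of the mass sits), $\int\varphi' u^2$ cannot dominate $\mathcal I_a$, and the differential inequality $\tfrac{d}{dt}\mathcal I_a\le -c\lambda^{-2}\mathcal I_a+\cdots$ simply does not follow. The paper does \emph{not} close via Gronwall. It argues by contradiction: one fixes $t_0$ and $x_0$ where \eqref{4.1} fails and introduces the functional $I(t)=\int u^2\,\psi\bigl(x-\tilde x(0)+x_0-\tfrac14(\tilde x(t)-\tilde x(0))\bigr)$, with $\psi$ the antiderivative of $\phi=cQ(\cdot/K)$, $K=3\sqrt 2$. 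The point you are missing is that the weight drifts at only one quarter of the soliton speed. This fractional speed does two things at once: (a) the soliton outruns the weight, so by almost periodicity and $\int_0^{\sup I}N^2=\infty$ one has $I(t)\nearrow\|u\|_{L^2}^2$; and (b) near the soliton the factor $\phi$ carries $e^{-\frac{3}{4K}(\tilde x(t)-\tilde x(0))-x_0/K}$, which makes the dangerous $\int_{|x-\tilde x(t)|\le a_0}\phi\,u^6$ term \emph{integrable in $t$} after the change of variables to $\tilde x$. The contradiction is then that $I(T)-I(0)$ must eventually exceed $5c_1 e^{-x_0/K}$ (from \eqref{4.8} and $I\to\|u\|_{L^2}^2$) yet is also bounded above by $CK e^{-x_0/K}$. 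With your weight moving at the full soliton speed this mechanism is unavailable: the soliton sits permanently in the transition region of $\varphi'$, the $u^6$ contribution there is $O(1)$ rather than time-integrable, and there is no target value for $\mathcal I_a$ to approach.

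A related issue: you propose to integrate from $\inf I$, but the information supplied by Theorem~\ref{t3.3} is $\int_I N^2=\infty$, which must be exploited in the forward direction (the paper first treats the symmetric case \eqref{4.15.1}, and the asymmetric case $\int_{\inf I}^0 N^2<\infty$ is handled separately in Theorem~\ref{t4.3}). Integrating backward gives no control on the boundary term.
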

\noindent \textbf{Remark:} It is important to note that $a_{0}$ does not depend on the $\delta > 0$ in $(\ref{re1.8})$.\medskip

\begin{proof}
Suppose there exists some $t_{0} \in \mathbb{R}$ and $x_{0} \geq 10 a_{0}$ such that
\begin{equation}\label{4.2}
\int_{x \leq -x_{0}} u(t_{0}, x + x(t_{0}))^{2} dx > 10 c_{1} e^{-\frac{x_{0}}{6}}.
\end{equation}

Let $K = 3 \sqrt{2}$, and let
\begin{equation}\label{4.3}
\phi(x) = c Q(\frac{x}{K}),
\end{equation}
where
\begin{equation}\label{4.4}
c = \frac{1}{K \int_{-\infty}^{\infty} Q(x) dx}.
\end{equation}
Define
\begin{equation}\label{4.5}
\psi(x) = \int_{-\infty}^{x} \phi(y) dy.
\end{equation}
Then,
\begin{equation}\label{4.6}
\lim_{x \rightarrow -\infty} \psi(x) = 0, \qquad \lim_{x \rightarrow +\infty} \psi(x) = 1.
\end{equation}

Next, define a modification of $x(s)$, $\tilde{x}(s)$, such that $x(k) = \tilde{x}(k)$ for all $k \in \mathbb{Z}$, and for any $s \in \mathbb{R}$, and for any $k < s < k + 1$, $\tilde{x}(s)$ is the linear interpolation between $\tilde{x}(k)$ and $\tilde{x}(k + 1)$. Then by $(\ref{2.43})$,
\begin{equation}\label{4.14}
\aligned
\tilde{x}(k + 1) - \tilde{x}(k) = x(k + 1) - x(k) = \int_{k}^{k + 1} x_{s}(s) ds \\ = \int_{k}^{k + 1} \lambda(s) + (\sup_{k \leq s \leq k + 1} \lambda(s)) \cdot (\int_{k}^{k + 1} \| \epsilon \|_{L^{2}} ds)
= (\int_{k}^{k + 1} \lambda(s) ds) \cdot (1 + O(\delta)).
\endaligned
\end{equation}
The last estimate follows from the fact that $\lambda(s) \sim \lambda(k)$ for any $k \leq s \leq k + 1$. It also follows from $(\ref{4.14})$ that for any $k \leq s \leq k + 1$,
\begin{equation}\label{4.15}
|\tilde{x}(s) - x(s)| \leq |\tilde{x}(s) - \tilde{x}(k)| + |x(s) - x(k)| \lesssim (\int_{k}^{k + 1} \lambda(s) ds).
\end{equation}

For technical reasons, it is useful to consider two cases separately. First, suppose that
\begin{equation}\label{4.15.1}
\int_{0}^{\sup(I)} N(t)^{2} dt = \int_{\inf(I)}^{0} N(t)^{2} dt = +\infty.
\end{equation}
In this case, suppose without loss of generality that $t_{0} = 0$, where $t_{0}$ is given by $(\ref{4.2})$. Then,
\begin{equation}\label{4.2.1}
\int_{x \leq -x_{0}} u(0, x + x(0))^{2} dx > 10 c_{1} e^{-\frac{x_{0}}{6}}.
\end{equation}

Define the function
\begin{equation}\label{4.7}
I(t) = \int u(t,x)^{2} \psi(x - \tilde{x}(0) + x_{0} - \frac{1}{4} (\tilde{x}(t) - \tilde{x}(0))) dx.
\end{equation}
Then by $(\ref{4.2})$, since $\tilde{x}(0) = x(0)$,
\begin{equation}\label{4.8}
I(0) \leq \int u(0, x)^{2} dx - \frac{1}{2} \int_{x \leq -x_{0} + \tilde{x}(0)} u(0, x)^{2} dx \leq \int u(0,x)^{2} dx - 5 c_{1} e^{-\frac{x_{0}}{K}}.
\end{equation}
Integrating by parts,
\begin{equation}\label{4.17}
\aligned
I'(t) = -3 \int u_{x}(t,x)^{2} \phi(x - \tilde{x}(0) + x_{0} - \frac{1}{4} (\tilde{x}(t) - \tilde{x}(0))) dx \\
+ \int u(t,x)^{2} \phi''(x - \tilde{x}(0) + x_{0} - \frac{1}{4}(\tilde{x}(t) - \tilde{x}(0))) dx \\
+ \frac{5}{3} \int u(t,x)^{6} \phi(x - \tilde{x}(0) + x_{0} - \frac{1}{4}(\tilde{x}(t) - \tilde{x}(0))) dx \\
- \frac{\dot{\tilde{x}}(t)}{4} \int u(t,x)^{2} \phi(x - \tilde{x}(0) + x_{0} - \frac{1}{4}(\tilde{x}(t) - \tilde{x}(0))) dx.
\endaligned
\end{equation}
Following \cite{merle2001existence}, observe that
\begin{equation}\label{4.10}
\frac{\dot{\tilde{x}}(t)}{4} = \frac{\tilde{x}_{s}}{4 \lambda^{3}} = \frac{1}{4} \frac{1}{\lambda^{2}} \frac{\tilde{x}_{s}}{\lambda} = \frac{1}{4 \lambda^{2}} (1 + O(\delta)).
\end{equation}
Also observe that
\begin{equation}\label{4.11}
\phi''(x) = \frac{c}{K^{2}} Q_{xx}(\frac{x}{K}) \leq \frac{c}{K^{2}} Q(\frac{x}{K}) = \frac{1}{K^{2}} \phi(x) = \frac{1}{18} \phi(x).
\end{equation}
Since $\lambda(t) \leq 1$,
\begin{equation}\label{4.12}
-\frac{\dot{\tilde{x}}(t)}{4 \lambda^{2}} \phi(x) + \frac{1}{K^{2}} \phi(x) \leq -\frac{1}{18}.
\end{equation}
Therefore,
\begin{equation}\label{4.18}
\aligned
I'(t) \leq -3 \int u_{x}(t,x)^{2} \phi(x - \tilde{x}(0) + x_{0} - \frac{1}{4} (\tilde{x}(t) - \tilde{x}(0))) dx \\
+ \frac{5}{3} \int u(t,x)^{6} \phi(x - \tilde{x}(0) + x_{0} - \frac{1}{4}(\tilde{x}(t) - \tilde{x}(0))) dx \\
- \frac{1}{18} \int u(t,x)^{2} \phi(x - \tilde{x}(0) + x_{0} - \frac{1}{4}(\tilde{x}(t) - \tilde{x}(0))) dx.
\endaligned
\end{equation}

Next, using Lemma $6$ from \cite{merle2001existence} and H{\"o}lder's inequality,
\begin{equation}\label{4.19}
\aligned
\int_{|x - \tilde{x}(t)| > a_{0}} u(t,x)^{6} \phi(x - \tilde{x}(0) + x_{0} - \frac{1}{4}(\tilde{x}(t) - \tilde{x}(0))) dx \leq \| u^{2} \phi^{1/2} \|_{L^{\infty}(|x - \tilde{x}(t)| > a_{0}}^{2} (\int_{|x - \tilde{x}(t)| > a_{0}} u(t,x)^{2} dx) \\
\lesssim (\int_{|x - \tilde{x}(t)| > a_{0}} u(t,x)^{2} dx)^{2} (\int u_{x}(t,x)^{2} \phi(x - \tilde{x}(0) + x_{0} - \frac{1}{4} (\tilde{x}(t) - \tilde{x}(0))) dx \\ + \int u(t,x)^{2} \phi(x - \tilde{x}(0) + x_{0} - \frac{1}{4} (\tilde{x}(t) - \tilde{x}(0))) dx).
\endaligned
\end{equation}
Since $\lambda(t) \leq 1$ and $|x - \tilde{x}(t)| \lesssim 1$,
\begin{equation}\label{4.19.1}
\int_{|x - \tilde{x}(t)| > a_{0}} \lambda(t)^{-1} Q(\frac{x - x(t)}{\lambda(t)})^{2} dx \lesssim e^{-2 a_{0}},
\end{equation}
and by $(\ref{re1.8})$,
\begin{equation}\label{4.19.2}
\int_{|x - \tilde{x}(t)| > a_{0}} \lambda(t)^{-1} \epsilon(t, \frac{x - x(t)}{\lambda(t)})^{2} dx \leq \delta^{2}.
\end{equation}
Therefore, for $a_{0}$ sufficiently large, plugging $(\ref{4.19.2})$ into $(\ref{4.18})$,
\begin{equation}\label{4.20}
I'(t) \leq \int_{|x - \tilde{x}(t)| \leq a_{0}} u(t,x)^{6} \phi(x - \tilde{x}(0) + x_{0} - \frac{1}{4}(\tilde{x}(t) - \tilde{x}(0))) dx.
\end{equation}





By direct computation,
\begin{equation}\label{4.21}
\phi(x) \leq c e^{-\frac{1}{K} |x - \tilde{x}(0) + x_{0} - \frac{1}{4}(\tilde{x}(t) - \tilde{x}(0))|} = c e^{-\frac{1}{K} |x - \tilde{x}(t) + \frac{3}{4}(\tilde{x}(t) - \tilde{x}(0)) + x_{0}|}.
\end{equation}
Since $\tilde{x}(t) \geq \tilde{x}(0)$ and $|x - \tilde{x}(t)| \leq a_{0}$,
\begin{equation}\label{4.22}
= c e^{-\frac{1}{K}(x - \tilde{x}(t) + \frac{3}{4} (\tilde{x}(t) - \tilde{x}(0)) + x_{0})}.
\end{equation}
Therefore, since from $(\ref{4.10})$, $\dot{\tilde{x}}(t) \geq \frac{1}{2 \lambda^{2}}$, so
\begin{equation}\label{4.23}
I'(t) \leq C e^{\frac{-x_{0}}{K}} e^{-\frac{3}{4K}(\tilde{x}(t) - \tilde{x}(0))} \dot{\tilde{x}}(t) \int \lambda(t)^{2} u(t,x)^{6} dx.
\end{equation}
Making a change of variables, for any $T > 0$,
\begin{equation}\label{4.24}
\int_{0}^{T} I'(t) dt \leq \sum_{k \geq 0} C e^{\frac{-x_{0}}{K}} \int_{k}^{k + 1} x_{s}(s) e^{-\frac{3}{4K}(\tilde{x}(s) - \tilde{x}(0))} \int \lambda(s)^{2} u(t(s), x)^{6} dx ds.
\end{equation}
Then by $(\ref{2.40})$, conservation of mass, and a change of variables,
\begin{equation}\label{4.25}
(\ref{4.24}) \lesssim C K e^{\frac{-x_{0}}{K}}.
\end{equation}
However, by the fundamental theorem of calculus, $(\ref{4.8})$, the fact that by concentration compactness,
\begin{equation}\label{4.26.1}
I(t) \nearrow \int u(0,x)^{2} dx, \qquad \text{as} \qquad t \nearrow \sup(I),
\end{equation}
and $K = 3 \sqrt{2} > 6$ gives a contradiction for $a_{0}$ sufficiently large.\medskip
 
 Proving $(\ref{4.26.1})$ is the only place where $(\ref{4.15.1})$ is used. (Since $[0, t_{0}]$ is a compact set for any $t_{0} \in I$, and $N(t)$ is a continuous function, $(\ref{4.15.1})$ would also hold when $0$ is replaced by any $t_{0} \in I$.) Then by $(\ref{4.10})$, for any $T > 0$, $T \in I$,
 \begin{equation}\label{4.26.2}
 \tilde{x}(T) - \tilde{x}(0) = \int_{0}^{T} \dot{\tilde{x}}(t) dt \geq \int_{0}^{T} \frac{1}{2 \lambda^{2}} dt \sim \int_{0}^{T} N(t)^{2} dt \rightarrow +\infty,
 \end{equation}
 as $T \nearrow \sup(I)$. This proves $(\ref{4.26.1})$.
 \end{proof}

Now prove exponential decay to the right.
\begin{lemma}[Exponential decay to the right of the soliton]\label{l4.2}
For $x_{0} \geq 10 a_{0}$,
\begin{equation}\label{4.27}
\| u(t, x + x(t)) \|_{L^{2}(x \geq x_{0})}^{2} \leq 10 c_{1} e^{-\frac{x_{0}}{6}}.
\end{equation}
\end{lemma}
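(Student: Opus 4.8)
The plan is to deduce the estimate \eqref{4.27} from Lemma \ref{l4.1} by exploiting the invariance of the equation under the space-time reflection $(t,x)\mapsto(-t,-x)$. Writing \eqref{1.1} as $u_t+u_{xxx}+(u^5)_x=0$, a direct computation gives, for $v(t,x):=u(-t,-x)$,
$$
v_t+v_{xxx}+(v^5)_x=-[u_t+u_{xxx}+(u^5)_x](-t,-x)=0,
$$
so $v$ solves \eqref{1.1}; since the $L^5_xL^{10}_t$ and $L^\infty_tL^2_x$ norms are unaffected by the reflection, $v$ is again a maximal-lifespan strong solution, now on $-I:=\{-t:t\in I\}$. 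Because $Q$ is even, for every $(\lambda_0,x_0)$ one has
$$
\| v(t,\cdot)-\lambda_0^{-1/2}Q(\lambda_0^{-1}(\cdot+x_0))\|_{L^2}=\| u(-t,\cdot)-\lambda_0^{-1/2}Q(\lambda_0^{-1}(\cdot-x_0))\|_{L^2},
$$
so $v$ is almost periodic modulo symmetries with parameters $\lambda_v(t)=\lambda(-t)$, $x_v(t)=-x(-t)$, and thus $N_v(t)=N(-t)$; in particular $v$ is $\delta$-close to $Q$, $N_v(t)\ge 1$ on $-I$, $\| v(0)\|_{L^2}=\| u(0)\|_{L^2}<\| Q\|_{L^2}$, and $\int_{-I}N_v(t)^2\dd t=\int_I N(s)^2\dd s=\infty$ by Theorem \ref{t3.3}.

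Hence $v$ satisfies all the hypotheses of Lemma \ref{l4.1}. Moreover the constants $a_0$ and $c_1$ appearing there depend only on $Q$ and on quantities — such as $M(u)$ and $\sup_t\| u(t,\cdot+x(t))\|_{L^2}$ — that are invariant under $(t,x)\mapsto(-t,-x)$ (and recall $a_0$ does not depend on $\delta$), so they may be used verbatim for $v$. Applying Lemma \ref{l4.1} to $v$ then gives, for every $x_0\ge 10a_0$ and every $t$ in the lifespan of $v$,
$$
\| v(t,x+x_v(t))\|_{L^2(x\le -x_0)}^2\le 10c_1 e^{-x_0/6}.
$$

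Finally I would unwind the change of variables: since $v(t,x+x_v(t))=v(t,x-x(-t))=u(-t,-x+x(-t))$, the substitution $x\mapsto -x$ in the defining integral identifies the left tail of $v$ at time $t$ with the right tail of $u$ at time $-t$,
$$
\| v(t,x+x_v(t))\|_{L^2(x\le -x_0)}^2=\| u(-t,x+x(-t))\|_{L^2(x\ge x_0)}^2,
$$
and relabeling $s=-t$ (which ranges over $I$ as $t$ ranges over $-I$) yields \eqref{4.27}. I expect the only real work to be the bookkeeping above — verifying that $\delta$-closeness, almost periodicity, the bound $N(t)\ge 1$, the divergence $\int N(t)^2\dd t=\infty$, and the universal constants all transfer to $v$ — since there is no new analytic input: the directional asymmetry of the monotonicity in Lemma \ref{l4.1} (soliton moving right, radiation drifting left) is exactly undone by the reflection. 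Alternatively one could rerun the monotonicity argument of Lemma \ref{l4.1} directly, with the cutoff placed to the right of the soliton and translated rightward faster than $x(t)$, but the reflection reduction is cleaner.
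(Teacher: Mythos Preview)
Your approach is correct and is essentially the paper's own: the paper also introduces $v(t,x)=u(-t,-x)$ and reruns the monotonicity computation of Lemma~\ref{l4.1} for $v$ explicitly, whereas you invoke Lemma~\ref{l4.1} for $v$ as a black box and then undo the change of variables---the content is identical. The only small point to make precise is that the key step \eqref{4.26.1} in the proof of Lemma~\ref{l4.1} uses divergence of the \emph{forward} half-line integral (see \eqref{4.26.2}), so applying it to $v$ requires $\int_{\inf(I)}^{0}N(t)^{2}\,dt=\infty$, i.e.\ the standing case assumption \eqref{4.15.1} rather than merely the conclusion of Theorem~\ref{t3.3}; the paper's proof of Lemma~\ref{l4.2} tacitly works under the same assumption, with the asymmetric case handled afterward in Theorem~\ref{t4.3}.
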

\begin{proof}
In this case, observe that if $u(t,x)$ solves $(\ref{1.1})$, then so does $v(t,x) = u(-t, -x)$. Once again assume without loss of generality that $(\ref{4.27})$ fails at $t_{0} = 0$. Define the function
\begin{equation}\label{4.28}
I(t) = \int v(t,x)^{2} \psi(x + \tilde{x}(0) + x_{0} + \frac{1}{4}(\tilde{x}(-t) - \tilde{x}(0))) dx.
\end{equation}
If $(\ref{4.27})$ fails at $t_{0} = 0$ for some $x_{0}$, then
\begin{equation}\label{4.28.1}
I(0) \leq \int u(t,x)^{2} dx - 5 c_{1} e^{-\frac{x_{0}}{6}}.
\end{equation}
Again by direct calculation,
\begin{equation}\label{4.29}
\aligned
I'(t) = -3 \int v_{x}(t,x)^{2} \phi(x + \tilde{x}(0) + x_{0} + \frac{1}{4}(\tilde{x}(-t) - \tilde{x}(0))) dx \\
+ \int v(t,x)^{2} \phi''(x + \tilde{x}(0) + x_{0} + \frac{1}{4}(\tilde{x}(-t) - \tilde{x}(0))) dx \\
+ \frac{5}{3} \int v(t,x)^{6} \phi(x + \tilde{x}(0) + x_{0} + \frac{1}{4}(\tilde{x}(-t) - \tilde{x}(0))) dx \\
- \frac{\dot{\tilde{x}}(-t)}{4} \int v(t,x)^{2} \phi(x + \tilde{x}(0) + x_{0} + \frac{1}{4}(\tilde{x}(-t) - \tilde{x}(0))) dx.
\endaligned
\end{equation}
Making the same argument as in Lemma $\ref{l4.1}$ and making a change of variables
\begin{equation}\label{4.30}
\aligned
I'(t) \leq \int_{|x - \tilde{x}(t)| \leq a_{0}} u(-t, -x)^{6} \phi(x + \tilde{x}(0) + x_{0} + \frac{1}{4}(\tilde{x}(-t) - \tilde{x}(0)) dx \\ = \int_{|x - \tilde{x}(t)| \leq a_{0}} u(-t, x)^{6} \phi(-x + \tilde{x}(0) + x_{0} + \frac{1}{4}(\tilde{x}(-t) - \tilde{x}(0)) dx.
\endaligned
\end{equation}
Then
\begin{equation}\label{4.31}
\phi(x) \leq C e^{-\frac{1}{K} (-x + \tilde{x}(-t) + \frac{3}{4} (\tilde{x}(0) - \tilde{x}(-t)) + x_{0})} \leq C e^{-\frac{3}{4K} (\tilde{x}(0) - \tilde{x}(-t)) - \frac{x_{0}}{K}}
\end{equation}
Therefore, as in Lemma $\ref{l4.1}$, we can show that
\begin{equation}\label{4.32}
\int_{0}^{T} I'(t) dt \lesssim CK e^{-\frac{x_{0}}{K}}.
\end{equation}
This proves $(\ref{4.27})$.
\end{proof}

\noindent \textbf{Remark:} Once again $K = 3 \sqrt{2}$.\medskip

It only remains to prove
\begin{theorem}\label{t4.3}
There does not exist an almost periodic solution to $(\ref{1.1})$ that satisfies $N(t) \geq 1$ for all $t \in I$,
\begin{equation}\label{4.33}
\int_{0}^{\sup(I)} N(t)^{2} dt = \infty,
\end{equation}
and
\begin{equation}\label{4.34}
\int_{\inf(I)}^{0} N(t)^{2} dt < \infty.
\end{equation}
\end{theorem}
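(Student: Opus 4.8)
The plan is to show that \eqref{4.34}, combined with $N(t)\ge 1$, forces a finite-time blow-up of $u$ backward in time, and then to run a Morawetz/virial argument in the spirit of Merle and Martel--Merle, together with a rigidity statement, to contradict $\|u_0\|_{L^2}<\|Q\|_{L^2}$. First, since $N(t)\ge 1$ and $\int_{\inf(I)}^0 N(t)^2\,dt<\infty$, the interval $[\inf(I),0]$ has finite length, so $\inf(I)>-\infty$, and --- since the solution cannot be continued past $\inf(I)$ --- we must have $N(t)\to\infty$, i.e.\ $\lambda(t)\to 0$, as $t\to\inf(I)^+$. By Corollary~\ref{c2.4} together with \eqref{2.42}, $\dot x(t)=\lambda(t)^{-2}(1+O(\delta))\sim N(t)^2$, hence $0\le x(0)-x(t)=\int_t^0\dot x(t')\,dt'\lesssim\int_{\inf(I)}^0 N(t')^2\,dt'<\infty$ uniformly in $t$, so $x(t)$ converges to a finite point $x_-$ as $t\to\inf(I)^+$. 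Combining $\lambda(t)\to 0$, $x(t)\to x_-$, and the uniform tightness from almost periodicity (Section~\ref{sec:almost:per}), the solution concentrates all of its mass at $x_-$: for every $\eta>0$, $\int_{|x-x_-|>\eta}u(t,x)^2\,dx\to 0$ as $t\to\inf(I)^+$.

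Next I would establish exponential decay on both sides of the soliton. Decay to the left, Lemma~\ref{l4.1}, is still available, since its proof uses only $\int_{t_0}^{\sup(I)}N(t)^2\,dt=\infty$ for each $t_0\in I$, which holds under \eqref{4.33} because the complementary interval has finite length and $N\ge 1$. Decay to the right, Lemma~\ref{l4.2}, is \emph{not} directly available: its proof (via $u(-t,-x)$) requires $x(t)\to-\infty$ as $t\to\inf(I)$, whereas we have just seen $x(t)\to x_-$ finite --- precisely the feature that distinguishes \eqref{4.34} from the two-sided case and forces this scenario to be treated on its own. To recover decay on the right, I would combine the concentration above with a one-sided monotonicity argument run forward in time: for $x_0\ge 10a_0$ the localized mass to the right of $x(t)+x_0$, measured by a functional of the type in Lemma~\ref{l4.1} but centered to the right of the soliton, can only decrease forward in time up to errors exponentially small in $x_0$ (the sign being favorable because KdV radiation travels leftward while the soliton is the only rightward-moving structure); since by concentration this quantity tends to $0$ as $t\to\inf(I)^+$, it stays $\le 10c_1 e^{-x_0/6}$ for all $t\in I$, which is the analogue of \eqref{4.27}.

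With exponential decay on both sides in hand, so that $u(t)$ is a rescaled soliton plus exponentially localized mass, I would run the virial argument of Merle and Martel--Merle in the rescaled variables $(s,y)$ of \eqref{2.6} and \eqref{2.28}. One introduces a weighted functional $\mathcal J(s)=\int y\,\epsilon(s,y)^2\,\varphi(y)\,dy$ with $\varphi$ a fixed smooth nonnegative weight for which $y\varphi(y)$ is bounded, so that $\mathcal J$ is uniformly bounded since $\|\epsilon(s)\|_{L^2}\lesssim\delta$; a computation using the equation \eqref{2.30} for $\epsilon$, the modulation identities \eqref{2.32}--\eqref{2.33}, the orthogonality conditions \eqref{2.7}, and the coercivity of $L$ under \eqref{2.7} then yields $\mathcal J'(s)\le -c\,\|\epsilon(s)\|_{H^1_{loc}}^2+(\text{errors})$, the errors being absorbed using the exponential decay and the smallness of $\delta$. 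Integrating and using boundedness of $\mathcal J$ gives $\int\|\epsilon(s)\|_{H^1_{loc}}^2\,ds<\infty$ over the forward $s$-interval, which has infinite length because $\int_0^{\sup(I)}N(t)^2\,dt=\int\lambda(s)\,ds=\infty$ while $\lambda\le 1$. Hence $\|\epsilon(s_n)\|_{L^2}\to 0$ along some $s_n\to+\infty$, so at the corresponding times $u$ is arbitrarily close modulo symmetries to the soliton; by the Martel--Merle rigidity (Liouville) theorem this forces $u$ to be a soliton, which has mass $\|Q\|_{L^2}^2$, contradicting $\|u_0\|_{L^2}<\|Q\|_{L^2}$.

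The main obstacle is twofold. First, recovering the right-side localization without the time-reversal symmetry that powers Lemma~\ref{l4.2}: this is exactly where the hypothesis \eqref{4.34}, rather than two-sided divergence of $\int_I N^2$, genuinely enters. Second, the coercivity bookkeeping in the virial step, where the leading-order term vanishes because $E(Q)=0$, so that a definite-sign quadratic form in $\epsilon$ must be extracted from the orthogonality conditions \eqref{2.7} and the coercivity of $L$, the nonlinear remainder $R(\epsilon)$ must be controlled, and one must work a priori only in $L^2$ --- the latter handled by upgrading $u$ to $H^1_{loc}$ for a.e.\ time via the local smoothing estimate, which is what legitimizes the appearance of $\int u_x^2$ in the identity.
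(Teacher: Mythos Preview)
Your opening paragraph (finite backward interval, $x(t)\to x_-$, concentration) is correct and matches the paper. After that, the paper takes a much shorter route that avoids both right-side exponential decay and the forward virial/rigidity machinery entirely. Once concentration at $x_-$ is known, the paper introduces the one-sided functional $M(t)=\int\chi(x/x_0)\,u(t,x+x(0))^2\,dx$ with $\chi$ a smooth cutoff supported in $[1,\infty)$, notes $M(t)\to 0$ as $t\to\inf(I)^+$, and differentiates: $\frac{d}{dt}M\le\frac{5}{3x_0}\int\chi' u^6+\frac{1}{x_0^3}\int\chi''' u^2$. Integrating over $(\inf(I),0)$ and using $\int_{\inf(I)}^0\int u^6\lesssim\int_{\inf(I)}^0 N^2<\infty$ gives $\int\chi(x/x_0)u(0)^2\lesssim x_0^{-1}$; one bootstrap improves this to $x_0^{-4/3}$, whence $\int_{x\ge 0}x\,u(0,x+x(0))^2\,dx<\infty$. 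Combined with the left exponential decay of Lemma~\ref{l4.1} this yields $\int|x|\,u(0,x+x(0))^2\,dx<\infty$, and then the Morawetz identity of Proposition~\ref{t3.2} gives $\int_{\inf(I)}^0\int u_x^2<\infty$, hence $E(u)<\infty$. Since $\|u_0\|_{L^2}<\|Q\|_{L^2}$, the sharp Gagliardo--Nirenberg inequality and energy conservation bound $\|u(t)\|_{\dot H^1}$ uniformly, contradicting finite-time blow-up at $\inf(I)$.

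Your proposed route has a real gap in the right-decay step. The monotonicity you describe yields $\tilde I'(t)\le Ce^{-x_0/K}\int_{|x-x(t)|\le a_0}u^6$, so integrating from $\inf(I)$ gives $\tilde I(t)\le Ce^{-x_0/K}\int_{\inf(I)}^t N(\tau)^2\,d\tau$; this is uniformly small for $t\le 0$ by \eqref{4.34}, but the bound diverges as $t\to\sup(I)$ by \eqref{4.33}, so you do \emph{not} obtain uniform right decay forward in time --- precisely where your virial in $s$ needs it. Without it, the tail errors in the localized virial (living where $\varphi'$ is supported) are uncontrolled, and the bound $\int\|\epsilon\|_{H^1_{\mathrm{loc}}}^2\,ds<\infty$ is not justified. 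Two smaller points: the passage from $\|\epsilon(s_n)\|_{H^1_{\mathrm{loc}}}\to 0$ to $\|\epsilon(s_n)\|_{L^2}\to 0$ needs an extra ingredient (precompactness from almost periodicity would do), and the Liouville theorem is unnecessary --- $\|\epsilon(s_n)\|_{L^2}\to 0$ together with mass conservation already forces $\|u_0\|_{L^2}=\|Q\|_{L^2}$.
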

\begin{proof}
By $(\ref{4.26.2})$ and $(\ref{4.33})$, exponential decay to the left must hold for such a solution. That is,
\begin{equation}\label{4.35}
\| u(t, x + x(t)) \|_{L^{2}(x \leq -x_{0})} \leq 10 c_{1} e^{-\frac{x_{0}}{K(u)}}.
\end{equation}
Now let $\chi$ be a smooth function such that $\chi(x) = 0$ for $x \leq 1$ and $\chi(x) = 1$ when $x > 2$. Then define the functional
\begin{equation}\label{4.36}
M(t) = \int \chi(\frac{x}{x_{0}}) u(t,x + x(0))^{2} dx.
\end{equation}
The fact that $N(t) \geq 1$ combined with $(\ref{4.34})$ implies $\inf(I) > -\infty$. This fact implies that $N(t) \nearrow \infty$ as $t \searrow \inf(I)$, so $(\ref{4.14})$ combined with almost periodicity imply that
\begin{equation}\label{4.37}
\lim_{t \searrow \inf(I)} M(t) = 0.
\end{equation}
Then integrating by parts,
\begin{equation}\label{4.38}
\aligned
\frac{d}{dt} M(t) = -\frac{3}{x_{0}} \int \chi'(\frac{x}{x_{0}}) u_{x}(t, x + x(0))^{2} + \frac{5}{3 x_{0}} \int \chi'(\frac{x}{x_{0}}) u(t, x + x(0))^{6} dx + \frac{1}{x_{0}^{3}} \int \chi'''(\frac{x}{x_{0}}) u(t, x + x(0))^{2} dx \\
\leq  \frac{5}{3 x_{0}} \int \chi'(\frac{x}{x_{0}}) u(t, x + x(0))^{6} dx + \frac{1}{x_{0}^{3}} \int \chi'''(\frac{x}{x_{0}}) u(t, x + x(0))^{2} dx.
\endaligned
\end{equation}
Then by $(\ref{2.40})$,
\begin{equation}\label{4.39}
\int_{\inf(I)}^{0} \frac{d}{dt} M(t) dt \lesssim \frac{5}{3 x_{0}} \int_{\inf(I)}^{0} N(t)^{2} dt - \frac{1}{x_{0}^{3}} \inf(I) \lesssim \frac{1}{x_{0}} (\int_{\inf(I)}^{0} N(t)^{2} dt).
\end{equation}
This implies that for any $t \in (\inf(I), 0]$,
\begin{equation}\label{4.40}
\int \chi(\frac{x}{x_{0}}) u(t, x + x(0))^{2} dx \lesssim \frac{1}{x_{0}} (\int_{\inf(I)}^{0} N(t)^{2} dt).
\end{equation}
Since $|\chi'(\frac{x}{x_{0}})| \leq \chi(\frac{2x}{x_{0}})$, plugging $(\ref{4.40})$ back in to $(\ref{4.39})$,
\begin{equation}\label{4.41}
\aligned
\int_{\inf(I)}^{0} \frac{d}{dt} M(t) dt \lesssim \| \chi(\frac{2 x}{x_{0}}) u \|_{L^{2}}^{2/3} \| u(t) \|_{L^{8}}^{16/3} dt \\ \lesssim \frac{1}{x_{0}^{1/3}}(\int_{\inf(I)}^{0} N(t)^{2} dt)^{1/3} \cdot \frac{1}{x_{0}} (\int_{\inf(I)}^{0} N(t)^{2} dt) = \frac{1}{x_{0}^{4/3}} (\int_{\inf(I)}^{0} N(t)^{2} dt)^{4/3}.
\endaligned
\end{equation}
Therefore, since $\int_{\inf(I)}^{0} N(t)^{2} dt = R < \infty$,
\begin{equation}\label{4.42}
\int_{x \geq 0} u(t, x + x(0))^{2} x dx < \infty,
\end{equation}
which combined with $(\ref{4.36})$ implies
\begin{equation}\label{4.43}
\int |x| u(t, x + x(0))^{2} dx < \infty.
\end{equation}
Then following the proof of Proposition $\ref{t3.2}$,
\begin{equation}\label{4.44}
\int_{\inf(I)}^{0} \int u_{x}(t,x)^{2} dx dt < \infty.
\end{equation}
By the Sobolev embedding theorem, $E(u) < \infty$. Then by conservation of energy and the Gagliardo-Nirenberg inequality, the solution to $(\ref{1.1})$ cannot blow up in finite time, which gives a contradiction.
\end{proof}

The proof that there does not exist a solution satisfying
\begin{equation}\label{4.45}
\int_{0}^{\sup(I)} N(t)^{2} dt < \infty, \qquad \int_{\inf(I)}^{0} N(t)^{2} dt = \infty,
\end{equation}
is identical.

\section{Virial identities}

Next, use the virial identity from \cite{martel2001instability} to show that, on average, the inner product $(\epsilon, Q)$ is bounded by $\| \epsilon \|_{L^{2}}^{2}$.
\begin{theorem}\label{t6.1}
For any $T > 0$,
\begin{equation}\label{6.1}
|\int_{0}^{T} \lambda(s)^{1/2} \int \epsilon(s,x) Q(x) dx ds| \lesssim C(u) + \int_{0}^{T} \lambda(s)^{1/2} \| \epsilon(s) \|_{L^{2}}^{2} ds.
\end{equation}
\end{theorem}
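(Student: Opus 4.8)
The plan is to run a virial-type computation in the renormalized time $s$ from \eqref{2.28}, in the spirit of \cite{martel2001instability}. The algebraic input is the identity $L\Lambda Q=-2Q$, where $L$ is the operator in \eqref{2.31} and $\Lambda Q:=\tfrac{Q}{2}+yQ_y$ (a direct computation from \eqref{1.7}, coming from differentiating the one-parameter family $\lambda^{1/2}Q(\lambda\cdot)$). Let $\sigma$ be the bounded primitive $\sigma(y):=\int_{-\infty}^{y}\Lambda Q(z)\,\dd z$, so that $\sigma_y=\Lambda Q$ and all derivatives of $\sigma$ decay exponentially; note also that $(Q_y,\sigma)=-(Q,\Lambda Q)=0$. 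The candidate functional is
\[
\mathcal{F}(s):=\lambda(s)^{1/2}\int_{\mathbb{R}}\sigma(y)\,\epsilon(s,y)\,\dd y .
\]

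First I would compute $\mathcal{F}_s$ using the equation \eqref{2.30} for $\epsilon_s$ together with several integrations by parts in $y$. The term $(L\epsilon)_y$ produces the main contribution, $((L\epsilon)_y,\sigma)=-(L\epsilon,\Lambda Q)=-(\epsilon,L\Lambda Q)=2(\epsilon,Q)$; the term $\tfrac{\lambda_s}{\lambda}\Lambda Q$ gives $\tfrac{\lambda_s}{\lambda}(\Lambda Q,\sigma)$ (a universal constant times $\tfrac{\lambda_s}{\lambda}$); the term $(\tfrac{x_s}{\lambda}-1)Q_y$ drops out since $(Q_y,\sigma)=0$; the nonlinearity gives $-((R(\epsilon))_y,\sigma)=(R(\epsilon),\Lambda Q)$; and the two remaining transport terms are treated via $(\epsilon_y,\sigma)=-(\epsilon,\Lambda Q)$ and $(\tfrac{\epsilon}{2}+y\epsilon_y,\sigma)=-\tfrac12(\epsilon,\sigma)-(\epsilon,y\Lambda Q)$. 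The decisive point is that the orthogonality condition \eqref{2.7} makes $(\epsilon,y\Lambda Q)=0$, so that the contribution $\tfrac{\lambda_s}{\lambda}(\tfrac{\epsilon}{2}+y\epsilon_y,\sigma)=-\tfrac12\tfrac{\lambda_s}{\lambda}(\epsilon,\sigma)$ exactly cancels the product-rule term $\partial_s(\lambda^{1/2})(\epsilon,\sigma)=\tfrac12\lambda^{1/2}\tfrac{\lambda_s}{\lambda}(\epsilon,\sigma)$. One is left with
\[
\mathcal{F}_s=2\lambda^{1/2}(\epsilon,Q)+(\Lambda Q,\sigma)\,\lambda^{1/2}\tfrac{\lambda_s}{\lambda}-\lambda^{1/2}\big(\tfrac{x_s}{\lambda}-1\big)(\epsilon,\Lambda Q)+\lambda^{1/2}(R(\epsilon),\Lambda Q).
\]

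Integrating over $[0,T]$, solving for $\int_0^T\lambda^{1/2}(\epsilon,Q)\,\dd s$, and using $\int_0^T\lambda^{1/2}\tfrac{\lambda_s}{\lambda}\,\dd s=[2\lambda^{1/2}]_0^T$, it then remains to estimate four pieces. The boundary term $\tfrac12|\mathcal F(T)-\mathcal F(0)|$ and the term $|(\Lambda Q,\sigma)|\,|[2\lambda^{1/2}]_0^T|$ are both $\lesssim C(u)$, using $\lambda\le 1$, $\vn{\epsilon}_{L^2}\lesssim\delta$, boundedness of $\sigma$, and the exponential decay of $u$ away from $x(t)$ from Lemmas \ref{l4.1}–\ref{l4.2} (transferred to $\epsilon$). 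For the last two integrals, Corollary \ref{c2.4} gives $|(\tfrac{x_s}{\lambda}-1)(\epsilon,\Lambda Q)|\lesssim \vn{\epsilon}_{L^2}^2(1+\vn{\epsilon}_{L^8}^4)$, and since $\Lambda Q$ is bounded with exponential decay and $R(\epsilon)=O(Q^3\epsilon^2)+O(|\epsilon|^3)+\dots+O(|\epsilon|^5)$, one gets $|(R(\epsilon),\Lambda Q)|\lesssim \vn{\epsilon}_{L^2}^2+\vn{\epsilon}_{L^8}^3+\dots$; multiplying by $\lambda^{1/2}$, summing over the unit $s$-intervals of \eqref{2.43} and absorbing the $L^8$ factors by the local bounds \eqref{2.41.1}–\eqref{2.43}, both integrals are $\lesssim \int_0^T\lambda^{1/2}\vn{\epsilon}_{L^2}^2\,\dd s$. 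Collecting everything yields \eqref{6.1}.

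The main obstacle is showing $\mathcal F$ is bounded uniformly in time: since $\sigma$ is bounded but not in $L^2$, one must control $(\epsilon(s),\sigma)$, and $\vn{\epsilon(s)}_{L^1}$ can grow like $\lambda(s)^{-1/2}$ when $N(t)$ is large, so this forces the use of the exponential-decay estimates of Section 5 (hence this step must come after them) and explains why the weight $\lambda^{1/2}$ is built into the statement — it is exactly what turns the otherwise unbounded term $(\Lambda Q,\sigma)\int_0^T\tfrac{\lambda_s}{\lambda}\,\dd s=(\Lambda Q,\sigma)[\log\lambda]_0^T$ into the bounded $(\Lambda Q,\sigma)[2\lambda^{1/2}]_0^T$, and likewise tames $(\epsilon,\sigma)$. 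A secondary, purely technical, obstacle is that \eqref{2.30} involves $\epsilon_{yy}$, so all the integrations by parts must be justified; this is done either by working with the weak form of the equations as in \cite{martel2002stability} and approximating $\sigma$ by compactly supported functions, or by invoking the local smoothing for gKdV so that $\epsilon(s)$ is smooth with exponential decay on the interior of the lifespan.
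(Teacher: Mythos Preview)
Your proposal is correct and follows essentially the same route as the paper: the paper uses the functional $J(s)=\lambda(s)^{1/2}(\epsilon,\sigma)-\lambda(s)^{1/2}\kappa$ with $\kappa=(\Lambda Q,\sigma)$, which differs from your $\mathcal F$ only by the subtraction of $\lambda^{1/2}\kappa$; this is exactly your observation that $(\Lambda Q,\sigma)\int_0^T\lambda^{1/2}\tfrac{\lambda_s}{\lambda}\,\dd s=[2\lambda^{1/2}(\Lambda Q,\sigma)]_0^T$ is a bounded boundary term, just packaged into the functional rather than handled after integration. All the remaining ingredients---the identity $L\Lambda Q=-2Q$, the cancellation of $\tfrac{\lambda_s}{\lambda}(\tfrac{\epsilon}{2}+y\epsilon_y,\sigma)$ against the product-rule term via the orthogonality $(\epsilon,y\Lambda Q)=0$, the use of Lemmas \ref{l4.1}--\ref{l4.2} for boundedness of the functional, and the absorption of the $L^8$ factors via \eqref{2.41.1}---match the paper line for line.
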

\begin{proof}
 Define the quantity,
\begin{equation}\label{6.2}
J(s) = \lambda(s)^{1/2} \int \epsilon(s, x) \int_{-\infty}^{x} (\frac{Q}{2} + z Q_{z}) dz dx - \lambda(s)^{1/2} \kappa,
\end{equation}
where $\kappa = \frac{1}{4} (\int Q)^{2}$. By rescaling, Lemmas $\ref{l4.1}$ and $\ref{l4.2}$, and the fact that $\lambda(s) \leq 1$,
\begin{equation}\label{6.3}
\sup_{s \in \mathbb{R}} J(s) < \infty.
\end{equation}
Then compute
\begin{equation}\label{6.4}
\frac{d}{ds} J(s) = \lambda(s)^{1/2} \int \epsilon_{s}(s, x) \int_{-\infty}^{x} (\frac{Q}{2} + z Q_{z}) dz dx + \frac{\lambda_{s}}{2 \lambda^{1/2}} \int \epsilon(s, x) \int_{-\infty}^{x} (\frac{Q}{2} + z Q_{z}) dz dx - \frac{\lambda_{s}}{2 \lambda^{1/2}} \kappa.
\end{equation}
Then taking the expression of $\epsilon_{s}$ given by $(\ref{2.30})$, and integrating by parts,
\begin{equation}\label{6.5}
-\int R(\epsilon)_{y} \int_{-\infty}^{y} (\frac{Q}{2} + z Q_{z}) dz dy = \int R(\epsilon) (\frac{Q}{2} + y Q_{y}) dy \lesssim \| \epsilon \|_{L^{2}}^{2} + \| \epsilon \|_{L^{2}} \| \epsilon \|_{L^{8}}^{4}.
\end{equation}
Next, integrating by parts, by $(\ref{2.35})$,
\begin{equation}\label{6.7}
(\frac{x_{s}}{\lambda} - 1) \int \epsilon_{y} \int_{-\infty}^{y} \frac{Q}{2} + z Q_{z} dz dy = -(\frac{x_{s}}{\lambda} - 1) \int \epsilon (\frac{Q}{2} + y Q_{y}) dy \lesssim \| \epsilon \|_{L^{2}}^{2} + \| \epsilon \|_{L^{2}}^{2} \| \epsilon \|_{L^{8}}^{4}.
\end{equation}
Next, integrating by parts and using $\epsilon \perp y(\frac{Q}{2} + y Q_{y})$,
\begin{equation}\label{6.8}
\aligned
\frac{\lambda_{s}}{\lambda} \int (\frac{\epsilon}{2} + y \epsilon_{y}) \int_{-\infty}^{y} (\frac{Q}{2} + z Q_{z}) dz dy = -\frac{1}{2} \frac{\lambda_{s}}{\lambda} \int \epsilon(s, y) \int_{-\infty}^{y} (\frac{Q}{2} + z Q_{z}) dz dy \\
- \frac{\lambda_{s}}{\lambda} \int \epsilon(s,y) y (\frac{Q}{2} + y Q_{y}) dy = -\frac{1}{2} \frac{\lambda_{s}}{\lambda} \int \epsilon(s, y) \int_{-\infty}^{y} (\frac{Q}{2} + z Q_{z}) dz dy.
\endaligned
\end{equation}
By direct calculation,
\begin{equation}\label{6.9}
(\frac{x_{s}}{\lambda} - 1) \int Q_{y} \int_{-\infty}^{y} \frac{Q}{2} + z Q_{z} dz dx = -(\frac{x_{s}}{\lambda} - 1) \int Q(\frac{Q}{2} + y Q_{y}) = 0.
\end{equation}
Also, since $Q$ is an even function,
\begin{equation}\label{6.10}
\frac{\lambda_{s}}{\lambda} \int (\frac{Q}{2} + y Q_{y}) \int_{-\infty}^{y} \frac{Q}{2} + z Q_{z} dz dx = \frac{\lambda_{s}}{\lambda} \frac{1}{2} (\int \frac{Q}{2} + y Q_{y} dy)^{2} = \frac{\lambda_{s}}{\lambda} \kappa.
\end{equation}
Finally, since $L$ is a self-adjoint operator,
\begin{equation}\label{6.11}
\int (L \epsilon)_{y} \int_{-\infty}^{y} \frac{Q}{2} + z Q_{z} dz = -\int (L \epsilon) (\frac{Q}{2} + y Q_{y}) dy = -\int \epsilon \cdot L(\frac{Q}{2} + y Q_{y}) dy.
\end{equation}

Now, by direct computation,
\begin{equation}\label{6.12}
\aligned
L(\frac{Q}{2} + x Q_{x}) = -\frac{Q_{xx}}{2} + \frac{Q}{2} - \frac{5}{2} Q^{5} - x Q_{xxx} - 2 Q_{xx} - 5 x Q^{4} Q_{x} + x Q_{x} \\ = x \partial_{x}(-Q_{xx} - Q^{5} + Q) - \frac{5}{2} (Q_{xx} + Q^{5}) + \frac{Q}{2} = -2 Q.
\endaligned
\end{equation}
Plugging this into $(\ref{6.11})$,
\begin{equation}\label{6.13}
(\ref{6.11}) = 2 \int Q \epsilon.
\end{equation}
Therefore, we have proved,
\begin{equation}\label{6.14}
\frac{d}{ds} J(s) = 2 \lambda(s)^{1/2} \int Q(y) \epsilon(s, y) dy + O(\lambda(s) \| \epsilon \|_{L^{2}}^{2}) + O(\lambda(s) \| \epsilon \|_{L^{2}} \| \epsilon \|_{L^{8}}^{4}).
\end{equation}
Using $(\ref{2.41})$ to estimate $\| \epsilon \|_{L_{s,y}^{8}}$ proves the theorem.
\end{proof}


We are now ready to finish the proof of the main result. 

\begin{proof}[{\bf Proof of Theorem $\ref{t1.1}$} ] 
Theorem $\ref{t1.1}$ may now be proved using a second virial identity. Let
\begin{equation}\label{5.1}
M(s) = \frac{1}{2} \lambda(s) \int y \epsilon(s, y)^{2} dy.
\end{equation}
Lemmas $\ref{l4.1}$ and $\ref{l4.2}$ imply that $(\ref{5.1})$ is uniformly bounded for all $s \in \mathbb{R}$.

Now, by the product rule,
\begin{equation}\label{2.48}
\frac{d}{ds} M(s) = \lambda(s) \int y \epsilon(s, y) \epsilon_{s}(s, y) dy + \frac{1}{2} \lambda_{s}(s) \int y \epsilon(s,y)^{2} dy.
\end{equation}
Again use $(\ref{2.30})$ to compute $\epsilon_{s}$. Integrating by parts,
\begin{equation}\label{2.49}
\aligned
\int y \epsilon (L \epsilon)_{y} dy = \int y \epsilon (-\epsilon_{yyy} + \epsilon_{y} - 20 Q^{3} Q_{y} \epsilon - 5 Q^{4} \epsilon_{y}) dy \\ = -\frac{3}{2} \int \epsilon_{y}^{2} dy - \frac{1}{2} \int \epsilon^{2} dy - 10 \int Q^{3} Q_{y} y \epsilon^{2} dy - \frac{5}{2} \int Q^{4} \epsilon^{2} dy =: H(\epsilon, \epsilon).
\endaligned
\end{equation}
Next, since $\epsilon \perp y Q_{y}$ and $\epsilon \perp y(\frac{Q}{2} + y Q_{y})$ for all $s \in \mathbb{R}$,
\begin{equation}\label{6.20}
\frac{\lambda_{s}}{\lambda} \int y \epsilon (\frac{Q}{2} + y Q_{y}) dy = (\frac{x_{s}}{\lambda} - 1) \int y \epsilon Q_{y} dy = 0.
\end{equation}
Next, integrating by parts and using $(\ref{2.35})$,
\begin{equation}\label{2.51}
(\frac{x_{s}}{\lambda} - 1) \int y \epsilon \epsilon_{y} = -(\frac{x_{s}}{\lambda} - 1) \int \epsilon^{2} dy \lesssim \| \epsilon \|_{L^{2}}^{3} (1 + \| \epsilon \|_{L^{8}}^{4}) \lesssim \| \epsilon \|_{L^{2}}^{3} + \| \epsilon \|_{L^{2}}^{11/2} \| \epsilon_{y} \|_{L^{2}}^{3/2}.
\end{equation}
Also,
\begin{equation}\label{2.52}
\aligned
-\int R(\epsilon)_{y} \epsilon(s,y) y dy = -\int y \epsilon (10 Q^{3} \epsilon^{2} + 10 Q^{2} \epsilon^{3} + 5 Q \epsilon^{4} + \epsilon^{5})_{y} dy = \frac{20}{3} \int Q^{3} \epsilon^{3} - 10 \int Q^{2} Q_{y} y \epsilon^{3} \\ -5 \int Q Q_{y} y \epsilon^{4} + \frac{15}{2} \int Q^{2} \epsilon^{4} + 4 \int Q \epsilon^{5} - \int Q_{y} y \epsilon^{5} +  \frac{5}{6} \epsilon^{6} \\ \lesssim \| \epsilon \|_{L^{2}}^{3/2} \| \epsilon \|_{L^{6}}^{3/2} + \| \epsilon \|_{L^{6}}^{6} \lesssim \| \epsilon \|_{L^{2}}^{5/2} \| \epsilon_{y} \|_{L^{2}}^{1/2} + \| \epsilon \|_{L^{2}}^{4} \| \epsilon_{y} \|_{L^{2}}^{2}.
\endaligned
\end{equation}
Finally, integrating by parts,
\begin{equation}\label{2.53}
 \frac{\lambda_{s}}{\lambda} \int y \epsilon (\frac{\epsilon}{2} + y \epsilon_{y}) =  -\frac{\lambda_{s}}{2 \lambda} \int y \epsilon^{2} = -\frac{\lambda_{s}}{ \lambda} M(s).
 \end{equation}
 Multiplying $(\ref{2.49})$--$(\ref{2.53})$ by $\lambda(s)$ and plugging in to $(\ref{2.48})$,
\begin{equation}\label{6.24}
\aligned
\int_{0}^{T} \lambda(s) H(\epsilon, \epsilon) ds \lesssim C(u) + \int_{0}^{T} \lambda(s) \| \epsilon \|_{L^{2}}^{3} + \lambda(s) \| \epsilon \|_{L^{2}}^{11/2} \| \epsilon_{y} \|_{L^{2}}^{3/2} ds \\ + \int_{0}^{T} \lambda(s) \| \epsilon \|_{L^{2}}^{5/2} \| \epsilon_{y} \|_{L^{2}}^{1/2} + \lambda(s) \| \epsilon \|_{L^{2}}^{4} \| \epsilon_{y} \|_{L^{2}}^{2} ds \lesssim C(u) + \delta \int_{0}^{T} \lambda(s) \| \epsilon \|_{L^{2}}^{2} ds + \delta \int_{0}^{T} \lambda(s) \| \epsilon_{y} \|_{L^{2}}^{2} ds.
\endaligned
\end{equation}
 The last inequality follows from $(\ref{re1.8})$.\medskip

Now then, take
\begin{equation}\label{6.25}
\epsilon_{1} =  \epsilon - \frac{(\epsilon, Q)}{\| Q \|_{L^{2}}^{2}} Q = \epsilon - a Q.
\end{equation}
Since $Q \perp x(\frac{Q}{2} + x Q_{x})$, $\epsilon_{1} \perp Q$ and $\epsilon_{1} \perp x(\frac{Q}{2} + x Q_{x})$. Therefore, from \cite{martel2000liouville}, there exists some $\delta_{1} > 0$ such that
\begin{equation}\label{6.26}
H(\epsilon_{1}, \epsilon_{1}) \geq \delta_{1} \| \epsilon_{1} \|_{H^{1}}^{2}.
\end{equation}
Also, integrating by parts,
\begin{equation}\label{6.27}
2 \lambda(s) H(\epsilon_{1}, aQ) + \lambda(s) H(aQ, aQ) \lesssim \lambda(s)^{1/2} |a| \cdot \lambda(s)^{1/2} \| \epsilon_{1} \|_{L^{2}} + \lambda(s) a^{2}.
\end{equation}
Therefore, $(\ref{6.24})$ and $(\ref{6.25})$ imply
\begin{equation}\label{6.27.1}
\aligned
\delta_{1} \int_{0}^{T} \lambda(s) \| \epsilon_{1} \|_{H^{1}}^{2} ds \lesssim C(u) + \delta \int_{0}^{T} \lambda(s) \| \epsilon \|_{H^{1}}^{2} ds + \int_{0}^{T} \lambda(s) a(s)^{2} ds + \int_{0}^{T} \lambda(s) a(s) \| \epsilon_{1} \|_{L^{2}} ds \\
 \lesssim C(u) + \delta \int_{0}^{T} \lambda(s) \| \epsilon_{1} \|_{H^{1}}^{2} ds + \int_{0}^{T} \lambda(s) a(s)^{2} ds + \int_{0}^{T} \lambda(s) a(s) \| \epsilon_{1} \|_{L^{2}} ds.
\endaligned
\end{equation}
Furthermore, for $\delta \ll \delta_{1}$, absorbing $ \delta \int_{0}^{T} \lambda(s) \| \epsilon_{1} \|_{H^{1}}^{2} ds$ into the left hand side,
\begin{equation}\label{6.27.2}
\aligned
\frac{\delta_{1}}{2} \int_{0}^{T} \lambda(s) \| \epsilon_{1} \|_{H^{1}}^{2} ds \lesssim C(u)  + \int_{0}^{T} \lambda(s) a(s)^{2} ds + \int_{0}^{T} \lambda(s) a(s) \| \epsilon_{1} \|_{L^{2}} ds.
\endaligned
\end{equation}
Also, by the Cauchy-Schwarz inequality,
\begin{equation}\label{6.28}
\frac{\delta_{1}}{4} \int_{0}^{T} \lambda(s) \| \epsilon_{1} \|_{H^{1}}^{2} ds \lesssim C(u) + \frac{1}{\delta_{1}} \int_{0}^{T} \lambda(s) (\epsilon, Q)^{2} ds.
\end{equation}
Also, since
\begin{equation}\label{6.29}
\| \epsilon \|_{H^{1}}^{2} \lesssim \| \epsilon_{1} \|_{H^{1}}^{2} + (\epsilon, Q) \| Q \|_{H^{1}}^{2},
\end{equation}
\begin{equation}\label{6.30}
\frac{\delta_{1}}{4} \int_{0}^{T} \lambda(s) \| \epsilon \|_{H^{1}}^{2} ds \lesssim C(u) + \frac{1}{\delta_{1}} \int_{0}^{T} \lambda(s) (\epsilon, Q)^{2} ds.
\end{equation}

Next, by conservation of mass and scaling invariance of the $L^{2}$ norm,
\begin{equation}\label{6.31}
\frac{1}{2} \| u_{0} \|_{L^{2}}^{2} = \frac{1}{2} \| Q + \epsilon \|_{L^{2}}^{2} = \frac{1}{2} \| Q \|_{L^{2}}^{2} + (\epsilon, Q) + \frac{1}{2} \| \epsilon \|_{L^{2}}^{2},
\end{equation}
and therefore, after doing some algebra,
\begin{equation}\label{6.32}
-(\epsilon, Q) = \frac{1}{2} \| Q \|_{L^{2}}^{2} - \frac{1}{2} \| u_{0} \|_{L^{2}}^{2} + \frac{1}{2} \| \epsilon \|_{L^{2}}^{2}.
\end{equation}
Since $\frac{1}{2} \| Q \|_{L^{2}}^{2} - \frac{1}{2} \| u_{0} \|_{L^{2}}^{2} > 0$ is a conserved quantity, it is convenient to label this quantity
\begin{equation}\label{6.33}
M = \frac{1}{2} \| Q \|_{L^{2}}^{2} - \frac{1}{2} \| u_{0} \|_{L^{2}}^{2}.
\end{equation}
Plugging $(\ref{6.32})$ into the right hand side of $(\ref{6.30})$,
\begin{equation}\label{6.34}
\frac{\delta_{1}}{4} \int_{0}^{T} \lambda(s) \| \epsilon \|_{H^{1}}^{2} ds \lesssim C(u) + \frac{M^{2}}{\delta_{1}} \int_{0}^{T} \lambda(s) ds + \frac{1}{\delta_{1}} \int_{0}^{T} \lambda(s) \| \epsilon \|_{L^{2}}^{4}.
\end{equation}
Since $\| \epsilon \|_{L^{2}} \lesssim \delta$, the second term in the right hand side may be absorbed into the left hand side, so
\begin{equation}\label{6.35}
\frac{\delta_{1}}{8} \int_{0}^{T} \lambda(s) \| \epsilon \|_{H^{1}}^{2} \lesssim C(u) + \frac{M^{2}}{\delta_{1}} \int_{0}^{T} \lambda(s) ds.
\end{equation}

Likewise, by Theorem $\ref{t6.1}$ and $(\ref{6.32})$,
\begin{equation}\label{6.36}
M \int_{0}^{T} \lambda(s)^{1/2} ds \lesssim C(u) + \int_{0}^{T} \lambda(s)^{1/2} \| \epsilon \|_{L^{2}}^{2} ds.
\end{equation}
Letting
\begin{equation}\label{6.37}
K = \int_{0}^{T} \lambda(s) ds, \qquad \text{and} \qquad R = \int_{0}^{T} \lambda(s)^{1/2} ds,
\end{equation}
combining $(\ref{6.35})$ and $(\ref{6.36})$,
\begin{equation}\label{6.38}
\frac{\delta_{1}}{8} \int_{0}^{T} \lambda(s) \| \epsilon \|_{H^{1}}^{2} ds \lesssim \frac{M K}{R \delta_{1}} \int_{0}^{T} \lambda(s)^{1/2} \| \epsilon \|_{L^{2}}^{2} ds + C(u) + \frac{MK}{R \delta_{1}} C(u).
\end{equation}

If it were the case that $\lambda(s) = 1$ for all $s \in \mathbb{R}$, (as in \cite{martel2001instability}), the proof would be complete, since in that case, $K = R = T$ and $M \lesssim \| \epsilon \|_{L^{2}} \leq \delta$, so for $\delta > 0$ sufficiently small, $(\ref{6.38})$ along with the fact that
\begin{equation}\label{6.39}
\lim_{T \nearrow \infty} \int_{0}^{T} \lambda(s) ds = \lim_{T \nearrow \infty} \int_{0}^{T} \lambda(s)^{1/2} ds = \infty,
\end{equation}
would imply that there exists a sequence $s_{n} \rightarrow +\infty$ such that
\begin{equation}\label{6.39.1}
\| \epsilon(s_{n}) \|_{H^{1}} \rightarrow 0,
\end{equation}
as $n \rightarrow \infty$. However, this would contradict the fact that $\| u_{0} \|_{L^{2}} < \| Q \|_{L^{2}}$.\medskip

In the general case, the proof will make use of the fact that $\lambda(s) \leq 1$ for all $s \in \mathbb{R}$ along with the fact that conservation of energy gives a lower bound (depending on $M$) on $\lambda(s)$.\medskip

Expanding out the energy,
\begin{equation}\label{6.40}
\aligned
E(Q + \epsilon) = \frac{1}{2} \int Q_{x}^{2} + \int Q_{x} \epsilon_{x} + \frac{1}{2} \int \epsilon_{x}^{2} \\ - \frac{1}{6} \int Q^{2} - \int Q^{5} \epsilon - \frac{5}{2} \int Q^{4} \epsilon^{2} - \frac{10}{3} \int Q^{3} \epsilon^{3} - \frac{5}{2} \int Q^{2} \epsilon^{4} - \int Q \epsilon^{5} - \frac{1}{6} \int \epsilon^{6}.
\endaligned
\end{equation}
First, note that
\begin{equation}\label{6.41}
E(Q) = \frac{1}{2} \int Q_{x}^{2} - \frac{1}{6} \int Q^{6} = 0.
\end{equation}
Next, integrating by parts, by $(\ref{6.32})$,
\begin{equation}\label{6.42}
\int Q_{x} \epsilon_{x} - \int Q^{5} \epsilon = -\int \epsilon(Q_{xx} + Q^{5}) = -\int \epsilon Q = M + \frac{1}{2} \int \epsilon^{2}.
\end{equation}
Therefore, by H{\"o}lder's inequality and the Sobolev embedding theorem,
\begin{equation}\label{6.43}
E(Q + \epsilon) = M + \frac{1}{2} \int \epsilon_{x}^{2} + \frac{1}{2} \int \epsilon^{2} - \frac{5}{2} \int Q^{4} \epsilon^{2} + O(\| \epsilon \|_{L^{2}}^{5/2} \| \epsilon \|_{H^{1}}^{1/2} + \| \epsilon \|_{L^{2}}^{4} \| \epsilon \|_{H^{1}}^{2}).
\end{equation}
Also, scaling symmetry implies
\begin{equation}\label{6.44}
E(Q + \epsilon) = \lambda(s)^{2} E_{0}.
\end{equation}

Recalling $(\ref{6.25})$ and $(\ref{6.26})$,
\begin{equation}\label{6.45}
\aligned
\frac{1}{2} \int \epsilon_{x}^{2} + \frac{1}{2} \int \epsilon^{2} - \frac{5}{2} \int Q^{4} \epsilon^{2} \geq \delta_{1} \| \epsilon_{1} \|_{H^{1}}^{2} - \frac{1}{\delta_{1}} (\epsilon, Q)^{2} \geq \delta_{1} \| \epsilon \|_{H^{1}}^{2} - \frac{2}{\delta_{1}} (\epsilon, Q)^{2} \\
\geq \delta_{1} \| \epsilon \|_{H^{1}}^{2} - \frac{2}{\delta_{1}} M^{2} - \frac{2}{\delta_{1}} \| \epsilon \|_{L^{2}}^{4} \geq \frac{\delta_{1}}{2} \| \epsilon \|_{H^{1}}^{2} - O(\frac{M^{2}}{\delta_{1}}).
\endaligned
\end{equation}
Since $M \lesssim \delta$ and $\| \epsilon \|_{L^{2}} \lesssim \delta$, for $\delta > 0$ sufficiently small,
\begin{equation}\label{6.46}
\lambda(s)^{2} E_{0} \geq \frac{\delta_{1}}{4} \| \epsilon \|_{H^{1}}^{2} + \frac{M}{2}.
\end{equation}

Since $E_{0}$ and both of the terms on the right hand side are positive, $(\ref{6.46})$ implies
\begin{equation}\label{6.47}
M \lesssim \lambda(s)^{2} E_{0},
\end{equation}
and therefore,
\begin{equation}\label{6.48}
\frac{M}{E_{0}} \lesssim \lambda(s)^{2}, \qquad \text{which implies} \qquad \lambda(s)^{-1/2} \lesssim (\frac{E_{0}}{M})^{1/4}.
\end{equation}

Plugging this into $(\ref{6.38})$,
\begin{equation}\label{6.49}
\frac{\delta_{1}}{8} \int_{0}^{T} \lambda(s) \| \epsilon \|_{H^{1}}^{2} ds \lesssim \frac{M^{3/4} E_{0}^{1/4} K}{R} \int_{0}^{T} \lambda(s) \| \epsilon \|_{L^{2}}^{2} ds + \frac{M K}{R \delta_{1}} C(u) + C(u).
\end{equation}
Since $\lambda(s) \leq 1$, $K \leq R$, so
\begin{equation}\label{6.50}
\frac{\delta_{1}}{8} \int_{0}^{T} \lambda(s) \| \epsilon \|_{H^{1}}^{2} ds \lesssim M^{3/4} E_{0}^{1/4} \int_{0}^{T} \lambda(s) \| \epsilon \|_{L^{2}}^{2} ds + C(u).
\end{equation}
Assuming for a moment that $E_{0} \lesssim 1$, $M \lesssim \delta$ and $(\ref{6.39})$ imply that $(\ref{6.39.1})$ must hold in this case as well, obtaining a contradiction.\medskip

The fact that $E_{0} \lesssim 1$ is a straightforward consequence of Lemmas $\ref{l4.1}$ and $\ref{l4.2}$. Suppose without loss of generality that
\begin{equation}\label{6.51}
\lambda(0) \geq \frac{1}{2} = \frac{1}{2} \sup_{s \in \mathbb{R}} \lambda(s).
\end{equation}
Lemmas $\ref{l4.1}$ and $\ref{l4.2}$ imply that
\begin{equation}\label{6.52}
\lambda(s) \int y \epsilon(s, y)^{2} dy \lesssim 1,
\end{equation}
with implicit constant independent of $u$, so long as $u$ satisfies $(\ref{re1.8})$. Then by $(\ref{6.30})$,
\begin{equation}\label{6.53}
\int_{0}^{1} \lambda(s) \| \epsilon \|_{H^{1}}^{2} ds \lesssim 1 + \frac{1}{\delta_{1}} \int_{0}^{1} \lambda(s) \| \epsilon \|_{L^{2}}^{2} ds.
\end{equation}
Since $(\ref{2.39})$ guarantees that $\lambda(s) \sim 1$ on $[0, 1]$,
\begin{equation}\label{6.54}
\int_{0}^{1} \| \epsilon \|_{H^{1}}^{2} ds \lesssim 1 + \frac{1}{\delta_{1}} \int_{0}^{1} \| \epsilon \|_{L^{2}}^{2} ds \lesssim 1.
\end{equation}
The last inequality follows from $(\ref{re1.8})$. Therefore, the proof that $E_{0} \lesssim 1$ is complete.
\end{proof}

\nocite{*}
\bibliography{biblio}
\bibliographystyle{abbrv}

\end{document}